\documentclass[12pt]{amsart}
\usepackage{dsfont}
\usepackage{mathrsfs} 
\usepackage{amssymb, changes}
\usepackage{amsmath}
\usepackage{amsfonts}
\usepackage{bbm}
\usepackage{amsthm}
\usepackage{amsbsy}
\usepackage{amsgen}
\usepackage{amscd}
\usepackage{amsopn}
\usepackage{amstext}
\usepackage{amsxtra}
\usepackage[english]{babel}
\usepackage[most]{tcolorbox}
\usepackage{xcolor}
\usepackage{times}
\usepackage[foot]{amsaddr}
\usepackage{hyperref}\hypersetup{
    colorlinks=true,
    linkcolor=purple,
    filecolor=blue,      
    urlcolor=cyan,
    }
   
\usepackage[margin=40pt]{geometry}

\definecolor{lime}{HTML}{A6CE39}
\DeclareRobustCommand{\orcidicon}{%
	\begin{tikzpicture}
	\draw[lime, fill=lime] (0,0) 
	circle [radius=0.16] 
	node[white] {{\fontfamily{qag}\selectfont \tiny ID}};
	\draw[white, fill=white] (-0.0625,0.095) 
	circle [radius=0.007];
	\end{tikzpicture}
	\hspace{-2mm}
}
\foreach \x in {A, ..., Z}{%
	\expandafter\xdef\csname orcid\x\endcsname{\noexpand\href{https://orcid.org/\csname orcidauthor\x\endcsname}{\noexpand\orcidicon}}
}

\newtheorem{theorem}{Theorem}[section]
\newtheorem{assumption}{Assumption}
\newtheorem{lemma}[theorem]{Lemma}
\newtheorem{corollary}[theorem]{Corollary}
\newtheorem{proposition}[theorem]{Proposition}

\newtheorem{remark}{Remark}

\tcolorboxenvironment{theorem}{
  enhanced,
  breakable,
  colback=gray!3,
  colframe=black!50!black,
  boxrule=0.8pt,
  arc=2mm,
  left=6pt,
  right=6pt,
  top=6pt,
  bottom=6pt,
  before skip=10pt,
  after skip=10pt
}

\tcolorboxenvironment{lemma}{
  enhanced,
  breakable,
  colback=gray!3,
  colframe=black!40!black,
  boxrule=0.8pt,
  arc=2mm,
  left=6pt,
  right=6pt,
  top=6pt,
  bottom=6pt,
  before skip=10pt,
  after skip=10pt
}

\tcolorboxenvironment{proposition}{
  enhanced,
  breakable,
  colback=gray!3,
  colframe=black!50!black,
  boxrule=0.8pt,
  arc=2mm,
  left=6pt,
  right=6pt,
  top=6pt,
  bottom=6pt,
  before skip=10pt,
  after skip=10pt
}

\newcommand{\E}{\mathbb{E}}
\renewcommand{\P}{\mathbb{P}}

\newcommand{\R}{\mathbb{R}}
\newcommand{\Z}{\mathbb{Z}}
\newcommand{\N}{\mathbb{N}}
\def\eps{\varepsilon}
\def\phi{\varphi}
\newcommand{\rmd}{\,\mathrm{d}}

\def\1{\mathds{1}}
\def\1vec{\mathbf{1}}
\def\0{\mathbf{0}}

\renewcommand{\Re}{\operatorname{Re}}
\newcommand{\convd}{\xrightarrow{d}}
\newcommand{\convp}{\xrightarrow{p}}

\DeclareMathOperator{\tr}{tr}

\newcommand{\norm}[1]{\left\| #1 \right\|}

\begin{document}
\title{Limit theorems for random walks with spatio-temporal drift}
\author[]{Ngo P. N. Ngoc\orcidB{}}
\address[N. P. N. N.]{Institute of Research and Development, and  
Faculty of Natural Sciences, Duy Tan University, Da Nang 550000, Vietnam}

\email{ngopnguyenngoc@duytan.edu.vn}
\author[]{Tuan-Minh Nguyen\orcidA{}}
\address[T.-M. N.]{School of Mathematics, Monash University, 9 Rainforest Walk, Clayton 3800, Victoria, Australia}
\email{tuanminh.nguyen@monash.edu}
\begin{abstract}
We study a class of discrete-time random walks in $\mathbb{R}^d$ whose conditional drift decays polynomially in time and grows polynomially with the distance from the origin to the current position. This class is related to several models of self-interacting random processes. We determine the asymptotic behavior of the walk under the assumption that its increments have moments of order $p$ for some $p>2$. In the linear case, where the drift depends linearly on the current position, we establish a phase transition in the convergence in distribution of the normalized process to Gaussian limits. In the nonlinear case, we identify three distinct regimes separated by a critical line and show that the normalized process exhibits qualitatively different behaviors in each regime, including convergence in distribution to a Gaussian law, convergence to a non-Gaussian limit given by the stationary distribution of a stochastic differential equation, and almost sure localization on a hypersphere.
\end{abstract}
\keywords{random walks with spatio-temporal drift, random walks with asymptotically zero drift, multi-dimensional random walks, limit theorems}
\subjclass{60F05, 82C41, 60G42}
\maketitle

\tableofcontents

\section{Introduction}
\subsection{Description of the model and main results}
Let $(X_n)_{n\ge1}$ be a sequence of random variables in $\R^d$ (not necessarily independent nor identically distributed). Let $(S_n)_{n\ge 0}$ be the random walk with increments $(X_n)_{n\ge 1}$, i.e.
$$S_0=0\quad\text{and} \quad S_n:=\sum_{k=1}^n X_k \text{ for $n\ge 1$}.$$
Set $\mathcal{F}_n=\sigma(X_1,X_2,...,X_n).$
Let $\Z_+$ be the set of non-negative integers and $\N=\mathbb{Z}_+\setminus\{0\}$ be the set of natural numbers. Let $\|\cdot\|$ and $\langle\cdot,\cdot\rangle$ be the usual Euclidean norm and scalar product. Assume that there exist a Borel measurable function $\mu: \R^d\times \Z_+\to \R^d$ and a $d\times d$ positive definite matrix $\Sigma$ such that for each $n\ge 0$,
$$\E[X_{n+1}|\mathcal{F}_n]=\mu(S_n,n)\quad \text{and} \quad \E[X_{n+1} X_{n+1}^\top|\mathcal{F}_n]=\Sigma+\mathcal{E}_n,$$
where $\mathcal{E}_n$ is a random $d\times d$ matrix such that it is $\mathcal{F}_n$-measurable and $\lim_{n\to\infty}\E[\|\mathcal{E}_n\|]= 0.$

We call $(S_n)_{n\ge 0}$ a \textbf{random walk with spatio-temporal drift} (RWSD), and denote it by ${\rm RWSD}(\mu,\Sigma)$. In this paper, we focus on the case where the conditional drift $\mu$ decays polynomially in time and grows polynomially with the distance from the origin to the current location of the walk, i.e.
$$\mu(s,n)=\frac{1}{n^{\beta}}\|s\|^{\alpha-1} As \quad\text{for $s\in \R^d$ and $n\ge 1$},$$
where $\alpha,\beta \ge 0$, and $A$ is a $d\times d$ matrix whose eigenvalues have positive real parts. 

Our model extends the one-dimensional random walks on $\mathbb R_+$ with drift $\mu(s,n)=\rho\, s^{\alpha}/n^{\beta}$ studied by Menshikov and Volkov~\cite{MV2008}. In \cite{MV2008}, the authors established criteria for transience and recurrence in the cases $\alpha<2\beta-1$ and $\alpha>2\beta-1$, while the case on the critical line $\alpha=2\beta-1$ remains open.
In the case $\alpha=\beta=1$, the corresponding nearest-neighbor model on $\mathbb Z$ is also known in the literature as the \emph{elephant random walk}, introduced by Sch\"utz and Trimper~\cite{ST2004}. The terminology is motivated by the proverbial memory of an elephant (``an elephant never forgets''): at time $n$, from its current position $s$, the elephant selects one of its previous increments uniformly at random and then either repeats it with probability $p_{s,n}=p$, or takes the opposite increment with probability $1-p$. See also \cite{CGS2017, CGS2017b, B2018, BL2019, B2022, GLRS2025, Q2025} for rigorous results and further studies of the asymptotic behavior of this model and its extension to $\Z^d$. We note that when $p_{s,n}$ depends on $s$ and $n$, the model becomes the $\text{RWSD}(\mu,\sigma^2)$ on $\Z$ with $\mu(s,n)=(2p_{s,n}-1)s/n$  and $\sigma^2=1$.  An important extension of the Sch\"utz--Trimper model to $\mathbb R$, in which the step distribution may be arbitrary, is the \emph{step-reinforced random walk}, introduced by Bertoin~\cite{B2020b,B2021, B2020a, B2024}. The case when $\alpha=-1$ and $\beta=0$, the model is known as the \textit{Lamperti's random walk}, originally investigated by Lamperti \cite{L1960, L1962, L1963} in 1960s. See \cite{MPW2017, DKW2025} for excellent monographs related to this model. A continuous-time diffusion counterpart of the Menshikov--Volkov model was later studied by Gradinaru and Offret~\cite{GO2013,O2014}. It describes a one-dimensional Brownian motion dynamics evolving in a time-dependent potential $V_{\rho,\alpha,\beta}$:
$$
\rmd X_t =  - \frac{1}{2} \partial_x V_{\rho,\alpha,\beta}(t, X_t)\, \mathrm{d} t + \mathrm{d} B_t,
\quad
X_{t_0} = x_0,
$$
where
$$
V_{\rho,\alpha,\beta}(t,x)
:=
-\frac{2\rho}{\alpha+1}\,\frac{|x|^{\alpha+1}}{t^\beta}.
$$

Throughout this paper, we also make the following assumption:
\begin{assumption}\label{assump}
Assume that $\E[X_1]=0$ and
\begin{align*} 
\sup_{n\ge 1} \E[\|X_n\|^{p}]< \infty\quad \text{for some $p>2$.}
\end{align*} 
\end{assumption}

We study limit theorems for ${\rm RWSD}(\mu,\Sigma)$ under Assumption~\ref{assump}. We consider two distinct cases where the drift $\mu(s,n)$ is either linear or nonlinear in the location variable $s$.

\begin{assumption}\label{assump.nonlinear} Fix $0<\alpha\le \beta \le 1$ and consider the potential function $$H(s):=\frac{1}{\alpha+1}\nabla\|s\|^{\alpha+1}=\|s\|^{\alpha-1}s.$$ Let $\Sigma$ be a deterministic $d\times d$ positive definite matrix. Assume that for each $n\ge 1$ and $s\in \R^d$,  $$\E\big[X_{n+1}\mid \mathcal{F}_n\cap\{ S_{n}=s\}\big]=\mu(s,n):=\frac{H(s)}{n^{\beta}}\quad \text{and}\quad \E\big[X_{n+1} X_{n+1}^\top\mid \mathcal{F}_n\big]= \Sigma.$$
\end{assumption}

\begin{theorem}\label{thm.nonlinear} Suppose that Assumption \ref{assump} and Assumption \ref{assump.nonlinear} hold.
\begin{itemize}
    \item[a.] \textbf{Above the critical line}: Assume that $\alpha < 2\beta-1$ and $0<\alpha\le 1$. 
Then $S_n/\sqrt{n}$ converges in distribution to $\mathcal{N}(0, \Sigma)$.
   \item[b.] \textbf{On the critical line}: Assume that $\beta=\frac{\alpha+1}{2}$, $0<\alpha<1$. Then ${S_n}/{\sqrt{n}}$ converges in distribution to the unique stationary distribution of the following SDE \begin{align}\label{eq:sde_critical}
    {\rm d}X_t = \left(-\frac{1}{2}X_t + H(X_t)\right){\rm d} t +\Sigma^{1/2} {\rm d} B_t.
\end{align}
In the one-dimensional case $d=1$, the stationary density is given explicitly by
$$
p(x) = C \exp\Bigl( -\Sigma^{-1}\Bigl( \frac{x^2}{2} - \frac{2}{\alpha+1}|x|^{\alpha+1} \Bigr) \Bigr) \quad \text{for } x\in\mathbb{R},
$$
where $C$ is a normalizing constant.
   \item[c.] \textbf{Under the critical line}: Assume that $0<\alpha\le \beta<1$ and $2\beta-1<\alpha$. 
Let $\gamma:=\frac{1-\beta}{1-\alpha}$ and 
$r:=\gamma^{-1/(1-\alpha)}$.
Then $n^{-\gamma}\|S_n\|\to r$ as $n\to\infty$ almost surely. In other words, $n^{-\gamma} S_n$ eventually localizes on 
the hypersphere $\mathcal C_r:=\{x\in\mathbb R^d:\|x\|=r\}$. 
\end{itemize}
\end{theorem}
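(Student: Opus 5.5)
The common device for all three regimes is the decomposition $S_{n+1}=S_n+n^{-\beta}H(S_n)+\xi_{n+1}$, where $\xi_{n+1}:=X_{n+1}-\mu(S_n,n)$ is a martingale difference. By Assumption~\ref{assump.nonlinear}, its conditional covariance is $\Sigma-\mu\mu^\top$, and Assumption~\ref{assump} gives bounded $p$-th moments. The three regimes differ only in how the drift term $n^{-\beta}H(S_n)$ compares with the diffusive scale $\sqrt n$. To measure this I will use the rescaled process $Y_n:=S_n/\sqrt n$ in parts a and b, and $Z_n:=S_n/n^{\gamma}$ in part c, each viewed as a stochastic-approximation recursion in the logarithmic time $t=\log n$.

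\textbf{Part a.} First I will show the a priori bound $\E\|S_n\|^2=O(n)$. Expanding $\|S_{n+1}\|^2$ gives
$$\E\|S_{n+1}\|^2\le \E\|S_n\|^2+\tr\Sigma+2n^{-\beta}\E\|S_n\|^{\alpha+1}.$$
By Jensen, $\E\|S_n\|^{\alpha+1}\le(\E\|S_n\|^2)^{(\alpha+1)/2}$. Feeding $v_n\lesssim n$ into the right-hand side gives an increment of order $n^{-\beta+(\alpha+1)/2}$, which is $o(1)$ because $\alpha<2\beta-1$. A discrete Gronwall/bootstrap argument then closes the bound. Next, the accumulated drift satisfies
$$\E\Bigl\|\sum_{k<n}k^{-\beta}H(S_k)\Bigr\|\le\sum_{k<n}k^{-\beta}\,O\bigl(k^{\alpha/2}\bigr)=O\bigl(n^{1-\beta+\alpha/2}\bigr)=o(\sqrt n),$$
again because $\alpha<2\beta-1$. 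The martingale part satisfies the martingale CLT: its conditional covariances sum to $n\Sigma+o(n)$, since $\sum\|\mu\|^2$ is of lower order, and the Lindeberg condition follows from the $p>2$ moments. Slutsky's lemma then gives the $\mathcal N(0,\Sigma)$ limit.

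\textbf{Part b.} On the critical line $\beta=(\alpha+1)/2$ one has $n^{-\beta}H(S_n)=n^{-1/2}H(Y_n)$. This gives
$$Y_{n+1}=Y_n+\frac1n\Bigl(-\tfrac12Y_n+H(Y_n)\Bigr)+\frac{\xi_{n+1}}{\sqrt n}+O\bigl(n^{-2}\|S_n\|/\sqrt n\bigr),$$
which is an Euler scheme with step sizes $1/n$ for the SDE \eqref{eq:sde_critical} in time $\log n$. The plan has four steps.
\begin{enumerate}
\item Prove uniform moment bounds $\sup_n\E\|Y_n\|^2<\infty$ using the Lyapunov function $V(y)=\|y\|^2$. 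The generator gives $-\|y\|^2+2\|y\|^{\alpha+1}+\tr\Sigma$, which is dissipative because $\alpha<1$.
\item Show that the piecewise-interpolated process $Y_{n(t)}$ with $t=\log n$, shifted to $[t_0,t_0+T]$, is tight and that its limit points solve the martingale problem for the SDE (Kushner--Yin style, with a martingale CLT for the noise).
\item The limiting SDE has a locally Lipschitz drift off the origin (and Hölder at $0$, with additive nondegenerate noise, so weak uniqueness holds). Together with the Lyapunov bound, this gives a unique invariant law $\pi$ that attracts all initial distributions by Harris/strong Feller arguments.
\item Letting $t_0\to\infty$ and then $T\to\infty$, the law of $Y_n$ converges to $\pi$.
\end{enumerate}
In $d=1$ the drift is the gradient of $-\bigl(x^2/2-\tfrac{2}{\alpha+1}|x|^{\alpha+1}\bigr)$ with constant diffusion $\Sigma$. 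The standard Gibbs formula then yields the stated density, with factor $2\Sigma^{-1}\cdot\tfrac12$. I expect the main obstacle to be the passage from tightness of shifted trajectories to convergence of the one-time marginals to $\pi$, which requires uniform-in-initial-law ergodicity on compacts.

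\textbf{Part c.} Here $Z_{n+1}=Z_n+\frac1n\bigl(-\gamma Z_n+n^{1-\beta-\gamma(1-\alpha)}H(Z_n)\bigr)+n^{-\gamma}\xi_{n+1}+\dots$. The choice of $\gamma$ makes the exponent $1-\beta-\gamma(1-\alpha)$ vanish, so the mean ODE is $\dot z=-\gamma z+\|z\|^{\alpha-1}z$. Radially this reads $\dot\rho=\rho(\rho^{\alpha-1}-\gamma)$, so the sphere $\rho=r$ is attracting and the origin is repelling. Since $\gamma>1/2$, precisely because $\alpha>2\beta-1$, the noise satisfies $\sum n^{-2\gamma}\E\|\xi_n\|^2<\infty$, and the martingale $\sum n^{-\gamma}\xi_{n+1}$ converges almost surely. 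Stochastic-approximation theory (Benaïm's asymptotic-pseudotrajectory results) then puts the limit set of $Z_n$ in the chain-recurrent set $\{0\}\cup\mathcal C_r$. Two things remain to be shown. The first is boundedness of $Z_n$, via the Lyapunov function $\|z\|^2$, whose drift is negative for $\|z\|>r$. The second is non-convergence to the repelling origin. For this I would argue directly on $\|S_n\|$: near $0$ the noise of size $\sqrt n\gg n^{\gamma}\epsilon$ is not applicable, since $\gamma>1/2$, so instead I would use a Pemantle-type nonconvergence argument with the nondegenerate noise $\Sigma$ and the repulsion, or show $\|S_n\|\to\infty$ by the transience result of \cite{MV2008}-type. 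This unstable-point exclusion is the main obstacle for part c. Once it is done, $\|Z_n\|\to r$ almost surely.
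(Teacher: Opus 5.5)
Parts a and b are sound. In part a you replace the paper's almost-sure growth bound $\|S_n\|=O(n^{1/2+\eps})$ (Lemma~\ref{Sb.lem}, via Robbins--Siegmund) by the moment bound $\E\|S_n\|^2=O(n)$. Your bootstrap does close; for $\alpha=1$ it reduces to $\prod_n(1+2n^{-\beta})<\infty$, since then $\beta>1$. It also controls in $L^1$ both the accumulated drift and $\sum_k\E\|\mu(S_k,k)\|^2\le C\sum_k k^{\alpha-2\beta}<\infty$, and that is all convergence in distribution requires. The paper's route additionally gives almost-sure negligibility of the drift, which is more than needed here. Part b follows the paper's scheme. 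The uniformity you flag in step 4 is obtained there by looking back a logarithmic time $T$ from the target index. This gives $\nu=\nu_TP_T$ for every subsequential limit $\nu$, and one then combines the $(1+\|x\|^2)$-weighted exponential ergodicity of Lemma~\ref{sde.unique_solution} with $\sup_n\E\|G_n\|^2<\infty$ (Lemma~\ref{lem:tightness}).

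The genuine gap is in part c: you never exclude $n^{-\gamma}S_n\to0$. You identify this yourself as the main obstacle, and neither suggested route closes it.

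\begin{itemize}
\item Transience is the wrong target. $\|S_n\|\to\infty$ does not by itself exclude $\|S_n\|=o(n^{\gamma})$ (diffusive growth, say). You would still have to rule out convergence to the unstable equilibrium at the $n^\gamma$ scale, where the noise $n^{-\gamma}U_{n+1}$ is square-summable and vanishes. The Menshikov--Volkov results are also one-dimensional.
\item A Pemantle-type theorem is the right kind of tool, but the classical versions need a $C^2$ field with a linearly unstable direction at the equilibrium. Here $F(x)=-\gamma x+\|x\|^{\alpha-1}x$ is not even Lipschitz at $0$.
\item The discrete-time theorem of Raimond--Tarr\`es needs conditional moments of order $>2$, whereas Assumption~\ref{assump} only bounds unconditional $p$-th moments.
\end{itemize}

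The missing ingredient is Proposition~\ref{lem:discrete_antitrap_from_CT}: embed the recursion in continuous time and apply the continuous-time Raimond--Tarr\`es theorem. That theorem needs only three things:
\begin{itemize}
\item the sign condition $\langle x,F(x)\rangle\ge0$ for small $\|x\|$;
\item nondegenerate noise, here $\E[\|U_{n+1}\|^2\mid\mathcal F_n]\to\tr\Sigma>0$ on $\{G_n\to0\}$, using $\gamma\le1$;
\item a pathwise jump envelope $n^{-\gamma}\|U_{n+1}\|\le Cn^{1/2-\gamma}$, of the same order as the residual noise $(\sum_{k>n}k^{-2\gamma})^{1/2}$.
\end{itemize}
The envelope is where $p>2$ enters: Borel--Cantelli gives $\|X_{n+1}\|=o(\sqrt n)$ almost surely.

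There is also a smaller gap before that step, in your appeal to Bena\"{\i}m's asymptotic-pseudotrajectory theorem.
\begin{itemize}
\item The theorem presupposes a flow, but the mean ODE has non-unique solutions leaving $0$. Besides $z\equiv0$, any $z(t)$ along a fixed direction with $\|z(t)\|^{1-\alpha}=\gamma^{-1}(1-e^{-(1-\alpha)\gamma t})$ is a solution.
\item It also presupposes $\sup_n\|Z_n\|<\infty$ almost surely, which you only assert. Up to summable terms, the conditional drift of $\|Z_n\|^2$ is $\frac1n h(\|Z_n\|^2)$ with $h(v)=2v^{(\alpha+1)/2}-2\gamma v$. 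Since $h>0$ on $(0,r^2)$, Robbins--Siegmund does not give boundedness directly.
\end{itemize}
The paper first proves $\|G_n\|=O(n^{\eps})$ almost surely (Lemma~\ref{lem:S.bound}) and localizes on $\{\sigma_{m,\eps}=\infty\}$. It then runs a one-dimensional interval-exit argument on $V_n=\|G_n\|^2$ (Lemmas~\ref{lem:one_sided_exit}--\ref{prop:limit_points} and Proposition~\ref{lem:dist}) to get boundedness and $V_n\to V_\infty\in\{0,r^2\}$. A differential-inclusion version of Bena\"{\i}m's theory would handle the non-uniqueness, but you would still need the almost-sure boundedness.
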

We stress that the assumption $0<\alpha\le \beta\le 1$ is a \textbf{natural} condition under
which the above limit results hold \textbf{universally} under Assumption~\ref{assump}.
Obtaining limit theorems for other ranges of $\alpha$ and $\beta$ would require
additional assumptions, such as stronger moment conditions or more specific
conditions on the distributions of the increments $(X_n)$, and this is not the
aim of the present paper. 

We next consider the linear case, where $\alpha=\beta=1$. For a square matrix $B$, let $\lambda_{\max}(B)$ and $\lambda_{\min}(B)$ denote the largest and smallest real parts of the eigenvalues of $B$, respectively. For the linear case, we assume that Assumption \ref{assump} and the following conditions are fulfilled:
\begin{assumption}\label{assump2} Assume that for each $n\ge 1$ and $s\in \R^d$,
    $$\E\big[X_{n+1}\mid \mathcal{F}_n\cap\{ S_{n}=s\}\big]=\mu(s,n):=\frac{1}{n} A s\quad\text{and}\quad\E\big[X_{n+1} X_{n+1}^\top\mid \mathcal{F}_n\big]= \Sigma + \mathcal{E}_n,$$ 
where $A\in  \R^{d\times d}$ is a matrix such that $\lambda_{\min}(A)>0$ and $\lambda_{\max}(A)<1$, $\Sigma$ is a deterministic $d\times d$ positive definite matrix, and $\mathcal{E}_n$ is a $\mathcal{F}_n$-measurable random matrix such that $\lim_{n\to\infty}\E[\|\mathcal{E}_n\|]=0$. 
 \end{assumption}

 For $n>0$, we define $n^{B}:=\exp(\log(n)B)$. 
Let $I_d$ denote the $d$-dimensional identity matrix.  

\begin{theorem}[Central Limit Theorems for Specific Regimes]\label{thm.1} Suppose that Assumption \ref{assump} and Assumption \ref{assump2} are fulfilled. 
\begin{itemize}
\item[a.] \textbf{Subcritical case:} Assume that $\lambda_{\max}(A)<1/2$. Then $S_n/\sqrt{n}$ converges in distribution to 
$\mathcal{N}(0,\Xi)$ where $\Xi$ is the unique solution to the Lyapunov matrix equation \begin{align}\label{eq.Lypunov}
    \left( \frac{1}{2}I_d-A\right) \Xi + \Xi \left( \frac{1}{2}I_d-A^\top\right) = \Sigma.
\end{align}
\item[b.] \textbf{Super-critical case:} Assume that $\lambda_{\min}(A)>\dfrac{1}{2}$. Then there exists a non-degenerate random vector $W\in\mathbb R^d$ such that
	$n^{-A} S_n$ converges a.s. and in $L^2$ to $W$.
	Furthermore,
     $\big(S_n-n^{A}W\big)/{\sqrt{n}}$ converges in distribution to  $\mathcal{N}(0,-\Xi).$
\item[c.] \textbf{Critical case:} Assume that $A$ has the Jordan form $A=Q(\lambda I_d + N)Q^{-1}$ where ${\rm Re}(\lambda)=1/2$ and $N$ is the nilpotent matrix with ones on the superdiagonal and  zeros elsewhere. Let \begin{align*}
    D_n & = Q\operatorname{diag}\big((\log n)^{d-\frac{1}{2}},\cdots, (\log n)^{\frac{3}{2}}, (\log n)^{\frac{1}{2}}\big)Q^{-1}.
\end{align*} Then 
$D^{-1}_n n^{-A}S_n$ converges in distribution to $\mathcal{N}(0,Q\mathcal{V}Q^*),$
where
$$\mathcal{V}_{i,j}  = \frac{(-1)^{i+j}}{(d-i)!(d-j)!(2d-i-j+1)} (Q^{-1}\Sigma (Q^{-1})^*)_{d,d}.
$$
     \end{itemize}
\end{theorem}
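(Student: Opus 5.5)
The plan is to write the walk as a linear recursion with a martingale forcing and then study the normalized martingale part. Set $\xi_{n+1}:=X_{n+1}-\frac1n AS_n$. Then $(\xi_n)$ is a martingale difference sequence with $\E[\xi_{n+1}\xi_{n+1}^\top\mid\mathcal F_n]=\Sigma+\mathcal E_n-\frac{1}{n^2}AS_nS_n^\top A^\top$, and $S_{n+1}=(I_d+\frac1n A)S_n+\xi_{n+1}$. Define the matrix products
\[
a_n:=\prod_{k=1}^{n-1}\Bigl(I_d+\tfrac1k A\Bigr)\qquad (n\ge2),
\]
which commute with $A$ and satisfy $a_n=n^{A}(C+o(1))$ for an invertible matrix $C$ commuting with $A$. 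This follows from $\log(I+A/k)=A/k+O(k^{-2})$, which gives $a_n\sim \exp(A(\log n+\gamma_E))\cdot$const. Then $M_n:=a_n^{-1}S_n=\sum_{k\le n}a_k^{-1}\xi_k$ is a martingale.

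The first step is an a priori bound $\E\|S_n\|^2=O(n)$ when $\lambda_{\max}(A)<1/2$. In general the bound is $O(\|n^{A}\|^2+n)$, which in the critical case carries logarithmic factors. I would obtain it by computing $\E\|M_n\|^2=\sum_k \E\|a_k^{-1}\xi_k\|^2$ and bounding $\|a_k^{-1}\|\le Ck^{-\lambda_{\min}(A)+\epsilon}$. The correction term $\frac1{n^2}AS_nS_n^\top A^\top$ has expectation $O(1/n)$, so the conditional covariance of $\xi$ is $\Sigma+\tilde{\mathcal E}_n$ with $\E\|\tilde{\mathcal E}_n\|\to0$.

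\textbf{(a) Subcritical case.} Write $S_n/\sqrt n=\sum_{k\le n} n^{-1/2}a_na_k^{-1}\xi_k$, a triangular array of martingale differences. I would apply the martingale CLT (Hall--Heyde) in the following order.
\begin{itemize}
\item[(i)] \emph{Conditional Lindeberg.} This follows from Assumption~\ref{assump}: the $p$-th moments of $\xi_k$ are bounded, and $\|n^{-1/2}a_na_k^{-1}\|\le C n^{-1/2}(n/k)^{\lambda_{\max}+\epsilon}$. The sum $\sum_k n^{-p/2}(n/k)^{p(\lambda_{\max}+\epsilon)}$ is $o(1)$ when $\lambda_{\max}<1/2$ (choose $\epsilon$ small).
\item[(ii)] \emph{Conditional variance.} We need $\frac1n\sum_k a_na_k^{-1}(\Sigma+\tilde{\mathcal E}_k)a_k^{-\top}a_n^\top\to\Xi$ in probability. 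The $\tilde{\mathcal E}$ part is handled by an $L^1$ bound together with the same summable weights. The deterministic part is a Riemann sum converging to $\int_0^1 u^{-A}\Sigma u^{-A^\top}\,du\cdot$ (after the substitution $u=k/n$), namely $\int_0^\infty e^{-t(\frac12 I-A)}\Sigma e^{-t(\frac12 I-A^\top)}dt$. This integral is exactly the solution of the Lyapunov equation \eqref{eq.Lypunov}.
\end{itemize}

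\textbf{(b) Supercritical case.} Here $\sum_k\|a_k^{-1}\|^2<\infty$, so $M_n$ is $L^2$-bounded and converges a.s. and in $L^2$ to $M_\infty$. Set $W:=C M_\infty$; then $n^{-A}S_n=n^{-A}a_nM_n\to W$. For the fluctuations, $S_n-a_nM_\infty=-a_n\sum_{k>n}a_k^{-1}\xi_k$. A tail martingale CLT, by the same Lindeberg and variance computation, now with $\int_1^\infty u^{-A}\Sigma u^{-A^\top}du=-\Xi$, gives $\mathcal N(0,-\Xi)$. The difference between $a_nM_\infty$ and $n^AW$ is $n^{A}o(1)M_\infty$, which must be $o(\sqrt n)$. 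This needs the sharper rate $a_n=n^AC(1+O(1/n))$, and that rate does hold. Non-degeneracy of $W$ follows because $\mathrm{Cov}(M_\infty)\succeq\sum_k a_k^{-1}\Sigma a_k^{-\top}\succ0$ up to $o(1)$ corrections. Here I would control $\E\tilde{\mathcal E}$ by noting that only finitely many early terms matter.

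\textbf{(c) Critical case.} Transform by $Q$ to the Jordan block. Then $n^{-A}a_k a_k^{-1}$ is expressed through $(n/k)^{\lambda}\exp(\log(n/k)N)$, whose entries are $(n/k)^{\lambda}\log^j(n/k)/j!$. The conditional variance of $D_n^{-1}n^{-A}S_n$ is a sum $\sum_k k^{-1}\times$ polynomial in $\log k$. After normalizing by $(\log n)^{2d-i-j+1}$, the dominant contribution comes from the $(d,d)$ entry of $Q^{-1}\Sigma Q^{-*}$ propagated by the top power of $\log$. This gives the stated $\mathcal V$ via $\int_0^1 x^{d-i}x^{d-j}dx=1/(2d-i-j+1)$ together with the sign/factorial factors from $e^{-tN}$.

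I expect the main obstacle to be this log-weighted variance computation and checking the Lindeberg condition with only logarithmic gain. The Lindeberg check uses $p>2$, since each term is of size $O((\log n)^{-1/2})$ relative to the total. A secondary obstacle is controlling the random $\mathcal E_n$ contributions, which are only known to satisfy $\E\|\mathcal E_n\|\to0$. In every case I would handle them by an $L^1$ bound against summable deterministic weights whose total mass tends to zero after normalization.
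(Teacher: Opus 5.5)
Your proposal is correct and follows essentially the same route as the paper. The paper also uses the martingale $M_n=\Phi(n)^{-1}S_n$ with $\Phi(n)=\prod_{j<n}(I+A/j)=n^{A}\bigl(\Gamma(I+A)^{-1}+O(n^{-1})\bigr)$, a martingale CLT in the subcritical case, $L^2$ convergence of $M_n$ plus a tail martingale CLT in the supercritical case, and the logarithmic covariance computation through $e^{-uN}$ in the critical case.

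The differences are only technical. You obtain the asymptotics of $\Phi(n)$ from $\log(I+A/k)=A/k+O(k^{-2})$ instead of citing the matrix Gamma function. You normalize deterministically by $n^{-1/2}\Phi(n)$ rather than by $\Lambda_n^{-1/2}$, which avoids the paper's back-conversion step. You check Lindeberg via a Lyapunov $p$-th moment bound rather than uniform integrability.

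One small correction: bounding $\E\|S_n\|^2$ through $\|a_k^{-1}\|\le Ck^{-\lambda_{\min}(A)+\epsilon}$ gives only $o(n^2)$, not $O(n)$, when $\lambda_{\max}(A)\neq\lambda_{\min}(A)$. To get $O(n)$ in the subcritical case, use $\|a_na_k^{-1}\|\le C(n/k)^{\lambda_{\max}(A)+\epsilon}$ instead. In any case, $o(n^2)$ is all that the covariance correction term actually requires.
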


Note that the solution $\Xi$ to the Lyapunov equation \eqref{eq.Lypunov} is uniquely  defined by $$\operatorname{vec}(\Xi)
=
(I_d\otimes B+B\otimes I_d)^{-1}\operatorname{vec}(\Sigma)\quad\text{with}\quad B:=\frac{1}{2}I_d-A,$$ 
where $\otimes$ is the Kronecker product and $\operatorname{vec}(A)$ denotes the vector obtained by stacking the columns of $A$. We present the proof of Theorem \ref{thm.1} in Section \ref{sec:thm1}.

When $A$ is any $d\times d$ matrix such that $0<\lambda_{\min}(A)\le \lambda_{\max}(A)<1$, one can obtain the following general result.  We define
\begin{align*}
E_s := \bigoplus_{\Re(\lambda)<\frac12} E(\lambda),\quad
E_c := \bigoplus_{\Re(\lambda)=\frac12} E(\lambda),\quad
E_u := \bigoplus_{\Re(\lambda)>\frac12} E(\lambda),\quad
\end{align*}
where $E(\lambda)$ denotes the (real) generalized eigenspace of $A$
corresponding to $\lambda$ (with complex conjugate eigenvalues grouped
together). Then
$$
\mathbb R^d = E_s \oplus E_c \oplus E_u,
$$
and each of these subspaces is invariant under $A$.
Let $P_s,P_c,P_u$ denote the corresponding projections. These
projections commute with $A$, and we write $A_s$, $A_c$, and $A_u$ for the
restrictions of $A$ to $E_s$, $E_c$, and $E_u$, respectively.
Assume that the restriction $A_c$ of $A$ to the subspace $E_c$ has the Jordan decomposition
$$
A_c=QJ_cQ^{-1} \quad\text{with}\quad
J_c=\bigoplus_{r=1}^{R}(\lambda_r I_{m_r}+N_{m_r}),
\quad
\operatorname{Re}(\lambda_r)=\frac12,
$$
where $N_{m_r}$ is the nilpotent matrix with ones on the superdiagonal and
zeros elsewhere. Define
$$
D_n
=
Q\bigoplus_{r=1}^{R}
\operatorname{diag}\bigl(
(\log n)^{m_r-\frac12},\ldots,(\log n)^{\frac32},(\log n)^{\frac12}
\bigr)Q^{-1}.
$$
\begin{theorem}[Joint Central Limit Theorem for Mixed Regimes]\label{thm.2} Suppose that Assumption \ref{assump} and Assumption \ref{assump2} are fulfilled. 
Then $n^{-A} P_u S_n$ converges to an almost‑sure limit ${W}_u$.  Furthermore, we have
$$
\begin{pmatrix}
\displaystyle \frac{P_sS_n}{\sqrt n} \\[2mm]
\displaystyle D_n^{-1}n^{-A_c}P_cS_n \\[2mm]
\displaystyle \frac{P_uS_n-n^{A_u}W_u}{\sqrt n}
\end{pmatrix}
\xrightarrow[n\to\infty]{d}
\mathcal N\left(
0,
\begin{pmatrix}
\Xi_s & 0 & 0 \\
0 & \Xi_c & 0 \\
0 & 0 & \Xi_u
\end{pmatrix}
\right),
$$
where the block‑diagonal covariance matrices are determined as follows.

\begin{itemize}
    \item $\Xi_s$ is the unique solution to the Lyapunov equation
    $$
    \bigl(\tfrac{1}{2}I - A_s\bigr) \Xi_s + \Xi_s \bigl(\tfrac{1}{2}I - A_s\bigr)^\top = P_s \Sigma P_s^\top,
    $$
    where $A_s$ is the restriction of $A$ to $E_s$.

    \item  $\Xi_c$ is given by
    $$
    \Xi_c = Q \,\mathcal{V}\, Q^*,
    $$
where $\mathcal{V}$ is the block matrix  
$\mathcal{V}=(\mathcal{V}(r,s))_{1\le r,s\le R}$, such that its $(r,s)$ block is given by
\begin{align*}
    \mathcal{V}(r,s)&=(\mathcal{V}_{i,j}(r,s))_{1\le i\le m_r, 1\le j\le m_s},\\
    \mathcal V_{i,j}(r,s)
&=
\begin{cases}
\displaystyle
\frac{(-1)^{m_r+m_s-i-j}}
{(m_r-i)!\,(m_s-j)!\,(m_r+m_s-i-j+1)}
\widetilde{\Sigma}_{m_r,m_s}(r,s),
& \lambda_r=\lambda_s,\\[3mm]
0,
& \lambda_r\ne\lambda_s.
\end{cases}
\end{align*}
with $\widetilde{\Sigma}(r,s)$ being the $(r,s)$ block of size $m_r \times m_s$ of  $\widetilde{\Sigma}=Q^{-1} P_c \Sigma P_c^\top  (Q^{-1})^*$.

    \item $\Xi_u$ is the unique solution to the Lyapunov equation
    $$
    \bigl(A_u - \tfrac{1}{2}I\bigr) \Xi_u + \Xi_u \bigl(A_u - \tfrac{1}{2}I\bigr)^\top = P_u \Sigma P_u^\top,
    $$
    where $A_u$ is the restriction of $A$ to $E_u$.
\end{itemize}
\end{theorem}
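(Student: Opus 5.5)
The plan is to linearize the recursion and reduce to a vector martingale. Write $X_{n+1}=\frac1n AS_n+\xi_{n+1}$, where $\xi_{n+1}:=X_{n+1}-\E[X_{n+1}\mid\mathcal F_n]$ is a martingale difference. Then $S_{n+1}=(I_d+\frac1n A)S_n+\xi_{n+1}$. Set
$$a_n:=\prod_{k=1}^{n-1}\Bigl(I_d+\tfrac1kA\Bigr),$$
which is invertible for all $n$ since $\lambda_{\min}(A)>0$. By the standard Euler-product asymptotics $a_n=n^{A}(C+o(1))$ for some invertible $C$ commuting with $A$, and $M_n:=a_n^{-1}S_n=\sum_{k\le n}a_k^{-1}\xi_k$ is an $\R^d$-valued martingale. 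Since $P_s,P_c,P_u$ commute with $A$, they commute with $a_n$ and $n^A$. Hence $P_\bullet S_n=a_nP_\bullet M_n$, and each component is an $E_\bullet$-valued martingale transform driven by the same increments $\xi_k$.

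For the unstable part, $\E\|a_k^{-1}P_u\xi_k\|^2\lesssim k^{-2\lambda_{\min}(A_u)}(\log k)^{2(d-1)}$, which is summable because $\lambda_{\min}(A_u)>1/2$. So $P_uM_n\to M_\infty^u$ almost surely and in $L^2$, and we set $W_u:=CM^u_\infty$. Then $P_uS_n-n^{A_u}W_u=-a_n\sum_{k>n}a_k^{-1}P_u\xi_k+o(\sqrt n)$; the error term comes from the correction $a_n-n^AC$, and it needs a rate $O(n^{A}\cdot n^{-1})$, which the product expansion gives. The stable component is $a_n\sum_{k\le n}a_k^{-1}P_s\xi_k$ and the critical one is $a_n\sum_{k\le n}a_k^{-1}P_c\xi_k$. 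All three are handled jointly via the Cramér–Wold device together with a martingale CLT for triangular arrays, using the reversed-time (tail) martingale for the unstable block. Concretely, I would fix the normalized array
$$Y_{n,k}=\Bigl(n^{-1/2}a_na_k^{-1}P_s\xi_k,\; D_n^{-1}n^{-A_c}a_na_k^{-1}P_c\xi_k,\; n^{-1/2}a_na_k^{-1}P_u\xi_k\mathbf 1_{k>n}\Bigr),$$
summed over $k\le n$ for the first two blocks and $k>n$ for the third. The unstable tail sum can be truncated at $k\le Ln$ for a large $L$ with small $L^2$ error, so a single forward-filtration array suffices.

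The two CLT conditions are then checked. The conditional Lindeberg condition follows from the $p>2$ moments in Assumption~\ref{assump}: $\sup_k\E\|\xi_k\|^p<\infty$, and since each summand is $O(n^{-1/2}$ up to logs$)$, a Lyapunov-type bound $\sum_k\E\|Y_{n,k}\|^p\to0$ holds. For the conditional covariance I use $\E[\xi_k\xi_k^\top\mid\mathcal F_{k-1}]=\Sigma+\mathcal E_{k-1}-\mu\mu^\top$. The drift term is $\frac1{k^2}AS_kS_k^\top A^\top$, and it is negligible once one has the a priori bound $\E\|S_k\|^2=O(k^{2\lambda_{\max}\vee1}\log^{c}k)$, obtained from the martingale representation. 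This gives $k^{-2}\E\|S_k\|^2=o(1)$ because $\lambda_{\max}(A)<1$. The $\mathcal E_k$ term vanishes in $L^1$ after weighting, by Toeplitz. So the conditional covariance converges in $L^1$ to the deterministic block limits, which are Riemann sums of the form:
\begin{align*}
\frac1n\sum_{k\le n}(n/k)^{A_s}P_s\Sigma P_s^\top (n/k)^{A_s^\top}&\to\int_0^1 u^{-A_s}P_s\Sigma P_s^\top u^{-A_s^\top}\rmd u=\Xi_s,\\
\frac1n\sum_{k>n}(n/k)^{A_u}P_u\Sigma P_u^\top (n/k)^{A_u^\top}&\to\int_1^\infty u^{-A_u}P_u\Sigma P_u^\top u^{-A_u^\top}\rmd u=\Xi_u.
\end{align*}
Differentiating under the integral confirms that these satisfy the stated Lyapunov equations.

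The off-diagonal blocks vanish for two reasons. Between $s$ and $u$, the index ranges $k\le n$ and $k>n$ are disjoint. Between $c$ and the others, the $D_n^{-1}$ normalization makes the cross terms $O((\log n)^{-1/2})$, since the cross sums carry no logarithmic growth.

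The main obstacle is the critical block. In Jordan coordinates, $n^{-A_c}a_na_k^{-1}$ is essentially $k^{-J_c}$, whose entries are $k^{-\lambda_r}(\log k)^{j}(-1)^j/j!$. The covariance sum $\sum_{k\le n}k^{-\lambda_r}k^{-\bar\lambda_s}(\log k)^{i'+j'}$ behaves like $(\log n)^{i'+j'+1}/(i'+j'+1)$ when $\lambda_r=\lambda_s$, using $\Re\lambda=1/2$. This produces exactly $\mathcal V_{i,j}(r,s)$, with only the $(m_r,m_s)$ entry of $\widetilde\Sigma$ surviving at leading order. When $\lambda_r\ne\lambda_s$ the sum carries the oscillating factor $k^{-i\,\Im(\lambda_r-\lambda_s)}$, so it is $o$ of the normalization by summation by parts. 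I would carry out this computation first for a single Jordan block, reusing the argument of Theorem~\ref{thm.1}(c), and then check the cross-block oscillation cancellation. The a.s. convergence of $n^{-A}P_uS_n$ is already part of the above.
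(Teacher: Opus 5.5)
Your proposal is correct and follows the route the paper indicates. The paper omits the proof of Theorem~\ref{thm.2}, saying only that it follows from Theorem~\ref{thm.1} by projecting onto $E_s,E_c,E_u$. That means the same martingale $\Phi(n)^{-1}S_n$ (your $a_n^{-1}S_n$), the same Toeplitz and uniform-integrability checks, and the same Jordan-block computation.

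What you add is the gluing the paper leaves implicit. Taken literally, the three cases in the proof of Theorem~\ref{thm.1} use different CLTs: forward arrays for the stable and critical parts, and the tail CLT of Corollary~\ref{thm.mtg1} for the unstable part. Run separately, these give only marginal convergence. Your single forward array, with the unstable tail truncated at $k\le Ln$, gives the joint statement directly:
\begin{itemize}
\item the $u$--$s$ and $u$--$c$ covariance blocks vanish identically, because the index ranges $k\le n$ and $k>n$ are disjoint;
\item the $s$--$c$ block is $O((\log n)^{-1/2})$;
\item the price is a routine approximation argument as $L\to\infty$.
\end{itemize}

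You also correctly supply the a priori bound $\E\|S_k\|^2=O(k^{2\lambda_{\max}\vee 1}\log^c k)$. This is genuinely needed, because Lemma~\ref{lem:martingale}(b) is stated only for the three pure spectral configurations. Its estimate $\E\|S_n\|^2\le\|\Phi(n)\|^2\E\|M_n\|^2$ is of order $n^{1+2\lambda_{\max}-2\lambda_{\min}}$ up to logs when $\lambda_{\min}<1/2$. That exceeds $n^2$ for, e.g., eigenvalues $0.1$ and $0.9$. Make explicit that your bound comes from $S_n=\sum_{k\le n}a_na_k^{-1}\xi_k$ together with $\|a_na_k^{-1}\|\lesssim\|(n/k)^A\|$, and not from $\|a_n\|^2\E\|M_n\|^2$.

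One small inaccuracy is in your Lindeberg step: the summands are not uniformly $O(n^{-1/2})$ up to logs. Let $m:=\max_r m_r$.
\begin{itemize}
\item In the critical block the summands are of order $(\log n)^{-1/2}k^{-1/2}(1+\log k)^{m-1}$.
\item In the stable block they are of order $n^{\lambda_{\max}(A_s)-1/2}$ (up to logs) for bounded $k$.
\end{itemize}
The Lyapunov sum $\sum_k\E\|Y_{n,k}\|^p$ still tends to $0$, so your conclusion stands. In the critical block, $\sum_k k^{-p/2}(1+\log k)^{p(m-1)}<\infty$ for $p>2$, so that part is $O((\log n)^{-p/2})$. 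In the stable block, a direct computation gives $O\bigl(n^{1-p/2}+n^{p(\lambda_{\max}(A_s)-1/2)}\bigr)$ up to logs.
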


We state this extension for completeness. The proof of Theorem~\ref{thm.2}
follows directly from that of Theorem~\ref{thm.1} by projecting onto the
invariant subspaces $E_s$, $E_c$, and $E_u$, and is therefore omitted.

\subsection{Structure of the paper and strategies of proofs}
The remainder of the paper is organized as follows. In Section~\ref{sec:nonlinear}, we present the proof of Theorem~\ref{thm.nonlinear}. The analysis is driven by the sharp change of behavior across the critical line $\alpha=2\beta-1$:
\begin{itemize}
    \item Above the critical line, the drift $\mu(S_n,n)$ is asymptotically negligible
at the diffusive scale. In this regime, we show that the walk behaves essentially
as a martingale with small predictable perturbations. By constructing a suitable
Lyapunov function, we obtain tightness, and the asymptotic fluctuations are then
identified by applying a multivariate martingale central limit theorem.

    \item On the critical line, the drift $\mu(S_n,n)$ and the noise $U_{n+1}:=X_{n+1}-\E[X_{n+1}\mid\mathcal{F}_n]$ act at the same scale. Introducing the rescaled process $G_n:=n^{-1/2}S_n$, we
interpret the discrete-time dynamics of $(G_n)$ as an Euler-Maruyama type approximation scheme of the stochastic differential equation~\eqref{eq:sde_critical}. However, standard results from stochastic approximation theory for SDE do not apply in this setting. The main obstruction is that the drift field of the limiting SDE is non‑Lipschitz at the origin, which violates the regularity assumptions typically required in classical limit theorems for SDE stochastic approximation algorithms (see, e.g., \cite{D1996,P1998b}). 
To overcome this difficulty, we develop a direct diffusion‑approximation approach tailored to the present non‑Lipschitz setting: we first show that the rescaled process $(G_n)$ converges on finite logarithmic time windows to the diffusion \eqref{eq:sde_critical}, and we then exploit the long‑time ergodic properties of this diffusion to conclude that $G_n$ converges in distribution to its unique stationary measure.

    \item Below the critical line, the situation is even more delicate. In this regime,
the drift becomes strong enough to dominate the noise at large scales. Writing $G_n:=n^{-\gamma}S_n$, with $\gamma:=\frac{1-\beta}{1-\alpha}$, we interpret the discrete-time dynamics of this rescaled process as a Robbins--Monro approximation scheme for an ordinary differential equation. However, classical convergence results for ODE stochastic approximation algorithms are no longer applicable as the vector field of the ODE remains non‑Lipschitz and unbounded, and it possesses an unstable equilibrium at the origin. These features violate the
smoothness and boundedness assumptions required by standard convergence theorems, and existing martingale non‑convergence results (see, e.g., \cite{D1996, B1999}) do not suffice to rule out attraction to this unstable point. In the spirit of Pemantle’s nonattracting-point technique (see Chapter~3 in \cite{P1988}), we analyze the radial component of the dynamics and show that $\|G_n\|$ converges almost surely to the set of equilibria $\{0,r\}$, where
$r:=\gamma^{-1/(1-\alpha)}$. To exclude convergence to the unstable equilibrium at
$0$, we then extend a martingale non‑convergence theorem recently established by
Raimond and Tarrès \cite{RT2023} to the present discrete‑time setting (see Proposition \ref{lem:discrete_antitrap_from_CT}). This non-convergence result implies that $(G_n)$ cannot converge to $0$ and yields almost‑sure
localization of $n^{-\gamma}S_n$ on the hypersphere of radius $r$.
\end{itemize}
In Section~\ref{sec:thm1}, we prove Theorem~\ref{thm.1} dealing with the special case $\alpha=\beta=1$. The proof relies on a martingale approach combined with a careful
matrix‑analytic study of the linear normalization induced by the drift matrix $A$. By introducing a suitable matrix‑valued normalization $\Phi(n)$, we show that the renormalized process $M_n:=\Phi(n)^{-1}S_n$ is a martingale. The asymptotic behavior is then obtained by establishing convergence of the corresponding conditional quadratic variations and verifying a Lindeberg condition. This yields central limit theorems in the subcritical and critical regimes and almost‑sure convergence in the supercritical regime, with logarithmic corrections in the critical case reflecting the Jordan structure of the matrix
$A$.

\section{Proof of Theorem \ref{thm.nonlinear}}\label{sec:nonlinear}
\subsection{Above the critical line} Throughout this subsection, we assume that $\alpha<2\beta-1$ and $0<\alpha\le 1$. In particular,  we have $\beta>\frac{\alpha+1}{2}\ge \alpha.$
    
\begin{lemma}\label{Sb.lem} 
	Assume that $\alpha < 2\beta-1, 0<\alpha\le 1$. 
	Then, for each $\eps > 0$, 
	there exists a (random) positive constant $C$ such that a.s.
	$$\|S_n\| \le C n^{\frac{1}{2}+\eps}.$$ 
\end{lemma}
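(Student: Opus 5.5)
We decompose $S_n=M_n+\sum_{k=0}^{n-1}\mu(S_k,k)$, where $M_n:=\sum_{k=1}^n U_k$ and $U_{k+1}:=X_{k+1}-\E[X_{k+1}\mid\mathcal F_k]$. The plan is to control the martingale part by a standard almost-sure growth bound, and then treat the drift part through a discrete Gronwall-type comparison. The drift part is sublinear in $\|S_k\|$, since $\|\mu(s,k)\|=\|s\|^{\alpha}/k^{\beta}$ with $\alpha\le 1$. The constraint $\beta>\frac{\alpha+1}{2}$ should make this term negligible at scale $n^{1/2+\eps}$.

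\textbf{Step 1: the martingale part.} Under Assumption~\ref{assump.nonlinear} we have $\E[U_{k+1}U_{k+1}^\top\mid\mathcal F_k]=\Sigma-\mu\mu^\top\preceq \Sigma$, so $\E\|U_{k+1}\|^2\le \tr\Sigma$ uniformly in $k$. The series $\sum_k k^{-1/2-\eps}U_k$ is then an $L^2$-bounded martingale and converges a.s. By Kronecker's lemma, $\|M_n\|=o(n^{1/2+\eps})$ a.s., so $\|M_n\|\le C_1 n^{1/2+\eps}$ for a random $C_1$. (The $p$-th moment is not even needed here.)

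\textbf{Step 2: a comparison inequality.} Set $a_n:=\max_{k\le n}\|S_k\|$. Then
$$a_n\le C_1 n^{\frac12+\eps}+\sum_{k=1}^{n-1}\frac{a_k^{\alpha}}{k^{\beta}}+\|\mu(S_0,0)\|.$$
The $k=0$ term is harmless because $S_0=0$. Let $b_n:=C n^{1/2+\eps}$ be the candidate bound, and assume inductively that $a_k\le b_k$ for $k<n$. Then
$$\sum_{k<n}\frac{a_k^\alpha}{k^\beta}\le C^\alpha\sum_{k<n}k^{\alpha(\frac12+\eps)-\beta}.$$
Because $\beta>\frac{\alpha+1}{2}$, the exponent satisfies $\alpha(\tfrac12+\eps)-\beta<-\tfrac12+\alpha\eps$. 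For small $\eps$, specifically when $\alpha\eps-\tfrac12+1<\tfrac12+\eps$, which always holds since $\alpha\le1$, the sum is at most $c\,C^\alpha n^{1/2+\alpha\eps}$. This is $o(n^{1/2+\eps})$ when $\alpha<1$, and bounded by $c\,C n^{1/2+\eps}\cdot n^{-\delta}$ once we use the strict gap $\beta-\frac{\alpha+1}{2}=:\delta'>0$.

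Using that gap, the drift sum is at most $c\,C^{\alpha} n^{1/2+\alpha\eps-\delta'}\le c\,C^{\alpha}n^{1/2+\eps}$. Choosing $C\ge 2C_1$ and $C$ large enough that $c\,C^\alpha\le C/2$ (possible if $\alpha<1$; if $\alpha=1$ use the factor $n^{-\delta'}$) gives $a_n\le b_n$ for all $n\ge n_0$. Finitely many early indices are absorbed by enlarging $C$.

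\textbf{Main obstacle.} The delicate point is the case $\alpha=1$, where the comparison is linear. There $\sum_k a_k/k^\beta$ with $\beta>1$ is a summable-weight Gronwall inequality, giving $a_n\le C_1 n^{1/2+\eps}\prod_k(1+k^{-\beta})$, and the product is finite. In general the cleanest route is therefore a discrete Gronwall/Bihari argument run from a random time $n_0$ after which the tail weights $\sum_{k\ge n_0}k^{\alpha(1/2+\eps)-\beta-1/2-\eps}$ are small. The induction then closes with a random constant, which is exactly the statement of Lemma~\ref{Sb.lem}.
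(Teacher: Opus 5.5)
Your proof is correct, but it takes a different route from the paper. The paper never splits off the martingale. It applies the Robbins--Siegmund theorem directly to $Z_n=\|S_n\|^2/n^{1+2\eps}$, starting from the exact identity $\E[\|S_{n+1}\|^2\mid\mathcal F_n]=\|S_n\|^2+2\|S_n\|^{\alpha+1}n^{-\beta}+\tr(\Sigma)$. For $\alpha<1$, Young's inequality (exponents $\frac{2}{\alpha+1}$ and $\frac{2}{1-\alpha}$) absorbs the cross term into the negative drift $-\frac{1+2\eps}{n+1}Z_n$ produced by the normalization. For $\alpha=1$, the cross term becomes a summable multiplicative perturbation $(1+O(n^{-\beta}))Z_n$.

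You instead control $M_n$ pathwise, via an $L^2$-bounded series and Kronecker's lemma, and then close a deterministic Bihari/Gronwall comparison for the running maximum $a_n$. What your route buys is a clean separation of the two mechanisms: noise at scale $n^{1/2}$, and a sublinear drift that is negligible at that scale. As a by-product, it shows that the drift part is $o(\sqrt n)$ once $\alpha\eps<\beta-\frac{\alpha+1}{2}$. That is exactly the estimate the paper derives separately at the start of the proof of Theorem~\ref{thm.nonlinear}.a. The paper's route is a single supermartingale computation with no induction, and the same template is reused for Lemma~\ref{lem:S.bound}.

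A few points in your write-up need tidying.
\begin{itemize}
\item The inequality $\alpha\eps+\frac12<\frac12+\eps$ is strict only for $\alpha<1$, so "always holds since $\alpha\le1$" is not right at $\alpha=1$.
\item The bound $\sum_{k<n}k^{\alpha\eps-\frac12-\delta'}\le c\,n^{\frac12+\alpha\eps-\delta'}$ assumes the exponent exceeds $-1$. Otherwise the sum is $O(\log n)$, which is harmless.
\item For $\alpha=1$, state the induction explicitly. Fix a deterministic $n_0$ with $c\,n_0^{-\delta'}\le\frac12$, take $C\ge\max\{2C_1,\max_{k<n_0}a_k\,k^{-1/2-\eps}\}$, and induct from $n_0$. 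Enlarging $C$ does not affect the inductive step, so this works.
\end{itemize}
Your product bound $a_n\le C_1n^{1/2+\eps}\prod_{k\ge1}(1+k^{-\beta})$ is the cleanest way to handle $\alpha=1$.
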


\begin{proof}
	Let $Z_n = \|S_n\|^2 / n^{1+2\eps}$. We shall apply the 
	Robbins--Siegmund theorem (see Proposition~\ref{RS.thm} in the appendix) to show that $(Z_n)$ is bounded almost surely.	We notice that
	\begin{align*}
		\E[\|S_{n+1}\|^2\mid\mathcal{F}_n]
		&= \|S_n\|^2 + 2\langle S_n,\,\E[X_{n+1}\mid\mathcal{F}_n]\rangle 
		+ \E[\|X_{n+1}\|^2\mid\mathcal{F}_n]\\
		&= \|S_n\|^2 + \frac{2\|S_n\|^{\alpha+1}}{n^\beta} + \tr(\Sigma),
	\end{align*}
	where we used the assumptions that $\E[X_{n+1}\mid \mathcal{F}_n]=\mu(S_n,n)=\|S_n\|^{\alpha-1}S_n/n^\beta$ and 
	$\E[X_{n+1}X_{n+1}^\top\mid\mathcal{F}_n]= \Sigma$. Dividing by $(n+1)^{1+2\eps}$, we obtain that
	\begin{equation}\label{eq:Zrec}
		\E[Z_{n+1}\mid\mathcal{F}_n]
		=\frac{n^{1+2\eps}}{(n+1)^{1+2\eps}}\,Z_n
		+ 2\frac{\|S_n\|^{\alpha+1}}{n^\beta(n+1)^{1+2\eps}}
		+ \frac{\tr(\Sigma)}{(n+1)^{1+2\eps}}.
	\end{equation}

We first consider the case $0<\alpha<1$. Since $\|S_n\|^2 = n^{1+2\eps}Z_n$, we have
	\begin{equation} \label{eq:Young}
	    \begin{aligned}
	      \frac{\|S_n\|^{\alpha+1}}{n^\beta(n+1)^{1+2\eps}}&\le 
	\frac{Z_n^{(\alpha+1)/2}}{n^{\beta + (1+2\eps)(1-\alpha)/2}} \le \frac{1+2\eps}{4n}\,Z_n 
		+ \frac{C_\alpha}{n^{\frac{2\beta-1-\alpha}{1-\alpha}+1+2\eps}}
	    \end{aligned}
	\end{equation}
 with some constant 
    $C_\alpha > 0$ depending only on $\alpha$,
in which the last bound is obtained by applying Young's inequality $AB \le \dfrac{A^p}{p} + \dfrac{B^q}{q}$ 
	with $p = \dfrac{2}{\alpha+1}$, $q = \dfrac{2}{1-\alpha}$, 
	$A = \mu\, Z_n^{(\alpha+1)/2}$ and $B = \mu^{-1}n^{-[\beta + (1+2\eps)(1-\alpha)/2]}$, 
and	choosing $\mu^{2/(\alpha+1)} = \dfrac{1+2\eps}{4n}$. Note that since $\alpha < 2\beta-1$, the exponent 
	$\frac{2\beta-1-\alpha}{1-\alpha}+1+2\eps > 1$, 
	so $\sum_n n^{-\frac{2\beta-1-\alpha}{1-\alpha}-1-2\eps} < \infty$. Using the Taylor expansion, we have 
	\begin{align}\label{eq.Taylor}
	    \frac{n^{1+2\eps}}{(n+1)^{1+2\eps}} 
	= 1 - \frac{1+2\eps}{n+1} + O(n^{-2}).
	\end{align}
Combining \eqref{eq:Young}, \eqref{eq.Taylor} together with \eqref{eq:Zrec}, we obtain that
	$$
	\E[Z_{n+1}\mid\mathcal{F}_n]
	\le (1 + \alpha_n)\,Z_n + \beta_n - Y_n,
	$$
	where 
	\begin{align*}
		\alpha_n = O(n^{-2}),\quad 
		\beta_n  = \frac{2C_\alpha}{n^{\frac{2\beta-1-\alpha}{1-\alpha}+1+2\eps}} 
		+ \frac{\tr(\Sigma)}{(n+1)^{1+2\eps}},\quad
		Y_n =\frac{(1+2\eps)(n-1)}{2n(n+1)}Z_n.
	\end{align*}
It is clear that all the conditions of Proposition~\ref{RS.thm} are fulfilled:
 $\sum_{n=1}^{\infty} \alpha_n < \infty$,  $\sum_{n=1}^{\infty} \beta_n < \infty$ and $Y_n \ge 0$ for all $n\ge 1$. By Proposition~\ref{RS.thm}, $Z_n$ converges a.s. to a finite limit, yielding the result of the lemma in the case $0<\alpha<1$.

It remains to consider the case $\alpha=1$. In this case the assumption
$\alpha<2\beta-1$ implies that $\beta>1$. Moreover,
$\|S_n\|^{\alpha+1}=\|S_n\|^2=n^{1+2\eps}Z_n$. Hence, from \eqref{eq:Zrec} and \eqref{eq.Taylor}, we get
\begin{align*}
\E[Z_{n+1}\mid\mathcal{F}_n]
&=
\frac{n^{1+2\eps}}{(n+1)^{1+2\eps}}
\left(1+\frac{2}{n^\beta}\right)Z_n
+
\frac{\tr(\Sigma)}{(n+1)^{1+2\eps}}\\
&\le
\left(1+\frac{C_1}{n^\beta}+\frac{C_2}{n^2}\right)Z_n
+
\frac{\tr(\Sigma)}{(n+1)^{1+2\eps}},
\end{align*}
for some positive constants $C_1,C_2>0$. Since $\beta>1$, we have
$\sum_{n=1}^\infty n^{-\beta}<\infty$, $\sum_{n=1}^\infty n^{-2}<\infty$,
$\sum_{n=1}^\infty (n+1)^{-1-2\eps}<\infty$, and all the conditions of Proposition~\ref{RS.thm} are thus fulfilled with
\[
\alpha_n=\frac{C_1}{n^\beta}+\frac{C_2}{n^2},
\quad
\beta_n=\frac{\tr(\Sigma)}{(n+1)^{1+2\eps}},
\quad
Y_n=0.
\]
By Proposition~\ref{RS.thm}, $Z_n$ converges a.s. to a finite limit. Hence $Z_n$ is bounded almost surely, and therefore
$\|S_n\| \le C n^{\frac12+\eps}$
for some random positive constant $C$. This completes the proof.
\end{proof}

\begin{proof}[Proof of Theorem \ref{thm.nonlinear}.a]
Let $U_1=X_1$ and for $n\ge 1$ let 
$$
U_{n+1}:=X_{n+1}-\E[X_{n+1}\mid\mathcal F_n]
=X_{n+1}-\dfrac{H(S_n)}{n^{\beta}}.
$$
Then $M_n:=\sum_{k=1}^n U_k$
is a martingale and we obtain the Doob's decomposition:
$$
S_n = M_n + \sum_{k=1}^{n-1}\frac{H(S_{k})}{k^{\beta}}.
$$
Since $\alpha<2\beta-1$, we choose a sufficiently small $\eps>0$ such that
$
\frac{\alpha}{2}-\beta+\alpha\eps<-\frac12.
$
In particular,
$
\alpha-2\beta+2\alpha\eps<-1.
$
By Lemma \ref{Sb.lem}, we have 
$
\|H(S_n)\|=\|S_n\|^{\alpha}\le C_1 n^{\frac{\alpha}{2}+\alpha\eps}
$
for some random constant $C_1>0$. Hence
\begin{align}\label{inq.noise}
      \frac{\|H(S_n)\|}{n^{\beta}}
      \le C_1 n^{\alpha/2-\beta+\alpha\eps}\to 0
      \quad\text{a.s.}
\end{align}
It follows that
$$
\Bigl\|\sum_{k=1}^{n-1}\frac{H(S_{k})}{k^{\beta}}\Bigr\|
\le C_1 \sum_{k=1}^{n-1} k^{\frac{\alpha}{2}-\beta+\alpha\eps}
=o(\sqrt{n}).
$$
Consequently,
\begin{align}\label{S.eqn}
    \frac{S_n}{\sqrt{n}} = \frac{M_n}{\sqrt{n}} + o(1)
    \quad\text{a.s.}
\end{align}
To complete the proof, it remains to prove that $M_n/\sqrt{n}$ converges in distribution to $\mathcal{N}(0,\Sigma)$. Let
$$
\Lambda_n:=\sum_{k=1}^n\E[U_kU_k^{\top}]
=\E[M_nM_n^{\top}].
$$
We first show that
\begin{align}\label{eq.Lambda}
    \Lambda_n = n \Sigma + o(n)
    \quad\text{as } n\to\infty.
\end{align}
From the definition of $U_{k+1}$, for $k\ge1$,
$$
\E[U_{k+1}U_{k+1}^{\top}\mid\mathcal{F}_k]
= \E[X_{k+1}X_{k+1}^{\top}\mid\mathcal{F}_k]
   -\frac{1}{k^{2\beta}}H(S_k)H(S_k)^{\top}
   =\Sigma + \delta_k,
$$
where
$
\delta_k:= -{k^{-2\beta}}H(S_k)H(S_k)^{\top}.
$
Taking expectations, we get
$$
\E[U_{k+1}U_{k+1}^{\top}]=\Sigma+\E[\delta_k].
$$
Using the fact that
$
\|\delta_k\|={k^{-2\beta}}\|H(S_k)\|^2
={k^{-2\beta}}\|S_k\|^{2\alpha},
$
we have
$
\E\|\delta_k\|
=
{k^{-2\beta}}\E\|S_k\|^{2\alpha}
\le
{k^{-2\beta}} \bigl(\E\|S_k\|^p\bigr)^{2\alpha/p}.
$
We also have 
\begin{align}\label{inq.Lp}
\E\|S_k\|^p
\le k^{p-1}\sum_{j=1}^k \E\|X_j\|^p
\le C_2 k^p,
\end{align}
where  $C_2:=\sup_{n\ge 1} \E[\|X_n\|^p]<\infty$ by Assumption \ref{assump}. 
Hence
$
\E\|\delta_k\|
\le C_2^{2\alpha/p} k^{2\alpha-2\beta}\to0.
$
Here we used that $\alpha<\beta$, which follows from $\alpha<2\beta-1$ and $0<\alpha\le1$. Therefore, by Ces\`aro's lemma,
$$
\frac1n\sum_{k=1}^{n-1}\E[\delta_k]\to0.
$$
Since the contribution of $U_1$ is fixed, this proves \eqref{eq.Lambda}. 

It follows from \eqref{eq.Lambda} that $\Lambda_n$ is positive definite for all sufficiently large $n$. To prove the convergence to $\mathcal{N}(0,\Sigma)$ of the martingale $(M_n)_{n\ge 1}$, we apply Corollary~\ref{thm.mtg2} (see in the appendix). We now verify the conditions of this corollary. 

From Lemma \ref{Sb.lem}, we have almost surely
$
\|\delta_k\|
\le C_1^2 k^{\alpha-2\beta+2\alpha\eps}.
$
Recall that $\eps$ is sufficiently small such that
$
\alpha-2\beta+2\alpha\eps<-1.
$
Thus
$
\sum_{k=1}^\infty\|\delta_k\|<\infty$ a.s.
Hence, a.s.,
$$
\sum_{k=1}^n\E[U_kU_k^{\top}\mid\mathcal{F}_{k-1}]
= n\Sigma + \sum_{k=1}^{n-1}\delta_{k}+O(1)
=n\Sigma+o(n),
$$
where the term $O(1)$ comes from the initial variable $U_1$. Therefore
$$
\Lambda_n^{-1/2}\sum_{k=1}^n\E[U_kU_k^{\top}\mid\mathcal{F}_{k-1}]\Lambda_n^{-1/2}
=
\Lambda_n^{-1/2}\bigl(n\Sigma+o(n)\bigr)\Lambda_n^{-1/2}
\xrightarrow{\P} I_d,
$$
since $\Lambda_n= n\Sigma+o(n)$ by \eqref{eq.Lambda}. 

We next show that for every $\eps>0$,
$$
\Lambda_n^{-1/2}\sum_{k=1}^n\E\bigl[U_kU_k^{\top}
\mathbf{1}_{\{\|\Lambda_n^{-1/2}U_k\|>\eps\}}\mid\mathcal{F}_{k-1}\bigr]\Lambda_n^{-1/2}
\xrightarrow{\P} 0.
$$
Notice that $$\|U_k\|^p
\le 2^{p-1}\left(\|X_k\|^p + k^{-\beta p}\|H(S_{k-1})\|^p\right)=2^{p-1}\left(\|X_k\|^p + k^{-\beta p}\|S_{k-1}\|^{\alpha p}\right).$$
As $\alpha\le1$, by H\"older's inequality and \eqref{inq.Lp}, we have
$
\E[\|S_{k}\|^{\alpha p}]
\le (\E[\|S_{k}\|^p])^{\alpha}
\le C_2^{\alpha}k^{\alpha p}.
$
Since $\alpha\le\frac{\alpha+1}{2} <\beta$, we thus have
$
\sup_{k\ge 1}\E[\|U_k\|^p]<\infty.
$
Therefore $\|U_k\|^2$ is uniformly integrable.  Note that $\|\Lambda_n^{-1/2}\|=O(n^{-1/2})$ by \eqref{eq.Lambda}. Hence
$
{\eps}/{\|\Lambda_n^{-1/2}\|}\to\infty.
$
Using $\|U_kU_k^\top\|=\|U_k\|^2$, we obtain
\begin{align*}
&\E\Bigl\|
\Lambda_n^{-1/2}\sum_{k=1}^n\E\bigl[
U_kU_k^{\top}
\mathbf{1}_{\{\|\Lambda_n^{-1/2}U_k\|>\eps\}}
\mid\mathcal{F}_{k-1}\bigr]\Lambda_n^{-1/2}
\Bigr\|\\
&\le
\|\Lambda_n^{-1/2}\|^2
\sum_{k=1}^n
\E\bigl[
\|U_k\|^2
\mathbf{1}_{\{\|U_k\|>\eps/\|\Lambda_n^{-1/2}\|\}}
\bigr]\le
C n^{-1}
\sum_{k=1}^n
\E\bigl[
\|U_k\|^2
\mathbf{1}_{\{\|U_k\|>\eps/\|\Lambda_n^{-1/2}\|\}}
\bigr]\to0,
\end{align*}
where the last convergence follows from the uniform integrability of
$\{\|U_k\|^2:k\ge1\}$. This implies the required convergence in probability.

As both conditions of Corollary~\ref{thm.mtg2} are satisfied, we infer that
$
\Lambda_n^{-1/2}M_n \xrightarrow{d} \mathcal{N}_d(0,I_d).
$
Since $n^{-1/2}\Lambda_n^{1/2}\to \Sigma^{1/2}$, we have
$
n^{-1/2}M_n \xrightarrow{d}\mathcal{N}(0,\Sigma).
$
Combining this with \eqref{S.eqn}, we obtain
$
n^{-1/2}{S_n}\xrightarrow{d}\mathcal{N}(0,\Sigma),
$
which completes the proof.
\end{proof}

\subsection{On the critical line}

Recall that $\beta=(\alpha+1)/2$. Define $G_n = S_n/\sqrt{n}$. Let $U_1=X_1$ and for $n\ge 1$ let $$U_{n+1}:=X_{n+1}-\E[X_{n+1}\mid\mathcal{F}_n]=X_{n+1}-\frac{ H(S_n)}{n^{(\alpha+1)/2}}=X_{n+1}-\frac{H(G_n)}{\sqrt n}.$$ From the recursion $S_{n+1}=S_n+X_{n+1}$,
we obtain
\begin{align*}
G_{n+1} &= \frac{\sqrt{n}}{\sqrt{n+1}}G_n + \frac{X_{n+1}}{\sqrt{n+1}} =   \frac{\sqrt{n}}{\sqrt{n+1}}G_n + \frac{H(G_n)}{\sqrt{n(n+1)}} + \frac{U_{n+1}}{\sqrt{n+1}} \\
&= \Bigl(1-\frac{1}{2(n+1)}-\frac{1}{8(n+1)^2}+O(n^{-3})\Bigr)G_n\\
& + \Bigl(\frac{1}{n+1} + \frac{1}{2(n+1)^2} + O(n^{-3})\Bigr) \|G_n\|^{\alpha-1} G_n + \frac{U_{n+1}}{\sqrt{n+1}},
\end{align*}
where in the last step we use the Taylor expansions for $\sqrt{n/(n+1)}$ and $1/\sqrt{n(n+1)}$ as $n\to\infty$. 
Therefore
\begin{equation}\label{eq:Gn_recursion}
G_{n+1}-G_n = \frac{1}{n+1}\Bigl(-\frac12 G_n + H(G_n) + R_n\Bigr) + \frac{U_{n+1}}{\sqrt{n+1}},
\end{equation}
where $$R_n := -\frac{1}{8(n+1)}G_n + \frac{1}{2(n+1)}\|G_n\|^{\alpha-1}G_n + O\!\left(n^{-2}\right)(\|G_n\| + \|G_n\|^{\alpha}).$$ Note that
$\|R_n\| \le C n^{-1}(\|G_n\|+\|G_n\|^{\alpha})$ for some constant $C>0$.

We note that \eqref{eq:Gn_recursion} defines a stochastic approximation for the diffusion \eqref{eq:sde_critical}, whose drift vector field
\[
F(x)=-\tfrac{1}{2}x+\|x\|^{\alpha-1}x, \quad {0<\alpha<1}.
\]
is not Lipschitz on any neighborhood of the origin.
This prevents us from applying directly the standard results on the convergence of stochastic approximation algorithms to stationary distributions of SDEs, which typically rely on assumptions such as global Lipschitz continuity of the drift and a conditional $L^p$-moment bound on the martingale noise for some $p>2$ (see, e.g., \cite{D1996,P1998b}). Instead, we use a direct diffusion-approximation argument adapted to the present non-Lipschitz setting. More precisely, we show that the rescaled process $(G_n)$ converges on finite logarithmic time windows to the diffusion generated by $F$, and then exploit the long-time ergodic behavior of this diffusion to conclude that $G_n$ converges in distribution to its unique stationary measure.

\begin{lemma}\label{sde.unique_solution} For every initial law on $\R^d$, the SDE \eqref{eq:sde_critical} admits a
non-explosive global weak solution, and its law is unique. Let
$(P_t)_{t\ge0}$ denote the corresponding Markov semigroup. Then there exists
a unique stationary distribution $\pi$ for this semigroup and there exist
constants $C,\lambda>0$ such that
\begin{equation}
\label{eq.exp_ergo}
|P_t f(x)-\pi(f)|
\le
C e^{-\lambda t}(1+\|x\|^2)\|f\|_\infty
\end{equation}
for every bounded measurable function $f$, every $x\in\R^d$ and every $t\ge0$.
\end{lemma}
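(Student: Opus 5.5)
We prove Lemma~\ref{sde.unique_solution} in three steps: weak well-posedness, a Lyapunov drift condition, and the Harris-type ergodic theorem of Meyn--Tweedie (equivalently Hairer--Mattingly).

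\textbf{Well-posedness.} The drift $F(x)=-\tfrac12 x+\|x\|^{\alpha-1}x$ is continuous on $\R^d$ (with $F(0)=0$, since $\|H(x)\|=\|x\|^\alpha\to0$) and has linear growth, because $\|F(x)\|\le \tfrac12\|x\|+\|x\|^\alpha\le C(1+\|x\|)$. The diffusion coefficient $\Sigma^{1/2}$ is constant and nondegenerate. Weak existence follows from Skorokhod's theorem for continuous coefficients with linear growth. Non-explosion follows from the Lyapunov bound below.

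For uniqueness in law we use the Girsanov transformation. Let $Y_t=x+\Sigma^{1/2}B_t$. Since $\|F(y)\|\le C(1+\|y\|)$ and $\Sigma$ is invertible, the density $\exp\bigl(\int_0^t\langle \Sigma^{-1/2}F(Y_s),\rmd B_s\rangle-\tfrac12\int_0^t\|\Sigma^{-1/2}F(Y_s)\|^2\rmd s\bigr)$ is a true martingale by the Bene\v{s} criterion for linear-growth drifts. Girsanov's theorem then gives both weak existence and uniqueness in law. Alternatively, Stroock--Varadhan applies directly, since the diffusion matrix is constant and uniformly elliptic and the drift is bounded on compacts. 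Hence the laws form a Feller Markov semigroup $(P_t)$. It is strong Feller and has everywhere positive transition densities, because the Girsanov density is positive relative to the nondegenerate Gaussian law.

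\textbf{Lyapunov condition.} Take $V(x)=1+\|x\|^2$. The generator is $\mathcal L f=\langle F,\nabla f\rangle+\tfrac12\tr(\Sigma\nabla^2 f)$, so
$$\mathcal L V(x)=-\|x\|^2+2\|x\|^{\alpha+1}+\tr(\Sigma).$$
Since $\alpha+1<2$, Young's inequality gives $2\|x\|^{\alpha+1}\le \tfrac12\|x\|^2+C$. Therefore $\mathcal L V\le -\tfrac12 V+K$ for some constant $K$. Dynkin's formula with localization then yields $\E_x V(X_t)\le e^{-t/2}V(x)+2K$, which rules out explosion and gives tightness.

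\textbf{Minorization and conclusion.} The main obstacle is the minorization on sublevel sets $\{V\le R\}$: we need $P_{t_0}(x,\cdot)\ge \eta\,\nu(\cdot)$ uniformly for $\|x\|\le \sqrt{R}$. The approach is to express $P_{t_0}(x,A)$ as $\E[\1_A(Y_{t_0})\mathcal{Z}_{t_0}]$, where $\mathcal{Z}_{t_0}$ is the Girsanov density. We restrict to the event that $\sup_{s\le t_0}\|Y_s\|\le R'$. On this event, $F$ is bounded, so $\mathcal{Z}_{t_0}$ is bounded below by a positive constant with high probability, uniformly in $x$. The Gaussian law of $Y_{t_0}$ has a density bounded below on a fixed ball, uniformly in $\|x\|\le\sqrt R$. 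This gives a uniform lower bound by a multiple of Lebesgue measure on that ball.

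Alternatively, the minorization follows from the strong Feller property and irreducibility, together with the fact that sublevel sets of $V$ are compact and hence petite for a $T$-chain. Harris's theorem for the skeleton chain $(X_{nt_0})$ then gives a unique invariant $\pi$ and the bound $|P_{nt_0}f(x)-\pi(f)|\le C e^{-\lambda n}V(x)\|f\|_\infty$. To pass to all $t\ge 0$, write $t=nt_0+s$ and apply this bound to $P_s f$, noting that $\|P_sf\|_\infty\le\|f\|_\infty$. This yields \eqref{eq.exp_ergo}. Uniqueness of $\pi$ for the continuous semigroup follows because any invariant law is invariant for the skeleton.

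In $d=1$, one can check directly that the given density $p$ solves $\mathcal L^*p=0$, since the drift is a gradient: $F=-\nabla\bigl(\tfrac{\|x\|^2}{2}-\tfrac{2}{\alpha+1}\|x\|^{\alpha+1}\bigr)$.
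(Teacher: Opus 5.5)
Your architecture is the paper's: a quadratic Lyapunov function with $\mathcal L V\le -cV+K$, a Harris-type theorem for a skeleton chain $(X_{nt_0})$, and interpolation over $t=nt_0+s$ using $\|P_sf\|_\infty\le\|f\|_\infty$. The difference is in the analytic inputs.

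The paper checks $\|b(x)-b(y)\|\le\kappa_1(1\vee\|x-y\|)$ via the $\alpha$-H\"older continuity of $H$, and then gets weak well-posedness \emph{and} the minorization in one stroke from the Menozzi--Pesce--Zhang two-sided Gaussian bound (Theorem~\ref{thm:MPZ}). The lower bound $p_\delta(x,y)\ge C_0^{-1}g_{\lambda_0^{-1}}(\delta,\theta^{(1)}_\delta(x)-y)$ is uniform for $x$ in a compact set and $y$ in a ball, so Doeblin's condition on $\{V\le R\}$ is immediate and Hairer--Mattingly applies. You use more classical tools instead: Skorokhod existence, Girsanov with Bene\v{s}' criterion or Stroock--Varadhan for uniqueness in law, and Meyn--Tweedie theory. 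That is lighter on citations but pushes the real work into the minorization step.

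\textbf{Minorization, first route.} It does not close as written. Knowing $\mathcal Z_{t_0}\ge c$ on an event $E$ of high probability only gives
$P_{t_0}(x,A)\ge c\,\P(\{Y_{t_0}\in A\}\cap E)\ge c\bigl(\P(Y_{t_0}\in A)-\P(E^c)\bigr)$.
This is useless for small $A$, whereas a minorization must be linear in $\nu(A)$ for every Borel set $A$. You need a bound conditional on $Y_{t_0}=y$, uniform in $x$ in the compact and $y$ in the ball. One way: write the density as $g(y-x)\,\E[\mathcal Z_{t_0}\mid Y_{t_0}=y]$, then apply Jensen, $\E[\mathcal Z\mid Y_{t_0}=y]\ge\exp(\E[\log\mathcal Z\mid Y_{t_0}=y])$, under the Brownian-bridge law. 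The bridge drift $(y-Y_s)/(t_0-s)$ has integrable expectation, so this gives a uniform lower bound.

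\textbf{Minorization, second route.} The $T$-chain argument is sound: Lebesgue-irreducibility makes compact sets petite, and aperiodicity (from the everywhere-positive densities) makes them small; alternatively, invoke Meyn--Tweedie's $V$-uniform ergodicity theorem directly. However, it rests on the strong Feller property, which you assert without proof. Positivity of the Girsanov density gives equivalence of $P_t(x,\cdot)$ to Lebesgue measure. It does not give continuity of $x\mapsto P_tf(x)$ for bounded measurable $f$. The paper also takes strong Feller as a black box, but uses it only for uniqueness of the invariant law of the continuous-time semigroup.

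\textbf{Invariance under $P_s$.} Here your argument is neater than the paper's, but state it explicitly, since your interpolation step uses $\pi(P_sf)=\pi(f)$. Because $P_s$ commutes with $P_{t_0}$, $\pi P_s$ is invariant for the skeleton and hence equals $\pi$. Uniqueness then follows as you say, with no appeal to Theorem~\ref{thm.Ber_Harris}.

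\textbf{Minor slip outside the lemma.} $-\nabla\bigl(\tfrac{\|x\|^2}{2}-\tfrac{2}{\alpha+1}\|x\|^{\alpha+1}\bigr)=2F(x)$, not $F(x)$. The exponent in the stated density is $2U/\Sigma$ with $F=-\nabla U$, which is what it should be.
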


\begin{proof}
	We first show that the conditions $(H_\sigma^{\alpha})$ and $(H_{b}^0)$ of Theorem~\ref{thm:MPZ} hold for the diffusion \eqref{eq:sde_critical} with
	$$
	\sigma(x):=\Sigma^{1/2},\quad 
	b(x):=-\frac{1}{2}x+\|x\|^{\alpha-1}x.
	$$
	It is clear that the condition $(H^\alpha_\sigma)$ is fulfilled as $\sigma$ is a constant and
	symmetric positive definite matrix. Since
	$H(x):=\|x\|^{\alpha-1}x$ is H\"older continuous with exponent
	$\alpha\in(0,1)$, there exists a constant $L_\alpha>0$ such that
	$
	\|H(x)-H(y)\|\le L_\alpha\|x-y\|^\alpha$ for all $x,y\in\mathbb R^d$, and thus
	$$
	\|b(x)-b(y)\|
	\le \frac12\|x-y\|+L_\alpha\|x-y\|^\alpha
	\le \kappa_1(1\vee\|x-y\|)
	$$
	for some constant $\kappa_1>0$. Hence condition $(H^0_b)$ of
	Theorem~\ref{thm:MPZ} is also verified. By Theorem~\ref{thm:MPZ}, the
	SDE \eqref{eq:sde_critical} admits a unique weak solution and a
	transition density.

    We now prove non-explosion and exponential ergodicity. Applying the weighted Young inequality
	$ab \le \frac{\eps^p a^p}{p}+\frac{b^q}{\eps^q q}$
	with $p=\frac{2}{\alpha+1}$, $q=\frac{2}{1-\alpha}$,
	$a=\|x\|^{\alpha+1}$, $b=1$, and $\eps^p=\frac{1}{2}$, we notice that
\begin{equation}\label{eq:Young_used}
		\|x\|^{\alpha+1}
		\le \frac{\alpha+1}{4}\,\|x\|^2 + C_\alpha
		\quad\forall\, x \in \mathbb{R}^d, \text{ with $C_\alpha := \frac{1-\alpha}{2} \cdot 2^{\frac{1+\alpha}{1-\alpha}}$}.
	\end{equation}
Let $V(x)=\|x\|^2$ and denote by $\mathcal{L}$ the generator of~\eqref{eq:sde_critical}. Using \eqref{eq:Young_used}, we notice that  
$$
(\mathcal{L}V)(x) =-\|x\|^2+2\|x\|^{\alpha+1}+\mathrm{tr}(\Sigma) \le -\frac{1-\alpha}{2}\|x\|^2+d_0,
$$
where $d_0:=2C_\alpha+\mathrm{tr}(\Sigma)$. By Theorem~\ref{thm.Ber_nonexp}, the SDE \eqref{eq:sde_critical} admits global in time solutions. Together with the uniqueness of the weak solution obtained from Theorem~\ref{thm:MPZ}, this yields a unique-in-law non-explosive global weak solution to \eqref{eq:sde_critical}. By Proposition \ref{thm.ergodicity},
the corresponding Markov semigroup admits a unique invariant probability measure $\pi$ and \eqref{eq.exp_ergo} holds.
\end{proof}

\begin{lemma}\label{lem:tightness}
We have $\sup_{n\ge 1}\E[\|G_n\|^2]<\infty$.
\end{lemma}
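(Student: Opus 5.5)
We work directly with the second moments of $S_n$, exactly as in the proof of Lemma~\ref{Sb.lem}, but now take expectations instead of using Robbins--Siegmund. On the critical line $\beta=(\alpha+1)/2$, so the conditional identity computed there reads
\[
\E[\|S_{n+1}\|^2\mid\mathcal F_n]=\|S_n\|^2+\frac{2\|S_n\|^{\alpha+1}}{n^{(\alpha+1)/2}}+\tr(\Sigma).
\]
Setting $a_n:=\E[\|G_n\|^2]=\E\|S_n\|^2/n$ and taking expectations, the middle term equals $2\,\E[\|G_n\|^{\alpha+1}]$. Dividing by $n+1$ gives
\[
a_{n+1}=\frac{n}{n+1}a_n+\frac{2\E\|G_n\|^{\alpha+1}+\tr(\Sigma)}{n+1}.
\]
This is the key point: on the critical line the drift contribution is of the same order as the noise contribution, namely $O(1)$ per step after rescaling.

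Next we bound the drift term by the Young inequality \eqref{eq:Young_used}, $\|x\|^{\alpha+1}\le\frac{\alpha+1}{4}\|x\|^2+C_\alpha$. This yields
\[
a_{n+1}\le\frac{n}{n+1}a_n+\frac{\frac{\alpha+1}{2}a_n+2C_\alpha+\tr(\Sigma)}{n+1}
=\Bigl(1-\frac{1-\alpha}{2(n+1)}\Bigr)a_n+\frac{d_0}{n+1},
\]
with $d_0=2C_\alpha+\tr(\Sigma)$ as in Lemma~\ref{sde.unique_solution}. Since $\alpha<1$, the coefficient of $a_n$ is $1-c/(n+1)$ with $c=(1-\alpha)/2>0$.

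A standard comparison argument now closes the proof. Set $K:=\max\{a_1,d_0/c\}$, and note $a_1=\E\|X_1\|^2<\infty$ by Assumption~\ref{assump}. If $a_n\le K$, then
\[
a_{n+1}\le K-\frac{cK-d_0}{n+1}\le K.
\]
By induction, $\sup_n a_n\le K<\infty$. Alternatively, one can unroll the recursion as a product $\prod(1-c/(k+1))\sim n^{-c}$ times a sum, but the induction is cleaner.

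There is no real obstacle here. The only step needing care is justifying that the expectations are finite, so that taking expectations of the conditional identity is legitimate. This holds because $\E\|S_n\|^2\le Cn^2$ by \eqref{inq.Lp} (or simply by Assumption~\ref{assump} with $p>2$), and then $\E\|S_n\|^{\alpha+1}<\infty$ as well. The conditional identity itself uses $\E[X_{n+1}X_{n+1}^\top\mid\mathcal F_n]=\Sigma$ from Assumption~\ref{assump.nonlinear}.
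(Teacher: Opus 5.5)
Your proof is correct and follows essentially the same route as the paper: you derive a one-step drift inequality $a_{n+1}\le (1-\frac{c}{n+1})a_n+\frac{C}{n+1}$ for $a_n=\E\|G_n\|^2$ by absorbing the $\|G_n\|^{\alpha+1}$ term into the quadratic one via Young's inequality, and then close by induction. The only difference is bookkeeping: the paper expands the Taylor-perturbed recursion \eqref{eq:Gn_recursion} and must control the remainder $R_n$, so its inequality holds only for $n\ge n_0$, whereas your exact identity for $\E\|S_{n+1}\|^2$ gives a remainder-free recursion valid for all $n\ge1$, with explicit constants $c=\frac{1-\alpha}{2}$ and $d_0$ that match the generator bound in Lemma~\ref{sde.unique_solution}.
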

\begin{proof}
From (\ref{eq:Gn_recursion}), we have
\begin{align*}
\E[\|G_{n+1}\|^2\mid\mathcal{F}_n]
&=\E\Big[ \Bigl\|G_n + \frac{1}{n+1}\Bigl(-\frac12 G_n + H(G_n) + R_n\Bigr)+ \frac{1}{\sqrt{n+1}}U_{n+1}\Bigr\|^2 \mid \mathcal{F}_n \Big].
\end{align*}
Expanding the square and using the fact that $\E[U_{n+1}\mid\mathcal{F}_n]=0$, we obtain
\begin{align*}
    \E[\|G_{n+1}\|^2\mid\mathcal{F}_n]
&=\Bigl\|G_n + \frac{1}{n+1}\Bigl(-\frac12 G_n + H(G_n) + R_n\Bigr)\Bigr\|^2
+ \frac{1}{n+1}\E\Big[\|U_{n+1}\|^2 \mid \mathcal{F}_n \Big].
\end{align*}
Moreover,
$
\E[\|U_{n+1}\|^2\mid\mathcal F_n]
=
\tr(\Sigma)-n^{-1}\|H(G_n)\|^2
\le \tr(\Sigma).
$
Hence
\begin{align*}
\E[\|G_{n+1}\|^2\mid\mathcal{F}_n]
&\le \|G_{n}\|^2
+ \frac{2}{n+1}\langle G_n, -\tfrac12 G_n + H(G_n) +R_n\rangle
+ \frac{\tr(\Sigma)}{n+1} \\
&\quad
+ \frac{1}{(n+1)^2}\Bigl\|-\frac12 G_n + H(G_n) + R_n \Bigr\|^2.
\end{align*}
Note that there exists positive constants $c_1, c_2>0$ such that
\begin{align*}
    &\langle G_n, R_n \rangle
= O(n^{-1})(\|G_n\|^{2}+\|G_n\|^{\alpha+1})
\le \frac{c_1}{n}(1+\|G_{n}\|^2),\\
  & \Bigl\|-\frac12 G_n + H(G_n) + R_n \Bigr\|^2
\le c_2(1+\|G_{n}\|^2).
\end{align*}
Also, there exists a constant $c_3>0$ such that
$$
\langle x, -\tfrac12 x + H(x)\rangle
=
-\tfrac12\|x\|^2+\|x\|^{\alpha+1}
\le
-\tfrac14 V(x)+c_3
\quad\text{for all }x\in\mathbb R^d.
$$
Therefore,
\begin{align*}
\E[\|G_{n+1}\|^2\mid\mathcal{F}_n]
&\le
\|G_{n}\|^2
-\frac{1}{2(n+1)}\|G_{n}\|^2
+\frac{C_1}{n(n+1)}(1+\|G_{n}\|^2)\\
&\quad
+\frac{C_2}{n+1}
+\frac{C_3}{(n+1)^2}(1+\|G_{n}\|^2).
\end{align*}
Consequently, for all sufficiently large $n$,
$$
\E[\|G_{n+1}\|^2\mid\mathcal{F}_n]
\le
\left(1-\frac{c}{n}\right)\|G_{n}\|^2+\frac{C}{n}
$$
for some constants $c,C>0$. Taking expectations on both sides, we obtain
$$
\E[\|G_{n+1}\|^2]
\le
\left(1-\frac{c}{n}\right)\E[\|G_{n}\|^2]+\frac{C}{n}
$$
for all $n\ge n_0$, where $n_0$ is sufficiently large. By induction, one can show that $$\E[\|G_{n}\|^2]\le \max\left\{\frac{C}{c}, \E[\|G_{n_0}\|^2]\right\}$$ for all $n\ge n_0$.
This yields the claim of the lemma.
\end{proof}

For a fixed $T$, we denote by $D([0,T], \mathbb{R}^d)$ the Skorokhod space of all càdlàg functions from $[0,T]$ to $\R^d$.

\begin{lemma}\label{Aldous.tightness}
    Let $t_n:=\sum_{k=1}^n \frac1k$ and $m(n,T):=\max\{k\ge n:\ t_k-t_n\le T\}$. Fix $T>0$ and let
$$\bar G^{(n)}(t):=G_{m(n,t)},\quad 0\le t\le T.$$
Then the family $(\bar G^{(n)}(\cdot))_{n\ge1}$ is tight in $D([0,T],\R^d)$.
\end{lemma}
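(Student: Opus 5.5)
We verify Aldous' criterion for tightness in $D([0,T],\R^d)$. Two things are needed: (i) for each fixed $t\in[0,T]$, the family $(\bar G^{(n)}(t))_{n\ge1}$ is tight in $\R^d$; (ii) for every sequence of stopping times $\tau_n\le T$ (with respect to the filtration $\mathcal G^{(n)}_t:=\mathcal F_{m(n,t)}$) and every sequence $\delta_n\downarrow0$, we have $\bar G^{(n)}(\tau_n+\delta_n)-\bar G^{(n)}(\tau_n)\to0$ in probability. Part (i) is immediate. Lemma~\ref{lem:tightness} gives $\sup_n\E\|G_n\|^2<\infty$, and Markov's inequality then makes $\{G_k\}_{k\ge1}$ tight in $\R^d$, uniformly in the index.

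For (ii), sum the recursion \eqref{eq:Gn_recursion} between $\sigma_n:=m(n,\tau_n)$ and $\rho_n:=m(n,(\tau_n+\delta_n)\wedge T)$, which are $(\mathcal F_k)$-stopping times. This gives
$$G_{\rho_n}-G_{\sigma_n}=\sum_{k=\sigma_n}^{\rho_n-1}\frac{1}{k+1}\Bigl(F(G_k)+R_k\Bigr)+\sum_{k=\sigma_n}^{\rho_n-1}\frac{U_{k+1}}{\sqrt{k+1}}.$$
We treat the two sums separately.

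\emph{Drift sum.} Since $\|F(x)\|+\|R_k\|\le C(1+\|x\|)$, this sum is bounded by $C\sum_{k=\sigma_n}^{\rho_n-1}\frac{1+\|G_k\|}{k+1}$. By construction of $m(n,\cdot)$, $\sum_{k=\sigma_n}^{\rho_n-1}\frac1{k+1}\le\delta_n+\frac1n$. A pathwise bound $\sup_k\|G_k\|$ is not available, so we dominate the random window by a deterministic one. Write the sum over all $k\in[n,m(n,T)]$ with the indicator $\1_{\{\sigma_n\le k<\rho_n\}}$. Then use Cauchy–Schwarz against the weights $1/(k+1)$:
$$\E\sum_{k}\frac{\|G_k\|}{k+1}\1_{\{\sigma_n\le k<\rho_n\}}\le\Bigl(\E\sum_k\frac{\1_{\{\sigma_n\le k<\rho_n\}}}{k+1}\Bigr)^{1/2}\Bigl(\sum_{k=n}^{m(n,T)}\frac{\E\|G_k\|^2}{k+1}\Bigr)^{1/2}.$$
The first factor is at most $(\delta_n+1/n)^{1/2}$ and the second at most $(C(T+1))^{1/2}$ by Lemma~\ref{lem:tightness}. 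Hence the drift sum tends to $0$ in $L^1$.

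\emph{Martingale sum.} We use optional stopping and orthogonality of martingale increments:
$$\E\Bigl\|\sum_{k=\sigma_n}^{\rho_n-1}\frac{U_{k+1}}{\sqrt{k+1}}\Bigr\|^2=\E\sum_{k=\sigma_n}^{\rho_n-1}\frac{\E[\|U_{k+1}\|^2\mid\mathcal F_k]}{k+1}\le\tr(\Sigma)\,(\delta_n+1/n)\to0.$$
Here we used $\E[\|U_{k+1}\|^2\mid\mathcal F_k]\le\tr(\Sigma)$, as computed in the proof of Lemma~\ref{lem:tightness}. Both sums therefore vanish in probability, which verifies (ii).

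Aldous' criterion then gives tightness in $D([0,T],\R^d)$. The main technical point is the drift sum, because $G_k$ is not a.s. bounded on the window. The Cauchy–Schwarz reduction to the uniform second-moment bound of Lemma~\ref{lem:tightness} is what overcomes this. A secondary point needing care is that the time-change $m(n,\cdot)$ turns stopping times for $\mathcal G^{(n)}$ into $(\mathcal F_k)$-stopping times, which justifies the optional stopping step.
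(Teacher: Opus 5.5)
Your proof is correct. Its skeleton matches the paper's: Aldous' criterion, summing \eqref{eq:Gn_recursion} between the $(\mathcal F_k)$-stopping times $m(n,\tau_n)$ and $m(n,\tau_n+\theta)$, and bounding the martingale sum by orthogonality together with $\E[\|U_{k+1}\|^2\mid\mathcal F_k]\le\tr(\Sigma)$. The differences lie in the other two ingredients.

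For the first condition, the paper proves the stronger compact-containment bound $\sup_n\P(\sup_{t\le T}\|\bar G^{(n)}(t)\|>R)\to0$ as $R\to\infty$. It uses Doob's $L^2$ maximal inequality for the martingale part and an $L^1$ bound on the drift. You instead use the form of Aldous' criterion with tightness of one-dimensional marginals, which is immediate from Lemma~\ref{lem:tightness}. This is legitimate: the stopping-time condition controls the $w'$-modulus, and under that control, marginal tightness on a dense set of times containing $T$ is equivalent to compact containment. You rightly include $t=T$.

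For the drift sum, the paper localizes on $A_n=\{\sup_{t\le T}\|\bar G^{(n)}(t)\|\le R\}$ and bounds it pathwise by $C_R(\delta+1/n)$. Your Cauchy--Schwarz estimate against the weights $1/(k+1)$ controls it directly in $L^1$ at rate $O((\delta_n+1/n)^{1/2})$, uniformly over stopping times and with no localization.

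Your route is shorter. The paper's route produces the explicit compact-containment estimate that it reuses in Lemma~\ref{lem:critical_line_small_jumps}. That estimate can also be recovered a posteriori from tightness in $D([0,T],\R^d)$, since $x\mapsto\sup_{t\le T}\|x(t)\|$ is $J_1$-continuous.
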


\begin{proof}
To prove tightness in $D([0,T], \mathbb{R}^d)$, we verify Aldous' criterion (see \cite{A1978} or Theorem 16.10 in \cite{BP1999}, p. 178):
\begin{itemize}
\item[(i)] For every $\eps>0$ there exists $R<\infty$ such that
$$
\sup_{n\ge1}\P\Bigl(\sup_{0\le t\le T}\|\bar G^{(n)}(t)\|>R\Bigr)<\eps;
$$
\item[(ii)] Let $\mathcal G_t^{(n)}$ be the filtration generated by $\bar G^{(n)}(s)$ for $0\le s\le t$. For every $\eps,\eta>0$, there exists $\delta>0$ such that for every
sequence of $(\mathcal{G}_t^{(n)})$-stopping times $\tau_n\le T$ and every sequence $\theta_n$ with
$0\le \theta_n\le \delta$ and $\tau_n+\theta_n\le T$,
$$
\limsup_{n\to\infty}
\P\bigl(\|\bar G^{(n)}(\tau_n+\theta_n)-\bar G^{(n)}(\tau_n)\|>\eps\bigr)
\le \eta.
$$
\end{itemize}

Notice that
$$
\sup_{0\le t\le T}\|\bar G^{(n)}(t)\|
=
\max_{n\le \ell\le m(n,T)}\|G_\ell\|.
$$
From the recursion \eqref{eq:Gn_recursion},
for every $n\le \ell\le m(n,T)$, we have
\begin{align*}
G_\ell
=
G_n
+\sum_{j=n}^{\ell-1}\frac{1}{j+1}\Bigl(-\frac12 G_j+H(G_j)+R_j\Bigr)
+\sum_{j=n}^{\ell-1}\frac{U_{j+1}}{\sqrt{j+1}}.
\end{align*}
Hence
\begin{align*}
\max_{n\le \ell\le m(n,T)}\|G_\ell\|
&\le
\|G_n\|
+\sum_{j=n}^{m(n,T)-1}\frac{1}{j+1}
\Bigl\|-\frac12 G_j+H(G_j)+R_j\Bigr\| 
+\max_{n\le \ell\le m(n,T)}
\left\|
M_\ell^{(n)}
\right\|,
\end{align*}
where
$$
M_\ell^{(n)}
:=
\sum_{j=n}^{\ell-1}\frac{U_{j+1}}{\sqrt{j+1}},\quad n\le \ell\le m(n,T).
$$
Then $(M_\ell^{(n)})_{n\le \ell\le m(n,T)}$ is a martingale with respect to $(\mathcal F_\ell)_{\ell\ge n}$. By Doob's
$L^2$ inequality,
\begin{align*}
\E\left[
\max_{n\le \ell\le m(n,T)}\|M_\ell^{(n)}\|^2
\right]
&\le
4\E\left[
\sum_{j=n}^{m(n,T)-1}\frac{\|U_{j+1}\|^2}{j+1}
\right].
\end{align*}
Notice that
\begin{align}\label{eq.U.cond}
    \E[\|U_{j+1}\|^2\mid\mathcal F_j]
=
\operatorname{tr}\!\left(
\E[U_{j+1}U_{j+1}^\top\mid\mathcal F_j]
\right)
=
\operatorname{tr}(\Sigma-j^{-1}H(G_j)H(G_j)^\top)
\le \operatorname{tr}(\Sigma).
\end{align}
Hence
\begin{align*}
\E\left[
\max_{n\le \ell\le m(n,T)}\|M_\ell^{(n)}\|^2
\right]
&\le
4\,\operatorname{tr}(\Sigma)
\sum_{j=n}^{m(n,T)-1}\frac1{j+1}
\le 4\,\operatorname{tr}(\Sigma)\,T.
\end{align*}
Therefore
$$
\sup_{n\ge1}
\E\left[
\max_{n\le \ell\le m(n,T)}\|M_\ell^{(n)}\|
\right]
<\infty.
$$
We next bound the drift term. Using the fact that
$\|H(x)\|=\|x\|^\alpha\le 1+\|x\|$,
and
$\|R_j\|\le C j^{-1}(\|G_j\|+\|G_j\|^\alpha)$, together with
Lemma~\ref{lem:tightness}, we have
\begin{align}\label{drift.bound}
    \sup_{j\ge1}\E\left[\left\|-\frac12 G_j+H(G_j)+R_j\right\|\right]\le C_2.
\end{align}
for some constant $C_2>0$.
Therefore
\begin{align*}
\sup_{n\ge1}
\E\left[
\sum_{j=n}^{m(n,T)-1}\frac{1}{j+1}
\Bigl\|-\frac12 G_j+H(G_j)+R_j\Bigr\|
\right]
&\le
C_2 \sup_{n\ge1}\sum_{j=n}^{m(n,T)-1}\frac1{j+1}
\le C_2 T.
\end{align*} 

Combining the previous bounds with Lemma~\ref{lem:tightness}, we get
$$
\sup_{n\ge1}
\E\left[
\sup_{0\le t\le T}\|\bar G^{(n)}(t)\|
\right]
<\infty.
$$
Using Markov's inequality, we obtain
$$
\sup_{n\ge1}\P\Bigl(\sup_{0\le t\le T}\|\bar G^{(n)}(t)\|>R\Bigr)
\le
\frac{1}{R}
\sup_{n\ge1}
\E\left[
\sup_{0\le t\le T}\|\bar G^{(n)}(t)\|
\right]
\to0
\quad\text{as } R\to\infty.
$$
Thus condition (i) holds.

Let $(\tau_n)$ be a sequence of $(\mathcal G_t^{(n)})$-stopping times with
$\tau_n\le T$, and let
$(\theta_n)$ satisfy $0\le\theta_n\le\delta$ and $\tau_n+\theta_n\le T$ for some $\delta>0$.
Define $k_n:=m(n,\tau_n)$ and 
$\ell_n:=m(n,\tau_n+\theta_n).$
Then
$$
\bar G^{(n)}(\tau_n+\theta_n)-\bar G^{(n)}(\tau_n)
=
G_{\ell_n}-G_{k_n}.
$$
The random indices $k_n$ and $\ell_n$ are discrete stopping times with respect to $(\mathcal F_j)_{j\ge n}$. Hence
$\{k_n\le j<\ell_n\}\in\mathcal F_j$ for every $j\ge n$.
From the recursion \eqref{eq:Gn_recursion}, we have
\begin{align} 
  G_{\ell_n}-G_{k_n}
&=
\sum_{j=k_n}^{\ell_n-1}\frac{1}{j+1}\Bigl(-\frac12 G_j+H(G_j)+R_j\Bigr)
+
\sum_{j=k_n}^{\ell_n-1}\frac{U_{j+1}}{\sqrt{j+1}}.
\label{eq:stopping_decomp}
\end{align}
By definition of $m(n,\cdot)$,
\begin{align}
\sum_{j=k_n}^{\ell_n-1}\frac1{j+1}
=
t_{\ell_n}-t_{k_n}
\le
(\tau_n+\theta_n)-\tau_n+\frac{1}{k_n+1}
\le
\delta+\frac1n.
\label{eq:harmonic_window}
\end{align}
Fix $\eps,\eta>0$. By condition (i), choose $R<\infty$ such that
$$
\sup_{n\ge1}\P\left(A_n^c\right)<\frac{\eta}{4} \quad\text{with}\quad
A_n:=\left\{\sup_{0\le t\le T}\|\bar G^{(n)}(t)\|\le R\right\}.
$$
On $A_n$, we have $\|G_j\|\le R$ for every $k_n\le j\le \ell_n-1$, and therefore
there exists a constant $C_R>0$ such that
$$
\Bigl\|-\frac12 G_j+H(G_j)+R_j\Bigr\|
\le C_R
\quad\text{for all }k_n\le j\le \ell_n-1.
$$
Hence, on $A_n$, by \eqref{eq:harmonic_window}, we have
$$
\left\|
\sum_{j=k_n}^{\ell_n-1}\frac{1}{j+1}\Bigl(-\frac12 G_j+H(G_j)+R_j\Bigr)
\right\|
\le
C_R\left(\delta+\frac1n\right).
$$
Choose $\delta>0$ sufficiently small such that $C_R\delta<\eps/2$. Then for all sufficiently large $n$,
\begin{align}
\P\left(
\left\|
\sum_{j=k_n}^{\ell_n-1}\frac{1}{j+1}\Bigl(-\frac12 G_j+H(G_j)+R_j\Bigr)
\right\|>\frac{\eps}{2}
\right)
\le \P(A_n^c)
<\frac{\eta}{4}.
\label{eq:drift_window_prob}
\end{align}
Using the fact that $(U_j)$ is a martingale difference sequence and
$\{k_n\le j<\ell_n\}\in \mathcal{F}_j$ together with \eqref{eq.U.cond} and \eqref{eq:harmonic_window}, we have
\begin{align*}
\E\left\|
\sum_{j=k_n}^{\ell_n-1}\frac{U_{j+1}}{\sqrt{j+1}}
\right\|^2
&= \E\left\| \sum_{j=n}^{m(n,T)-1}\mathbf 1_{\{k_n\le j<\ell_n\}}\frac{U_{j+1}}{\sqrt{j+1}}\right\|^2
\\
& =
\E\left[
\sum_{j=k_n}^{\ell_n-1}\frac{1}{j+1}
\E\bigl[\|U_{j+1}\|^2\mid\mathcal F_j\bigr]
\right] 
\le
\tr(\Sigma)\left(\delta+\frac1n\right). 
\end{align*}
Therefore, by Chebyshev's inequality, we get
\begin{align}
    \limsup_{n\to\infty}
\P\left(
\left\|
\sum_{j=k_n}^{\ell_n-1}\frac{U_{j+1}}{\sqrt{j+1}}
\right\|>\frac{\eps}{2}
\right)
\le
\frac{4\,\tr(\Sigma)}{\eps^2}\,\delta.
\label{eq:mart_window_prob}
\end{align}
Combining \eqref{eq:stopping_decomp}, \eqref{eq:drift_window_prob}, and
\eqref{eq:mart_window_prob}, we obtain
$$
\limsup_{n\to\infty}
\P\bigl(\|\bar G^{(n)}(\tau_n+\theta_n)-\bar G^{(n)}(\tau_n)\|>\eps\bigr)
\le
\frac{\eta}{4}+\frac{4\tr(\Sigma)}{\eps^2}\delta.
$$
Choosing $\delta>0$ sufficiently small, the right-hand side is bounded by $\eta$. Thus, condition (ii) holds. Therefore $(\bar G^{(n)})_{n\ge1}$ is tight in
$D([0,T],\R^d)$.
\end{proof}

\begin{lemma}\label{lem:critical_line_small_jumps}
For every $T>0$,
$$
\sup_{0\le t\le T}\|\Delta \bar G^{(n)}(t)\|
\xrightarrow[n\to\infty]{\P}0,
$$
where $\Delta \bar G^{(n)}(t):=\bar G^{(n)}(t)-\bar G^{(n)}(t-)$.
\end{lemma}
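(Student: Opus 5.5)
The jumps of $\bar G^{(n)}$ on $[0,T]$ are exactly the increments $G_{j+1}-G_j$ for $n\le j<m(n,T)$. The plan is therefore to bound
$$
\max_{n\le j<m(n,T)}\|G_{j+1}-G_j\|
$$
in probability, using the recursion \eqref{eq:Gn_recursion}, which splits each increment into a drift part and a noise part:
$$
\|G_{j+1}-G_j\|\le \frac{1}{j+1}\Bigl\|-\tfrac12 G_j+H(G_j)+R_j\Bigr\|+\frac{\|U_{j+1}\|}{\sqrt{j+1}}.
$$

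\emph{Drift part.} Fix $\eps,\eta>0$. By condition (i) established in the proof of Lemma~\ref{Aldous.tightness}, choose $R$ such that $\P(A_n^c)<\eta$ uniformly in $n$, where $A_n=\{\max_{n\le j\le m(n,T)}\|G_j\|\le R\}$. On $A_n$ the bracket is bounded by a constant $C_R$. Hence the drift contribution is at most $C_R/(n+1)$, which tends to $0$ deterministically.

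\emph{Noise part.} This is the only step that needs care, since $U_{j+1}$ is not bounded.
\begin{itemize}
\item First, $\sup_k\E\|U_k\|^p<\infty$ with $p>2$. This follows as in the proof of part a: $\|U_{j+1}\|\le \|X_{j+1}\|+j^{-\beta}\|S_j\|^\alpha$, and $\E\|S_j\|^{\alpha p}\le C j^{\alpha p}$ by \eqref{inq.Lp}, while here $\beta=(\alpha+1)/2>\alpha$.
\item Next, a union bound together with Markov's inequality of order $p$ gives
$$
\P\Bigl(\max_{n\le j<m(n,T)}\frac{\|U_{j+1}\|}{\sqrt{j+1}}>\eps\Bigr)\le \sum_{j=n}^{m(n,T)-1}\frac{\E\|U_{j+1}\|^p}{\eps^p (j+1)^{p/2}}.
$$
\item Finally, since $t_{m(n,T)}-t_n\le T$, we have $m(n,T)\le C_T n$. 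The right-hand side is therefore at most $C\eps^{-p}\, n\cdot n^{-p/2}$, which tends to $0$ because $p>2$.
\end{itemize}

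Combining the two parts,
$$
\limsup_{n\to\infty}\P\Bigl(\sup_{0\le t\le T}\|\Delta\bar G^{(n)}(t)\|>2\eps\Bigr)\le\eta.
$$
Since $\eta$ is arbitrary, this gives the claim.
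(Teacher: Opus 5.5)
Your proof is correct and follows the same route as the paper. You localize on the event $A_n$ from condition (i) of Lemma~\ref{Aldous.tightness}, which makes the drift contribution $O(1/n)$, and you control the noise by a union bound and Markov's inequality of order $p>2$. There are two minor differences. The paper absorbs the compensator $H(G_j)/\sqrt{j(j+1)}$ into the bounded drift on $A_n$, so it applies Markov directly to $X_{j+1}$ and avoids your separate bound $\sup_k\E\|U_k\|^p<\infty$. It also bounds the sum by the tail $\sum_{j\ge n}(j+1)^{-p/2}$, rather than by counting $m(n,T)\le C_T n$ terms.
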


\begin{proof}
Note that the nonzero jumps of $\bar G^{(n)}$ on $[0,T]$ are among the increments
$G_{j+1}-G_j$ for $n\le j\le m(n,T)-1$.
Fix $\eps,\eta>0$. By condition~(i) in the proof of Lemma~\ref{Aldous.tightness},
there exists $R<\infty$ such that
\begin{equation}\label{eq:compact_event_smalljumps}
\sup_{n\ge1}\P\!\left(A_n^c\right)<\frac{\eta}{2}\quad\text{with}\quad A_n:=\Big\{\sup_{0\le t\le T}\|\bar G^{(n)}(t)\|\le R\Big\}.
\end{equation}
On $A_n$, we have $\|G_j\|\le R$ for all $n\le j\le m(n,T)$. Recall from \eqref{eq:Gn_recursion} that
$$
G_{j+1}-G_j
=
\frac{1}{j+1}\Bigl(-\frac12 G_j+H(G_j)+R_j\Bigr)
+\frac{U_{j+1}}{\sqrt{j+1}},
$$
where
$$
U_{j+1}=X_{j+1}-\frac{H(S_j)}{j^{(\alpha+1)/2}}
      =X_{j+1}-\frac{H(G_j)}{\sqrt j}.
$$
Since $\|H(x)\|=\|x\|^\alpha$ and $0<\alpha<1$, on $A_n$ we have
$\|H(G_j)\|\le R^\alpha$ for all $n\le j\le m(n,T)-1$. Moreover,
$\|R_j\|\le C j^{-1}(\|G_j\|+\|G_j\|^\alpha)\le C_R j^{-1}$ on $A_n$.
Therefore there exists a constant $C_R>0$ such that, on $A_n$,
$$
\|G_{j+1}-G_j\|
\le
\frac{\|X_{j+1}\|}{\sqrt{j+1}}+\frac{C_R}{j+1}
\quad\text{for all }n\le j\le m(n,T)-1.
$$
Choose $N_1$ sufficiently large such that
$
\frac{C_R}{n+1}\le \frac{\eps}{2}
$
for all $n\ge N_1$. Then, for $n\ge N_1$, on the event $A_n$,
\begin{align*}
\sup_{0\le t\le T}\|\Delta \bar G^{(n)}(t)\|
\le
\max_{n\le j\le m(n,T)-1}\|G_{j+1}-G_j\|
\le
\max_{n\le j\le m(n,T)-1}\frac{\|X_{j+1}\|}{\sqrt{j+1}}+\frac{\eps}{2}.
\end{align*}
Hence, for all $n\ge N_1$,
\begin{align}
\P\!\left(\sup_{0\le t\le T}\|\Delta \bar G^{(n)}(t)\|>\eps\right)
&\le
\P(A_n^c)
+\P\!\left(
\max_{n\le j\le m(n,T)-1}\frac{\|X_{j+1}\|}{\sqrt{j+1}}>\frac{\eps}{2}
\right).
\label{eq:split_smalljumps}
\end{align}
It remains to control the second term. By the union bound and Markov's inequality,
\begin{align*}
\P\!\left(
\max_{n\le j\le m(n,T)-1}\frac{\|X_{j+1}\|}{\sqrt{j+1}}>\frac{\eps}{2}
\right)
&\le
\sum_{j=n}^{m(n,T)-1}
\P\!\left(\|X_{j+1}\|>\frac{\eps}{2}\sqrt{j+1}\right) \\
&\le
\sum_{j=n}^{m(n,T)-1}
\frac{2^p}{\eps^p (j+1)^{p/2}}\E\|X_{j+1}\|^p.
\end{align*}
Since $\sup_{k\ge1}\E\|X_k\|^p<\infty$ by Assumption~\ref{assump},
there exists a constant $C_\eps>0$ such that
\begin{align}
\P\!\left(
\max_{n\le j\le m(n,T)-1}\frac{\|X_{j+1}\|}{\sqrt{j+1}}>\frac{\eps}{2}
\right)
\le
C_\eps \sum_{j=n}^{\infty}(j+1)^{-p/2}.
\label{eq:X_tail_smalljumps}
\end{align}
As $p>2$, the series on the right-hand side tends to $0$ as
$n\to\infty$.
Combining \eqref{eq:split_smalljumps} together with \eqref{eq:compact_event_smalljumps}
and \eqref{eq:X_tail_smalljumps}, we obtain
$$
\limsup_{n\to\infty}
\P\!\left(\sup_{0\le t\le T}\|\Delta \bar G^{(n)}(t)\|>\eps\right)
\le
\frac{\eta}{2}.
$$
Since $\eta>0$ is arbitrary, it follows that
$
\sup_{0\le t\le T}\|\Delta \bar G^{(n)}(t)\|
\xrightarrow[n\to\infty]{\P}0.
$
This completes the proof.
\end{proof}

We now identify the weak limits of $(\bar G^{(n)})$ by the martingale problem
method. Define the second-order differential operator
\begin{align}\label{L.df}
    \mathcal{L}\varphi(x)
:=
\nabla\varphi(x)\cdot\Bigl(-\frac12 x+H(x)\Bigr)
+
\frac12\tr\!\bigl(\Sigma \nabla^2\varphi(x)\bigr)
\text{ for each }
\varphi\in C_c^2(\R^d).
\end{align}

\begin{lemma}\label{lem:critical_line_generator}
Let $\varphi\in C_c^2(\R^d)$ and $T>0$. Then
$$
\sup_{0\le t\le T}
\left|
\sum_{j=n}^{m(n,t)-1}
\E\bigl[\varphi(G_{j+1})-\varphi(G_j)\mid\mathcal F_j\bigr]
-
\int_0^t \mathcal{L}\varphi(\bar G^{(n)}(s))\rmd s
\right|
\xrightarrow[n\to\infty]{L^1}0.
$$
\end{lemma}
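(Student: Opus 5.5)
We plan to Taylor expand $\varphi$ to second order at $G_j$. By \eqref{eq:Gn_recursion}, write $\Delta_j:=G_{j+1}-G_j=\frac{1}{j+1}F_j+\frac{U_{j+1}}{\sqrt{j+1}}$, where $F_j:=-\frac12G_j+H(G_j)+R_j$. Then
$$
\varphi(G_{j+1})-\varphi(G_j)=\nabla\varphi(G_j)\cdot\Delta_j+\tfrac12\Delta_j^\top\nabla^2\varphi(G_j)\Delta_j+r_j ,
$$
with $|r_j|\le \omega(\|\Delta_j\|)\|\Delta_j\|^2$. Here $\omega$ is the modulus of continuity of $\nabla^2\varphi$; it is bounded and tends to $0$ at $0$, because $\varphi\in C_c^2$.

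Taking conditional expectations, the martingale term disappears. Using $\E[U_{j+1}U_{j+1}^\top\mid\mathcal F_j]=\Sigma-j^{-1}H(G_j)H(G_j)^\top$, we get
$$
\E[\varphi(G_{j+1})-\varphi(G_j)\mid\mathcal F_j]=\frac{1}{j+1}\mathcal L\varphi(G_j)+e_j ,
$$
where
$$
e_j=\frac{\nabla\varphi(G_j)\cdot R_j}{j+1}-\frac{\tr(\nabla^2\varphi(G_j)H(G_j)H(G_j)^\top)}{2j(j+1)}+\frac{F_j^\top\nabla^2\varphi(G_j)F_j}{2(j+1)^2}+\E[r_j\mid\mathcal F_j] .
$$
Two facts make the first three terms easy. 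First, $\nabla\varphi$ and $\nabla^2\varphi$ vanish outside a compact set $K$. Second, on $K$ the quantities $\|G_j\|$, $\|H(G_j)\|$ and $\|F_j\|$ are bounded, since $\|R_j\|\le Cj^{-1}(\|G_j\|+\|G_j\|^\alpha)$. So these three terms are $O(j^{-2})$ deterministically, and their sum over $j\ge n$ tends to $0$.

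The \emph{remainder} $r_j$ is the main obstacle, because only second moments and uniform $p$-th moments of $X$ are available. We bound $\omega(\|\Delta_j\|)\|\Delta_j\|^2$ by splitting on $\{\|\Delta_j\|\le\eta\}$. On that event the bound is $\omega(\eta)\|\Delta_j\|^2$. On its complement we use $\|\nabla^2\varphi\|_\infty$ together with $\|\Delta_j\|^2\le \|\Delta_j\|^{p}\eta^{2-p}$.

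This gives
$$
\E|r_j|\le \omega(\eta)\E\|\Delta_j\|^2+C\eta^{2-p}\E\|\Delta_j\|^p .
$$
Since $\varphi$ is compactly supported, $r_j=0$ unless $G_j$ or $G_{j+1}$ lies in $K$, but we will not need this. Instead we bound directly $\E\|\Delta_j\|^2\le C/(j+1)$, using Lemma~\ref{lem:tightness} for $F_j$ and \eqref{eq.U.cond}. For the $p$-th moment we use $\E\|\Delta_j\|^p\le C(j+1)^{-p/2}$. This follows from $\sup_k\E\|U_k\|^p<\infty$, which holds as in part (a) of the proof because $\alpha\le\beta$, together with $\E\|F_j\|^p\le C j^{p}$ via \eqref{inq.Lp}; note $(j+1)^{-p}j^p$ is harmless and is in fact divided by $(j+1)^p$.

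Summing over $n\le j<m(n,T)$ and using $\sum 1/(j+1)\le T+1$ gives
$$
\sum_j\E|r_j|\le C\omega(\eta)(T+1)+C\eta^{2-p}\sum_{j\ge n}j^{-p/2} .
$$
Letting $n\to\infty$ and then $\eta\to0$ makes this vanish. Since the bounds are uniform in $t\le T$, we pass the supremum inside via $\sup_t|\sum_{j<m(n,t)}e_j|\le\sum_{j<m(n,T)}|e_j|$.

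Finally we compare the Riemann sum with the integral. Because $\bar G^{(n)}$ is piecewise constant, equal to $G_j$ on $[t_j-t_n,t_{j+1}-t_n)$, we have $\int_0^t\mathcal L\varphi(\bar G^{(n)}(s))\rmd s=\sum_{j=n}^{m(n,t)-1}\frac{1}{j+1}\mathcal L\varphi(G_j)$ up to a boundary piece of length at most $1/(n+1)$. The function $\mathcal L\varphi$ is bounded: $H$ is continuous and $\nabla\varphi$ has compact support. Hence the boundary error is at most $\|\mathcal L\varphi\|_\infty/(n+1)$, uniformly in $t$. Combining this with the estimates above gives the $L^1$ convergence claimed in Lemma~\ref{lem:critical_line_generator}.
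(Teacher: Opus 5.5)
Your proposal is correct and follows the paper's proof essentially step for step. It uses the same second-order Taylor expansion, the same conditional expectation isolating $\frac{1}{j+1}\mathcal L\varphi(G_j)$, the same truncation of the remainder at level $\eta$ (first $n\to\infty$, then $\eta\downarrow0$), and the same Riemann-sum comparison with boundary error $\|\mathcal L\varphi\|_\infty/(n+1)$. There are two minor variations, both valid:
\begin{itemize}
\item You bound the $R_j$, $H(G_j)H(G_j)^\top$ and $F_jF_j^\top$ error terms deterministically by $O(j^{-2})$ using the compact support of $\varphi$; the paper bounds them in expectation via Lemma~\ref{lem:tightness}.
\item You control $\E[\|\Delta_j\|^2\mathbf 1_{\{\|\Delta_j\|>\eta\}}]$ through $\eta^{2-p}\E\|\Delta_j\|^p$. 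The paper instead splits $\Delta_{j+1}=A_j+B_j$, so that the drift part needs only second moments and $p$-th moments are used only for $X_{j+1}$.
\end{itemize}

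There is one slip in your $p$-th moment step. The bound $\E\|F_j\|^p\le Cj^p$ you quote only gives $(j+1)^{-p}\E\|F_j\|^p=O(1)$. That does not yield $\E\|\Delta_j\|^p\le C(j+1)^{-p/2}$, and your remark that this factor is ``harmless'' is wrong as written. The fix uses the same ingredient correctly:
\begin{itemize}
\item Since $\|F_j\|\le C(1+\|G_j\|)$ and $G_j=S_j/\sqrt j$, \eqref{inq.Lp} gives $\E\|F_j\|^p\le C(1+j^{-p/2}\E\|S_j\|^p)\le Cj^{p/2}$.
\item Hence $(j+1)^{-p}\E\|F_j\|^p\le C(j+1)^{-p/2}$.
\end{itemize}
Combined with $\sup_k\E\|U_k\|^p<\infty$, this gives the bound you need, and the rest of your argument goes through.
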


\begin{proof}
    Fix $\varphi\in C_c^2(\R^d)$. For each $j\ge1$, let $\Delta_{j+1}:=G_{j+1}-G_j.$
Using Taylor's expansion, we have
\begin{align}
\varphi(G_{j+1})-\varphi(G_j)
=
\nabla\varphi(G_j)\cdot \Delta_{j+1}
+\frac12 \Delta_{j+1}^\top \nabla^2\varphi(G_j)\Delta_{j+1}
+r_{j+1},
\label{eq:taylor_phi}
\end{align}
where
$$
r_{j+1}
=
\frac12\int_0^1(1-\theta)\,
\Delta_{j+1}^\top
\Bigl(\nabla^2\varphi(G_j+\theta\Delta_{j+1})-\nabla^2\varphi(G_j)\Bigr)
\Delta_{j+1}\rmd \theta.
$$
Let
$$
\omega_\varphi(\eta)
:=
\sup_{\|x-y\|\le \eta}\|\nabla^2\varphi(x)-\nabla^2\varphi(y)\|,
\quad \eta>0.
$$
Then $\omega_\varphi(\eta)\to0$ as $\eta\downarrow0$, and
\begin{align}
|r_{j+1}|
\le
\frac12\,\omega_\varphi(\|\Delta_{j+1}\|)\,\|\Delta_{j+1}\|^2.
\label{eq:r_bound}
\end{align}

Recall from \eqref{eq:Gn_recursion} that
$$
\Delta_{j+1} =
\frac{1}{j+1}\Bigl(-\frac12 G_j+H(G_j)+R_j\Bigr)
+\frac{U_{j+1}}{\sqrt{j+1}},
$$
with
$$
U_{j+1}
=
X_{j+1}-\frac{H(S_j)}{j^{(\alpha+1)/2}}
=
X_{j+1}-\frac{H(G_j)}{\sqrt j}.
$$
Write $\Delta_{j+1}=A_j+B_j$, where
$$
A_j:=
\frac{1}{j+1}\Bigl(-\frac12 G_j+H(G_j)+R_j\Bigr)
-\frac{H(G_j)}{\sqrt{j(j+1)}},
\quad
B_j:=\frac{X_{j+1}}{\sqrt{j+1}}.
$$
Since $\|H(x)\|=\|x\|^\alpha\le 1+\|x\|$ and
$\|R_j\|\le Cj^{-1}(\|G_j\|+\|G_j\|^\alpha)$, we have
$
\|A_j\|\le \frac{C}{j+1}(1+\|G_j\|).
$
Hence, by Lemma~\ref{lem:tightness},
\begin{align}\label{Aj.bound}
  \E\|A_j\|^2\le \frac{C}{(j+1)^2}.  
\end{align}
Moreover, by Assumption~\ref{assump},
$
\E\|B_j\|^2
=
\frac{1}{j+1}\E\|X_{j+1}\|^2
\le \frac{C}{j+1}.
$
Therefore
\begin{equation}\label{eq:Delta_second}
\E\|\Delta_{j+1}\|^2\le \frac{C}{j+1}.
\end{equation}

We claim that for every $\eta>0$,
\begin{equation}\label{eq:Delta_trunc}
\lim_{n\to\infty}\sum_{j=n}^{\infty}
\E\!\left[\|\Delta_{j+1}\|^2
\mathbf 1_{\{\|\Delta_{j+1}\|>\eta\}}\right]=0.
\end{equation}
Indeed, for every $\eta>0$,
$$
\|\Delta_{j+1}\|^2\mathbf 1_{\{\|\Delta_{j+1}\|>\eta\}}
\le
4\|A_j\|^2\mathbf 1_{\{\|A_j\|>\eta/4\}}
+
4\|B_j\|^2\mathbf 1_{\{\|B_j\|>\eta/4\}}.
$$
Using \eqref{Aj.bound}, we notice that
$$
\sum_{j=n}^{\infty}
\E\!\left[\|A_j\|^2\mathbf 1_{\{\|A_j\|>\eta/4\}}\right]
\le
\sum_{j=n}^{\infty}\E\|A_j\|^2
\le
C\sum_{j=n}^{\infty}(j+1)^{-2}\to0.
$$
On the other hand, by Assumption~\ref{assump},
$
\E\!\left[\|B_j\|^2\mathbf 1_{\{\|B_j\|>\eta/4\}}\right]
\le
C_\eta (j+1)^{-p/2}.
$
Since $p>2$, we have
$$
\sum_{j=n}^{\infty}
\E\!\left[\|B_j\|^2\mathbf 1_{\{\|B_j\|>\eta/4\}}\right]
\le
C_\eta\sum_{j=n}^{\infty}(j+1)^{-p/2}\to0.
$$
This proves \eqref{eq:Delta_trunc}.

Now fix $\eta>0$. By \eqref{eq:r_bound},
$
|r_{j+1}|
\le
\frac12\,\omega_\varphi(\eta)\,\|\Delta_{j+1}\|^2
+\|\nabla^2\varphi\|_\infty
\|\Delta_{j+1}\|^2\mathbf 1_{\{\|\Delta_{j+1}\|>\eta\}}.
$
Using \eqref{eq:Delta_second}, \eqref{eq:Delta_trunc}, and
$\sum_{j=n}^{m(n,T)-1}(j+1)^{-1}\le T$, we obtain
\begin{equation}\label{eq:r_sum}
\sum_{j=n}^{m(n,T)-1}\E|r_{j+1}|
\le
C\,T\,\omega_\varphi(\eta)+o_n(1).
\end{equation}
Letting first $n\to\infty$ and then $\eta\downarrow0$, we conclude that
\begin{equation}\label{eq:r_sum_vanish}
\lim_{n\to\infty}\sum_{j=n}^{m(n,T)-1}\E|r_{j+1}|=0.
\end{equation}

We next compute the conditional expectation of the first-order term in
\eqref{eq:taylor_phi}. Since $\E[U_{j+1}\mid\mathcal F_j]=0$,
$$
\E[\Delta_{j+1}\mid\mathcal F_j]
=
\frac{1}{j+1}\Bigl(-\frac12 G_j+H(G_j)+R_j\Bigr).
$$
Hence
$$
\E[\nabla\varphi(G_j)\cdot \Delta_{j+1}\mid\mathcal F_j]
=
\frac{1}{j+1}\nabla\varphi(G_j)\cdot\Bigl(-\frac12 G_j+H(G_j)\Bigr)
+
\frac{1}{j+1}\nabla\varphi(G_j)\cdot R_j.
$$
Since $\nabla\varphi$ is bounded and
$
\|R_j\|\le \frac{C}{j}\bigl(\|G_j\|+\|G_j\|^\alpha\bigr)$,
using Lemma~\ref{lem:tightness}, we obtain that
\begin{align}
\sum_{j=n}^{m(n,T)-1}
\E\left[
\left|
\frac{1}{j+1}\nabla\varphi(G_j)\cdot R_j
\right|
\right]
\le
C\sum_{j=n}^{\infty}\frac{1}{j(j+1)}
\Bigl(\E\|G_j\|+\E\|G_j\|^\alpha\Bigr)
\to0.
\label{eq:first_order_error}
\end{align}

We now treat the second-order term. By \eqref{eq:Gn_recursion},
$$
\Delta_{j+1}
=
a_j+b_j,
\quad\text{with}\quad
a_j:=\frac{1}{j+1}\Bigl(-\frac12 G_j+H(G_j)+R_j\Bigr)
\quad\text{and}\quad
b_j:=\frac{U_{j+1}}{\sqrt{j+1}}.
$$
Then
$\Delta_{j+1}\Delta_{j+1}^\top
=
a_j a_j^\top + a_j b_j^\top + b_j a_j^\top + b_j b_j^\top.$ Taking conditional expectation and using $\E[b_j\mid\mathcal F_j]=0$, we get
$$
\E[\Delta_{j+1}\Delta_{j+1}^\top\mid\mathcal F_j]
=
a_j a_j^\top + \E[b_j b_j^\top\mid\mathcal F_j].
$$
Note that
$$
\E[b_j b_j^\top\mid\mathcal F_j]
=
\frac{1}{j+1}\Sigma
-
\frac{1}{j(j+1)}H(G_j)H(G_j)^\top.
$$
Therefore
\begin{align*}
\E\left[\frac12\Delta_{j+1}^\top\nabla^2\varphi(G_j)\Delta_{j+1}\,\middle|\,\mathcal F_j\right]
&=
\frac{1}{2(j+1)}\tr\!\bigl(\Sigma\nabla^2\varphi(G_j)\bigr)
+\varepsilon_{j+1}^{(2)},
\end{align*}
where
$$
\varepsilon_{j+1}^{(2)}
=
-\frac{1}{2j(j+1)}
\tr\!\bigl(H(G_j)H(G_j)^\top \nabla^2\varphi(G_j)\bigr)
+
\frac12\tr\!\bigl(a_j a_j^\top\nabla^2\varphi(G_j)\bigr).
$$
Since $\nabla^2\varphi$ is bounded, $\|H(G_j)\|^2=\|G_j\|^{2\alpha}$, and
$\sup_j \E\|G_j\|^{2\alpha}<\infty$ by Lemma~\ref{lem:tightness},
we get
\begin{align}
\sum_{j=n}^{m(n,T)-1}\E|\varepsilon_{j+1}^{(2)}|
\le
C\sum_{j=n}^{\infty}\frac{1}{j^2}
+
C\sum_{j=n}^{\infty}\frac{1}{(j+1)^2}\E\Bigl\|-\frac12 G_j+H(G_j)+R_j\Bigr\|^2
\to0.
\label{eq:second_order_error}
\end{align}
Combining \eqref{eq:taylor_phi}, \eqref{eq:r_sum_vanish}, \eqref{eq:first_order_error},
and \eqref{eq:second_order_error}, we conclude that
$$
\E[\varphi(G_{j+1})-\varphi(G_j)\mid\mathcal F_j]
=
\frac{1}{j+1}\mathcal{L}\varphi(G_j)+\eta_{j+1},
$$
where
\begin{align*}
\lim_{n\to\infty}\sum_{j=n}^{m(n,T)-1}\E|\eta_{j+1}|=0.
\end{align*}
Consequently,
\begin{align} \label{eq:eta_sum}
    \E\left[
\sup_{0\le t\le T}
\left|
\sum_{j=n}^{m(n,t)-1}\eta_{j+1}
\right|
\right]
\le
\sum_{j=n}^{m(n,T)-1}\E|\eta_{j+1}|
\to0.
\end{align}

It remains to compare the Riemann sum with the integral term. Since
$\bar G^{(n)}(s)=G_j$ on $[t_j-t_n,\ t_{j+1}-t_n)$, we have
$$
\int_0^t \mathcal{L}\varphi(\bar G^{(n)}(s))\rmd s
=
\sum_{j=n}^{m(n,t)-1}\frac{1}{j+1}\mathcal{L}\varphi(G_j)+\rho_n(t),
$$
where
$$
|\rho_n(t)|
\le
\|\mathcal{L}\varphi\|_\infty \sup_{j\ge n}\frac{1}{j+1}
\le \frac{\|\mathcal{L}\varphi\|_\infty}{n+1}.
$$
Hence
\begin{align}
\lim_{n\to\infty}\sup_{0\le t\le T}|\rho_n(t)|=0.
\label{eq:Riemann_error}
\end{align}
Combining \eqref{eq:eta_sum} with \eqref{eq:Riemann_error}, we obtain the result of the lemma.
\end{proof}

We can now identify the weak limits of $(\bar G^{(n)})$.

\begin{lemma}\label{lem:critical_line_finite_horizon}
Fix $T>0$. Let $(n_k)$ be a subsequence such that $G_{n_k}$ converges weakly to $\nu$ for some probability measure $\nu$ on $\R^d$.
Then
$\bar G^{(n_k)}$ converges weakly to
$X^\nu$ in  $D([0,T],\R^d)$ as $k\to\infty$,
where $(X_t^\nu)_{0\le t\le T}$ is the unique weak solution of
\begin{equation}\label{eq:critical_line_limit_SDE}
\rmd X_t
=
\Bigl(-\frac12X_t+H(X_t)\Bigr)\rmd t+\Sigma^{1/2}\rmd B_t,
\quad X_0\sim \nu.
\end{equation}
\end{lemma}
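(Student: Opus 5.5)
The proof will be the standard martingale-problem argument: tightness gives subsequential limits, and the limit is identified through the martingale problem for $\mathcal L$.

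\emph{Step 1: subsequential limits.} By Lemma~\ref{Aldous.tightness}, the family $(\bar G^{(n_k)})_k$ is tight in $D([0,T],\R^d)$. Take any further subsequence, still denoted $(n_k)$, along which $\bar G^{(n_k)}$ converges weakly to some limit $Y$. Lemma~\ref{lem:critical_line_small_jumps} shows that the maximal jump tends to $0$ in probability. Hence $Y$ has continuous paths almost surely, and the convergence also holds in the uniform topology on $C([0,T],\R^d)$. Since $\bar G^{(n_k)}(0)=G_{n_k}$ converges to $\nu$, we get $Y_0\sim\nu$.

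\emph{Step 2: $Y$ solves the martingale problem for $\mathcal L$.} Fix $\varphi\in C_c^2(\R^d)$. Define
$$M^{(n)}_\varphi(t):=\varphi(\bar G^{(n)}(t))-\varphi(\bar G^{(n)}(0))-\int_0^t\mathcal L\varphi(\bar G^{(n)}(s))\rmd s.$$
The discrete sum $\sum_{j=n}^{m(n,t)-1}\bigl(\varphi(G_{j+1})-\varphi(G_j)-\E[\varphi(G_{j+1})-\varphi(G_j)\mid\mathcal F_j]\bigr)$ is a martingale in the time-changed filtration. By Lemma~\ref{lem:critical_line_generator}, $M^{(n)}_\varphi$ differs from this martingale by a term that tends to $0$ in $L^1$, uniformly on $[0,T]$.

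Now fix $0\le s_1<\dots<s_m\le s<t\le T$ and bounded continuous functions $h_i$. Then
$$\E\Bigl[\bigl(M^{(n)}_\varphi(t)-M^{(n)}_\varphi(s)\bigr)\prod_i h_i(\bar G^{(n)}(s_i))\Bigr]\to0.$$
The functional $x\mapsto\bigl(\varphi(x_t)-\varphi(x_s)-\int_s^t\mathcal L\varphi(x_u)\rmd u\bigr)\prod_i h_i(x_{s_i})$ is bounded, because $\mathcal L\varphi$ is continuous with compact support. It is also continuous at every continuous path, because $H$ is continuous. By Step 1, $Y$ is continuous almost surely, so we may pass to the limit and obtain
$$\E\Bigl[\bigl(\varphi(Y_t)-\varphi(Y_s)-\textstyle\int_s^t\mathcal L\varphi(Y_u)\rmd u\bigr)\prod_i h_i(Y_{s_i})\Bigr]=0.$$
This holds at all times outside the at most countable set of fixed discontinuity times of the limit, and since $Y$ is continuous it extends to all times. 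Hence $Y$ solves the martingale problem for $(\mathcal L,\nu)$.

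\emph{Step 3: uniqueness and conclusion.} A solution of the martingale problem is a weak solution of \eqref{eq:critical_line_limit_SDE}. This follows from the standard correspondence, using that $\Sigma^{1/2}$ is invertible and constant. By Lemma~\ref{sde.unique_solution}, that weak solution is unique in law, so $Y\overset{d}{=}X^\nu$. Every subsequence therefore has a further subsequence converging to the same limit, which gives weak convergence of the whole sequence $\bar G^{(n_k)}$ to $X^\nu$.

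\emph{Main obstacle.} The hardest step will be the limit passage in Step 2. The time-change indices $m(n,s_i)$ must be handled carefully so that the test functions of $\bar G^{(n)}(s_i)$ are $\mathcal F_{m(n,s)}$-measurable. One must also justify continuity of the functional despite the non-Lipschitz, merely Hölder, drift $H$. The compact support of $\varphi$ is what makes the integral term bounded, since $\mathcal L\varphi$ then vanishes outside a compact set; continuity of $H$ then suffices.
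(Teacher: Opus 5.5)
Your proposal is correct and follows essentially the same route as the paper: tightness from Lemma~\ref{Aldous.tightness}, continuity of subsequential limits from Lemma~\ref{lem:critical_line_small_jumps}, identification of the limit as a solution of the martingale problem for $\mathcal L$ via Lemma~\ref{lem:critical_line_generator}, and uniqueness from Lemma~\ref{sde.unique_solution}. The differences are cosmetic. You use finite-dimensional test functions $\prod_i h_i(\bar G^{(n)}(s_i))$, whereas the paper uses a general bounded continuous functional $\Psi(\bar G^{(n)}|_{[0,s]})$. You pass from martingale problem to weak solution by the standard correspondence, whereas the paper cites Ethier--Kurtz for well-posedness.
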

\begin{proof}
By Lemma~\ref{Aldous.tightness}, the sequence $(\bar G^{(n_k)})$ is tight in
$D([0,T],\R^d)$.
Hence along a further subsequence (not relabeled) we may assume
$\bar G^{(n_k)}$ weakly converges to $Y$ in $D([0,T],\R^d)$ for some càdlàg process $Y$. 
By Lemma~\ref{lem:critical_line_small_jumps}, we have
$$
\sup_{0\le t\le T}\|\Delta \bar G^{(n_k)}(t)\|
\xrightarrow[k\to\infty]{\P}0.
$$
By Theorem~13.4 in \cite{BP1999}, applied componentwise after the linear time change from $[0,T]$ to $[0,1]$, every weak limit of $(\bar G^{(n_k)})$ has continuous paths almost surely. Hence $Y$ is almost surely continuous.

Let $\varphi\in C_c^2(\R^d)$ and define
$$
M_t^{(n)}(\varphi)
:=
\varphi(\bar G^{(n)}(t))
-
\varphi(\bar G^{(n)}(0))
-
\int_0^t \mathcal{L}\varphi(\bar G^{(n)}(s))\rmd s,
$$
where we recall that the operator $\mathcal{L}$ is given by \eqref{L.df}. By Lemma~\ref{lem:critical_line_generator},
\begin{align}\label{L1.conv}
   \sup_{0\le r\le T}
\left|
\sum_{j=n}^{m(n,r)-1}
\E[\varphi(G_{j+1})-\varphi(G_j)\mid\mathcal F_j]
-
\int_0^r \mathcal{L}\varphi(\bar G^{(n)}(u))\rmd u
\right|
\to0
\quad\text{in }L^1. 
\end{align}
Let $0\le s\le t\le T$ and let $\Psi:D([0,s],\R^d)\to\R$ be a bounded continuous functional. Since
$\bar G^{(n)}|_{[0,s]}$ is $\mathcal F_{m(n,s)}$-measurable and
$$
D_{j+1}:=\varphi(G_{j+1})-\varphi(G_j)
-\E[\varphi(G_{j+1})-\varphi(G_j)\mid\mathcal F_j]
$$
is a martingale difference sequence, we have
$$
\E\left[
\sum_{j=m(n,s)}^{m(n,t)-1}D_{j+1}
\,
\Psi\bigl(\bar G^{(n)}|_{[0,s]}\bigr)
\right]=0.
$$
Combining this identity with \eqref{L1.conv}, we obtain
\begin{equation}\label{eq:approx_mart}
\E\!\left[
\bigl(M_t^{(n)}(\varphi)-M_s^{(n)}(\varphi)\bigr)
\Psi\!\bigl(\bar G^{(n)}|_{[0,s]}\bigr)
\right]
\to 0.
\end{equation}

We now pass to the limit along $(n_k)$. Since $Y$ has continuous paths almost
surely, the maps
$$
\omega\mapsto \omega(t),\quad
\omega\mapsto \omega(s),\quad
\omega\mapsto \int_s^t \mathcal L\varphi(\omega(u))\rmd u
$$
are continuous at $Y$-almost every path in the $J_1$ topology on
$D([0,T],\R^d)$. Therefore the functional
$$
\omega\mapsto
\Bigl(
\varphi(\omega(t))-\varphi(\omega(s))
-\int_s^t \mathcal L\varphi(\omega(u))\rmd u
\Bigr)\Psi(\omega|_{[0,s]})
$$
is $Y$-almost surely continuous and bounded. Therefore, by weak convergence of
$\bar G^{(n_k)}$ to $Y$ and \eqref{eq:approx_mart}, we have
$$
\E\left[
\Bigl(\varphi(Y_t)-\varphi(Y_s)-\int_s^t \mathcal{L}\varphi(Y_u)\rmd u\Bigr)
\Psi(Y|_{[0,s]})
\right]
=0.
$$
Since this holds for every bounded continuous $\Psi$, it follows that
$$
\varphi(Y_t)-\varphi(Y_0)-\int_0^t \mathcal L\varphi(Y_u)\rmd u
$$
is a martingale for every $\varphi\in C_c^2(\R^d)$. Hence $Y$ solves the
martingale problem for $\mathcal L$.

By Lemma~\ref{sde.unique_solution}, the SDE \eqref{eq:critical_line_limit_SDE}
is well posed in the weak sense for every initial law. Hence the martingale
problem for $\mathcal{L}$ is well posed (see, e.g., Chapter~4 and Chapter~7
in \cite{EK1986}).
Since $\bar G^{(n_k)}(0)=G_{n_k}$ converges in distribution to $\nu$, the initial law of $Y$ is $\nu$.
Therefore $Y$ has the same law as $X^\nu$, the unique weak solution of
\eqref{eq:critical_line_limit_SDE} with initial law $\nu$. Since every convergent subsequence has the same limit law, the whole sequence
$\bar G^{(n_k)}$ converges weakly to $X^\nu$ in $D([0,T],\R^d)$.
\end{proof}

We can now prove the convergence in distribution of $G_n$.

\begin{proof}[Proof of Theorem~\ref{thm.nonlinear}.b]
Let $(G_{n_k})_{k\ge 1}$ be an arbitrary subsequence of $(G_n)_{n\ge 1}$. By Lemma~\ref{lem:tightness}, the
sequence $(G_n)_{n\ge1}$ is tight in $\R^d$. Hence, after passing to a further
subsequence if necessary, we may assume that
$(G_{n_k})$ converges in distribution to a probability measure $\nu$ on $\R^d$.
Fix $T>0$ and define
$$
\ell_k:=\min\{m\le n_k:\ t_{n_k}-t_m\le T\}.
$$
Then $\ell_k\to\infty$. Let
$
T_k:=t_{n_k}-t_{\ell_k}$. We notice that $T_k\to T$. Indeed, for all sufficiently large $k$, the minimality of $\ell_k$ implies
$
t_{n_k}-t_{\ell_k}\le T < t_{n_k}-t_{\ell_k-1},
$
hence
$$
0\le T-T_k < t_{\ell_k}-t_{\ell_k-1}=\frac{1}{\ell_k}\to 0 \quad\text{as $k\to\infty$}.
$$

For this fixed $T$, by tightness, after passing to a further subsequence, not relabeled, we may assume
$G_{\ell_k}$ converges in distribution to a probability measure $\nu_T$ on $\R^d$. Now consider the process $\bar G^{(\ell_k)}$ on the interval $[0,T+1]$.
Since $\bar G^{(\ell_k)}(0)=G_{\ell_k}$ converges in distribution to $\nu_T$, by Lemma~\ref{lem:critical_line_finite_horizon}, we infer that
$\bar G^{(\ell_k)}$ converges weakly to
$X^{\nu_T}$ in $D([0,T+1],\R^d)$,
where $X^{\nu_T}$ is the unique weak solution of \eqref{eq:sde_critical}
with initial law $\nu_T$.

By definition of $\bar G^{(\ell_k)}$, we have
$
\bar G^{(\ell_k)}(t)=G_{m(\ell_k,t)}.
$
Since $T_k=t_{n_k}-t_{\ell_k}$, it follows that
$
m(\ell_k,T_k)=n_k,
$
and thus
$
\bar G^{(\ell_k)}(T_k)=G_{n_k}.
$
Since $T_k\to T$ deterministically and $\bar G^{(\ell_k)}$ converges weakly to
$X^{\nu_T}$ in $D([0,T+1],\R^d)$, the pair
$(\bar G^{(\ell_k)},T_k)$ converges weakly to $(X^{\nu_T},T)$ in
$D([0,T+1],\R^d)\times[0,T+1]$. Consider the map
$\Phi:D([0,T+1],\R^d)\times[0,T+1]\to\R^d$ defined by
$\Phi(\omega,r)=\omega(r)$. This map is continuous at every
$(\omega,r)$ such that $\omega$ is continuous at $r$. Since
$X^{\nu_T}$ has continuous paths almost surely,  the continuous mapping theorem (see Theorem~2.7 in
\cite{BP1999}) yields that
$\bar G^{(\ell_k)}(T_k)$ converges in distribution to $X_T^{\nu_T}$.
Therefore $G_{n_k}$ converges in distribution to $X_T^{\nu_T}$.
As $G_{n_k}$ converges in distribution to $\nu$, we conclude that
\begin{align}
   \nu = \nu_T P_T,
\label{eq:nu_PT} 
\end{align}
where $(P_t)_{t\ge0}$ is the semigroup of the limiting diffusion.

Let $\pi$ be the unique invariant probability measure of the diffusion \eqref{eq:sde_critical}. By \eqref{eq:nu_PT}, for every bounded continuous function $f:\R^d\to\R$, we have
$$
\nu(f)-\pi(f)=\nu_T(P_Tf-\pi(f)).
$$
By Lemma~\ref{sde.unique_solution}, there exist constants
$C,\lambda>0$ such that
$$
|\nu(f)-\pi(f)|
\le
C e^{-\lambda T}\|f\|_\infty \int_{\R^d}(1+\|x\|^2)\,\nu_T(\rmd x).
$$
It remains to bound the second moment under $\nu_T$. By the Portmanteau theorem (see Theorem~2.1 in \cite{BP1999}), since $x\mapsto 1+\|x\|^2$ is nonnegative and lower semicontinuous, and since $\nu_T$ is a weak limit of $(G_{\ell_k})$, we have
$$
\int_{\R^d}(1+\|x\|^2)\,\nu_T(\rmd x)
\le
\liminf_{k\to\infty}\E[1+\|G_{\ell_k}\|^2]
\le
1+\sup_{m\ge1}\E[\|G_m\|^2]
<\infty,
$$
where in the last step, we used Lemma~\ref{lem:tightness}. Therefore there exists a constant $C'>0$, independent of $T$, such that
$$
|\nu(f)-\pi(f)|
\le
C' e^{-\lambda T}\|f\|_\infty.
$$
Taking $T\to\infty$, we obtain
$
\nu(f)=\pi(f)
$
for every bounded continuous function $f$.
Hence $\nu=\pi$. Since $(G_n)_{n\ge 1}$ is tight by Lemma~\ref{lem:tightness} and every subsequential weak limit of $(G_n)_{n\ge 1}$ is equal to $\pi$, we conclude that
$G_n=\frac{S_n}{\sqrt n}$ converges in distribution to $\pi$.
This completes the proof.
\end{proof}

\subsection{Under the critical line}

Let $\gamma = (1-\beta)/(1-\alpha)$. As $2\beta-1<\alpha\le \beta$ and $0\le \beta<1$, we have $\gamma\in (1/2,1]$.
Set $G_n=n^{-\gamma} S_n$. Let $U_1=X_1$ and for $n\ge 1$, let $$U_{n+1}:=X_{n+1}-\E[X_{n+1}\mid\mathcal F_n]=X_{n+1}-\frac{\|S_n\|^{\alpha-1}S_n}{n^{\beta}}=X_{n+1}-\frac{\|G_n\|^{\alpha-1}G_n}{n^{1-\gamma}}.$$
Using the fact that $S_{n+1}=S_n+X_{n+1}$, we obtain
\begin{equation}\label{recur.eq}
G_{n+1}= (n+1)^{-\gamma}S_n+(n+1)^{-\gamma}X_{n+1}
      =\Bigl(\frac{n}{n+1}\Bigr)^{\gamma}G_n
       +\frac{1}{n^{1-\gamma}(n+1)^{\gamma}}\|G_n\|^{\alpha-1}G_n
       +\frac{U_{n+1}}{(n+1)^{\gamma}}.
\end{equation}
Using the Taylor expansions $$
\left(\frac{n}{n+1}\right)^\gamma
=
1-\frac{\gamma}{n+1}+O((n+1)^{-2}),\quad \frac{1}{n^{1-\gamma}(n+1)^\gamma}
=
\frac{1}{n+1}
+O\big((n+1)^{-2}\big).
$$
and substituting into \eqref{recur.eq}, we get
\begin{align}\label{G.SAeq}
    G_{n+1}=G_n+\frac{1}{n+1}\bigl(-\gamma G_n+\|G_n\|^{\alpha-1}G_n+R_n\bigr) +\frac{U_{n+1}}{(n+1)^{\gamma}}
\end{align}
where 
\begin{align}\label{inq.Rn}
    \|R_n\|\le \frac{C}{n}(\|G_n\|+\|G_n\|^{\alpha})
\end{align}
for some deterministic constant $C>0$.

In the next lemma, we give an upper bound for $G_n$.

\begin{lemma}\label{lem:S.bound}
  For each $\eps>0$, there exists a.s. a random positive constant $C_{\eps}$ such that $$\|G_n\| \le C_{\eps} n^{\eps}, \quad  \forall n\ge 1.$$
\end{lemma}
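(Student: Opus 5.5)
The plan is to mimic the proof of Lemma~\ref{Sb.lem}: apply the Robbins--Siegmund theorem (Proposition~\ref{RS.thm}) to $Z_n:=\|S_n\|^2/n^{2\gamma+2\eps}=\|G_n\|^2 n^{-2\eps}$. Since $\|G_n\|=n^{\eps}Z_n^{1/2}$, a.s. boundedness of $Z_n$ gives the lemma. We may assume $\eps$ small and $\alpha<1$; the case $\alpha=1$ forces $\beta=1$, which is excluded by $\beta<1$.

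\textbf{Step 1 (recursion).} As in Lemma~\ref{Sb.lem}, use $\E[X_{n+1}X_{n+1}^\top\mid\mathcal F_n]=\Sigma$ and the drift formula to get
\[
\E[\|S_{n+1}\|^2\mid\mathcal F_n]=\|S_n\|^2+\frac{2\|S_n\|^{\alpha+1}}{n^\beta}+\tr(\Sigma).
\]
Dividing by $(n+1)^{2\gamma+2\eps}$ and using $n^{2\gamma+2\eps}/(n+1)^{2\gamma+2\eps}=1-\frac{2\gamma+2\eps}{n+1}+O(n^{-2})$ yields
\[
\E[Z_{n+1}\mid\mathcal F_n]\le\Bigl(1-\tfrac{2\gamma+2\eps}{n+1}+O(n^{-2})\Bigr)Z_n+\frac{2 n^{(\gamma+\eps)(\alpha+1)}Z_n^{(\alpha+1)/2}}{n^{\beta}(n+1)^{2\gamma+2\eps}}+\frac{\tr\Sigma}{(n+1)^{2\gamma+2\eps}}.
\]

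\textbf{Step 2 (the drift term, the main obstacle).} The drift term is not small. Using $\gamma(1-\alpha)=1-\beta$, its power of $n$ is
\[
(\gamma+\eps)(\alpha+1)-\beta-2\gamma-2\eps=-1-\eps(1-\alpha),
\]
so the term equals $\frac{2}{n}\,n^{-\eps(1-\alpha)}Z_n^{(\alpha+1)/2}$. This is exactly the scale at which $r$ is an equilibrium, and it is why the extra factor $n^{\eps}$ is needed. Apply Young's inequality with exponents $p=\frac{2}{\alpha+1}$ and $q=\frac{2}{1-\alpha}$, splitting
\[
n^{-\eps(1-\alpha)}Z_n^{(\alpha+1)/2}=\bigl(\delta Z_n^{(\alpha+1)/2}\bigr)\bigl(\delta^{-1}n^{-\eps(1-\alpha)}\bigr)
\]
with a fixed small $\delta$. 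This gives a bound $c\,\delta^{2/(\alpha+1)}Z_n+C_\delta n^{-2\eps}$, where $c=\frac{\alpha+1}{2}$. Choose $\delta$ so that $2c\,\delta^{2/(\alpha+1)}\le\gamma$. The drift contribution is then at most $\frac{\gamma}{n}Z_n+\frac{C}{n^{1+2\eps}}$, which is absorbed by the contraction $-\frac{2\gamma}{n+1}Z_n$ up to a summable remainder.

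\textbf{Step 3 (conclusion).} Step 2 yields
\[
\E[Z_{n+1}\mid\mathcal F_n]\le(1+a_n)Z_n+b_n-Y_n,
\]
with $a_n=O(n^{-2})$, $b_n=O(n^{-1-2\eps})+O(n^{-2\gamma})$, and $Y_n\ge\frac{\gamma}{2(n+1)}Z_n\ge0$ for large $n$. Since $\gamma>1/2$, both $\sum a_n$ and $\sum b_n$ are finite. Proposition~\ref{RS.thm} then gives a.s. convergence of $Z_n$ to a finite limit. Hence $\sup_n Z_n<\infty$ a.s., and $C_\eps:=\sup_n Z_n^{1/2}$ works.

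The delicate point to check is the exponent bookkeeping in Step 2. Without $\eps$, Young's inequality would give a constant remainder of order $1/n$, which is not summable. The $\eps$-shift is precisely what turns that remainder into $n^{-1-2\eps}$.
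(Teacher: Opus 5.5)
Your proposal is correct and follows the paper's proof essentially verbatim. Both use the same $Z_n=n^{-2\eps}\|G_n\|^2$, the same Robbins--Siegmund recursion, and the same Young's inequality with exponents $\frac{2}{\alpha+1}$ and $\frac{2}{1-\alpha}$, giving the summable remainder $n^{-1-2\eps}$. The only cosmetic difference is that you factor out $1/n$ and fix the Young weight $\delta$, whereas the paper uses the $n$-dependent weight $\mu^{2/(\alpha+1)}=\frac{\gamma+\eps}{2n}$; this amounts to the same computation.
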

\begin{proof}
	Let $Z_n :=  n^{-2\eps}\|G_n\|^2= n^{-2\gamma-2\eps} \|S_n\|^2$. We apply Robbins–Siegmund theorem
	(see Proposition~\ref{RS.thm} in the appendix) to show that $Z_n$ is bounded almost surely. As in the proof of Lemma~\ref{Sb.lem}, we have
	\begin{equation}\label{eq:Zrec_under}
		\mathbb{E}[Z_{n+1}\mid\mathcal{F}_n]
		= \frac{n^{2\gamma+2\eps}}{(n+1)^{2\gamma+2\eps}}\,Z_n
		+ \frac{2\|S_n\|^{\alpha+1}}{n^\beta(n+1)^{2\gamma+2\eps}}
		+ \frac{\mathrm{tr}(\Sigma)}{(n+1)^{2\gamma+2\eps}}.
	\end{equation}
	Using the fact that $\|S_n\|^2 = n^{2(\gamma+\eps)}Z_n$
	and $\gamma(1-\alpha)=1-\beta$, we have 
	\begin{equation}
	    \label{eq:Young_Zn_under}
        \frac{\|S_n\|^{\alpha+1}}{n^\beta(n+1)^{2(\gamma+\eps)}}\le\frac{Z_n^{(\alpha+1)/2}}{n^{\beta+(\gamma +\eps)(1 -\alpha)}}
	= \frac{Z_n^{(\alpha+1)/2}}{n^{1+\eps(1-\alpha)}}\le \frac{\gamma+\eps}{2n} Z_n+\frac{C_{\alpha}}{n^{1+2\eps}},
	\end{equation}
	with some constant $C_{\alpha}>0$ depending only on $\alpha$, where the last bound is obtained by applying the Young inequality
	$AB \le \frac{A^p}{p}+\frac{B^q}{q}$ with
	$p=\frac{2}{\alpha+1}$, $q=\frac{2}{1-\alpha}$,
	$A = \mu Z_n^{(\alpha+1)/2}$,
	$B = \mu^{-1} n^{-[1+\eps(1-\alpha)]}$,
	and choosing $\mu^{2/(\alpha+1)} = \frac{\gamma+\eps}{2n}$.	
	Substituting \eqref{eq:Young_Zn_under} into \eqref{eq:Zrec_under}
	and using $\frac{n^{2\gamma+2\eps}}{(n+1)^{2\gamma+2\eps}}
	= 1 - \frac{2\gamma+2\eps}{n+1} + O(n^{-2})$, we obtain that
	$$
	\mathbb{E}[Z_{n+1}\mid\mathcal{F}_n]
	\le (1+\alpha_n)\,Z_n + \beta_n - Y_n,
	$$
	where
	$$
	\alpha_n = O(n^{-2}), \quad
	\beta_n = \frac{2C_\alpha}{n^{1+2\eps}}
	+ \frac{\mathrm{tr}(\Sigma)}{(n+1)^{2(\gamma+\eps)}},
	\quad
	Y_n = \frac{(\gamma+\eps)(n-1)}{n(n+1)}
Z_n.
	$$
	It is clear that all the conditions of Proposition~\ref{RS.thm} are fulfilled: $\sum_n\alpha_n<\infty$, $\sum_n\beta_n<\infty$  (as $\gamma>1/2$) and 
  $Y_n\ge 0$ for all $n\ge 1$. By Proposition~\ref{RS.thm}, $Z_n$ converges a.s. to a finite limit, hence $Z_n=O(1)$ a.s., yielding the result of the lemma
\end{proof}

Fix a sufficiently small $\eps\in (0,1)$ and a positive integer $m$. Let
$$\sigma_{m,\eps}:=\inf\{n\ge 1:\ \|G_n\|\,n^{-\eps}\ge m\}.$$
By Lemma \ref{lem:S.bound}, we have
$$\P(\bigcup_{m\ge1} \{\sigma_{m,\eps}=\infty\})=1.$$
Let $$V_n:=\|G_n\|^2 \text{ for }n\ge 1 \quad \text{and}\quad h(v):=2v\bigl(v^{(\alpha-1)/2}-\gamma\bigr) \text{ for }v\ge0.$$

\begin{lemma}\label{prop:pathwise_recursion}
Fix $m\in \N$ and $0<\varepsilon<\min\{1/2,\gamma-1/2\}$. For each $n\ge 1$, we have
\begin{equation}\label{eq:pathwise_recursion_Omega}
V_n
=
V_1+\sum_{k=1}^{n-1}\frac1{k+1} h(V_k)+Q_{n},
\end{equation}
where $(Q_n)_{n\ge 1}$ is a $\mathcal{F}_n$-adapted process such that $Q_{n\wedge \sigma_{m,\eps}}$ converges almost surely as $n\to\infty$.
\end{lemma}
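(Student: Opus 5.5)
We expand $V_{k+1}=\|G_{k+1}\|^2$ using the stochastic approximation form \eqref{G.SAeq}. Writing $F(x):=-\gamma x+\|x\|^{\alpha-1}x$, we have $\langle x,F(x)\rangle=\|x\|^{\alpha+1}-\gamma\|x\|^2=\tfrac12 h(\|x\|^2)$. Hence
\begin{align*}
V_{k+1}-V_k&=\frac{1}{k+1}h(V_k)+\frac{2}{k+1}\langle G_k,R_k\rangle+\frac{1}{(k+1)^2}\|F(G_k)+R_k\|^2\\
&\quad+\frac{2}{(k+1)^\gamma}\Bigl\langle G_k+\tfrac{1}{k+1}(F(G_k)+R_k),U_{k+1}\Bigr\rangle+\frac{\|U_{k+1}\|^2}{(k+1)^{2\gamma}}.
\end{align*}
We define $Q_n$ as the sum over $k\le n-1$ of every term other than $h(V_k)/(k+1)$. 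Then \eqref{eq:pathwise_recursion_Omega} holds by construction and $Q_n$ is $\mathcal F_n$-adapted. We split $Q_n=A_n+M_n$, where $A_n$ collects the predictable terms. These are the $R_k$ terms, the $\|F+R\|^2/(k+1)^2$ term, and $\E[\|U_{k+1}\|^2\mid\mathcal F_k]/(k+1)^{2\gamma}$. The remaining part $M_n$ is a martingale: the cross term with $U_{k+1}$ plus $(\|U_{k+1}\|^2-\E[\|U_{k+1}\|^2\mid\mathcal F_k])/(k+1)^{2\gamma}$.

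\emph{Predictable part.} On $\{k<\sigma_{m,\eps}\}$ we have $\|G_k\|\le mk^{\eps}$. By \eqref{inq.Rn}, $|\langle G_k,R_k\rangle|/(k+1)\le Cm^2k^{2\eps-2}$. Also $\|F(G_k)+R_k\|^2/(k+1)^2\le Cm^2k^{2\eps-2}$, and $\E[\|U_{k+1}\|^2\mid\mathcal F_k]/(k+1)^{2\gamma}\le \tr(\Sigma)(k+1)^{-2\gamma}$. All three are summable because $\eps<1/2$ and $\gamma>1/2$. So the stopped sum $A_{n\wedge\sigma_{m,\eps}}$ converges absolutely, deterministically on the stopped path.

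\emph{Martingale part.} It suffices to show that $M_{n\wedge\sigma_{m,\eps}}$ is bounded in $L^2$, or more modestly that its increments have summable conditional second moments, or summable $L^q$-norms for some $1<q\le 2$. The cross term is stopped through the predictable indicator $\1_{\{k<\sigma_{m,\eps}\}}$. Its conditional variance is at most $Cm^2k^{2\eps}(k+1)^{-2\gamma}\tr(\Sigma)$, which is summable exactly because $\eps<\gamma-1/2$; this is where that condition is used.

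The centered term $\|U_{k+1}\|^2/(k+1)^{2\gamma}$ is the main obstacle, since only $p>2$ moments are available and $\|U\|^2$ need not have a second moment. I would handle it with the $L^{q}$ martingale convergence theorem (Burkholder/von Bahr--Esseen) with $q=\min\{p/2,2\}>1$. First, $\E\|U_{k+1}\|^{p}\1_{\{k<\sigma\}}\le C(1+m^{p}k^{p(\eps+\gamma-1)\alpha}k^{...})$; more simply, on $\{k<\sigma\}$ the drift satisfies $\|G_k\|^\alpha/k^{1-\gamma}\le m^\alpha$, so $\|U_{k+1}\|^p\le C(\|X_{k+1}\|^p+m^{\alpha p})$, which is uniformly integrable by Assumption~\ref{assump}. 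The increments $D_k$ then satisfy $\E|D_k|^q\le C_m(k+1)^{-2\gamma q}$. Since $2\gamma q>q>1$, we get $\sum_k\E|D_k|^q<\infty$. The von Bahr--Esseen inequality then gives $\sup_n\E|\sum_{k<n\wedge\sigma}D_k|^q<\infty$, so this martingale converges a.s.

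Combining the two parts, $Q_{n\wedge\sigma_{m,\eps}}=A_{n\wedge\sigma_{m,\eps}}+M_{n\wedge\sigma_{m,\eps}}$ converges almost surely.
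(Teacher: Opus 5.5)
Your proof is correct and your $Q_n$ is the paper's $Q_n$. Your expansion of $V_{k+1}-V_k$ matches the paper's. Your treatment of the $R_k$ terms, the $\|F(G_k)+R_k\|^2$ term, and the cross term (where $\eps<\gamma-\tfrac12$ enters) is exactly the paper's.

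The one genuine difference is that you center $\|U_{k+1}\|^2/(k+1)^{2\gamma}$. This produces a martingale whose increments need not be square integrable, and it pushes you into an $L^q$/von Bahr--Esseen argument. The paper avoids this entirely. It keeps this term uncentered in the finite-variation part and uses only that it is nonnegative with
$$\E\Bigl[\sum_{k\ge1}\frac{\|U_{k+1}\|^2}{(k+1)^{2\gamma}}\Bigr]\le \tr(\Sigma)\sum_{k\ge1}(k+1)^{-2\gamma}<\infty .$$
Hence the series is a.s.\ finite, with no stopping and no moments beyond the second. Even within your split, $\E|D_k|\le 2\tr(\Sigma)(k+1)^{-2\gamma}$ already gives $\sum_k|D_k|<\infty$ a.s. So what you call the main obstacle is not really one, and the paper's route buys a shorter argument that never touches $p$.

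One step of your $L^q$ argument is false as written. On $\{k<\sigma_{m,\eps}\}$ you only know $\|G_k\|^\alpha/k^{1-\gamma}\le m^\alpha k^{\alpha\eps-(1-\gamma)}$. This is not bounded by $m^\alpha$ in general; for instance $\alpha=\beta$ gives $\gamma=1$, and then the bound grows like $k^{\alpha\eps}$. The conclusion survives in either of two ways:
\begin{itemize}
\item Use the correct bound to get $\E|D_k|^q\le C_m(k+1)^{-2q(\gamma-\alpha\eps)}$. This is summable because $\gamma-\alpha\eps\ge\gamma-\eps>\tfrac12$ and $q>1$.
\item Note that $\sup_k\E\|U_k\|^p<\infty$ holds with no stopping at all. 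This follows from $\E\|S_k\|^{\alpha p}\le (\E\|S_k\|^p)^\alpha\le Ck^{\alpha p}$ together with $\alpha\le\beta$.
\end{itemize}
The claimed uniform bound should be replaced by one of these. The unfinished estimate just before ``more simply'' should be deleted.
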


\begin{proof}
 From \eqref{G.SAeq}, we have
\begin{align}\label{eq:V_recursion}
     V_{n+1}
=
V_n+\frac1{n+1} h(V_n)+\vartheta_{n+1}+\rho_{n+1},
\end{align}
where
$$
\vartheta_{n+1}
:=
\frac2{(n+1)^{\gamma}}\Bigl\langle G_n+\frac1{n+1}(F(G_n)+R_n),U_{n+1}\Bigr\rangle,
$$
which is a martingale difference, and 
$$
\rho_{n+1}
:=
\frac{2}{n+1}\langle G_n,R_n\rangle
+\frac1{(n+1)^2}\|F(G_n)+R_n\|^2
+\frac{1}{(n+1)^{2\gamma}}\|U_{n+1}\|^2.
$$
Let
$$
M_n:=\sum_{k=1}^{n-1}\vartheta_{k+1},
\quad
A_n:=\sum_{k=1}^{n-1}\rho_{k+1},
\quad
Q_n:=M_n+A_n.
$$
Then \eqref{eq:pathwise_recursion_Omega} follows immediately from
\eqref{eq:V_recursion}. It remains to show that
$A_{n\wedge\sigma_{m,\eps}}$ and $M_{n\wedge\sigma_{m,\eps}}$ converge a.s.

We first show the almost sure convergence of $A_{n\wedge\sigma_{m,\eps}}$.
On $\{\sigma_{m,\eps}<\infty\}$, the sequence
$A_{n\wedge\sigma_{m,\eps}}$ is eventually constant, hence converges trivially. It therefore suffices to work on
$\{\sigma_{m,\eps}=\infty\}$, where
$\|G_n\|<mn^\eps$ holds for all $n\ge1$ by definition of
$\sigma_{m,\eps}$. On this event, since $\|x\|^\alpha\le 1+\|x\|$ for
$\alpha\in(0,1)$, we have
$$
\|F(G_n)\|\le \gamma\|G_n\|+\|G_n\|^\alpha
\le C_m n^\eps,
$$
and by \eqref{inq.Rn},
$$
\|R_n\|\le \frac{C}{n}(\|G_n\|+\|G_n\|^\alpha)
\le C_m n^{-1+\eps}.
$$
Therefore,
$$
\left|
\frac{2}{n+1}\langle G_n,R_n\rangle
\right|
\le
\frac{C_m}{n^{2-2\eps}} \quad\text{and}\quad\frac1{(n+1)^2}\|F(G_n)+R_n\|^2
\le
\frac{C_m}{n^{2-2\eps}}.
$$
Since $\eps<1/2$, these two series are summable on
$\{\sigma_{m,\eps}=\infty\}$.

It remains to control the last term in $\rho_{n+1}$. Since
$
\E[\|U_{n+1}\|^2\mid\mathcal F_n]\le \mathrm{tr}(\Sigma),
$
we have
$$
\E\left[
\sum_{n=1}^\infty
\frac{\|U_{n+1}\|^2}{(n+1)^{2\gamma}}
\right]
=
\sum_{n=1}^\infty
\frac{\E[\|U_{n+1}\|^2]}{(n+1)^{2\gamma}}
\le
\mathrm{tr}(\Sigma)
\sum_{n=1}^\infty \frac1{(n+1)^{2\gamma}}
<\infty,
$$
as $\gamma>1/2$. Hence
$$
\sum_{n=1}^\infty
\frac{\|U_{n+1}\|^2}{(n+1)^{2\gamma}}
<\infty
\quad\text{a.s.}
$$
Thus $\sum_n|\rho_{n+1}|<\infty$ a.s. on $\{\sigma_{m,\eps}=\infty\}$, and
therefore $A_{n\wedge\sigma_{m,\eps}}$ converges almost surely.

We now prove the almost sure convergence of $M_{n\wedge\sigma_{m,\eps}}$.
Note that $(M_{n\wedge\sigma_{m,\eps}})_{n\ge1}$ is a martingale. Moreover, since
$\{k<\sigma_{m,\eps}\}\in\mathcal F_k$, we have
\begin{align*}
\E\left[
\vartheta_{k+1}^2\mathbf 1_{\{k<\sigma_{m,\eps}\}}
\mid\mathcal F_k
\right]
&\le
\frac{C}{(k+1)^{2\gamma}}
\mathbf 1_{\{k<\sigma_{m,\eps}\}}
\left\|G_k+\frac1{k+1}(F(G_k)+R_k)\right\|^2
\E[\|U_{k+1}\|^2\mid\mathcal F_k].
\end{align*}
On $\{k<\sigma_{m,\eps}\}$, we have
$\|G_k\|<mk^\eps$,
$\|F(G_k)\|\le C_m k^\eps$, and
$\|R_k\|\le C_m k^{-1+\eps}$. Hence
$$
\E\left[
\vartheta_{k+1}^2\mathbf 1_{\{k<\sigma_{m,\eps}\}}
\mid\mathcal F_k
\right]
\le
\frac{C_m}{k^{2\gamma-2\eps}}.
$$
Since $\eps<\gamma-\frac12$, we have
$
\sum_{k=1}^\infty \frac{C_m}{k^{2\gamma-2\eps}}<\infty.
$
Consequently,
$$
\sup_{n\ge1}\E\left[M_{n\wedge\sigma_{m,\eps}}^2\right]
=
\sup_{n\ge1}
\sum_{k=1}^{n-1}
\E\left[
\vartheta_{k+1}^2\mathbf 1_{\{k<\sigma_{m,\eps}\}}
\right]
<\infty.
$$
Thus $(M_{n\wedge\sigma_{m,\eps}})_{n\ge1}$ is bounded in $L^2$, and by the
$L^2$ martingale convergence theorem, $M_{n\wedge\sigma_{m,\eps}}$
converges almost surely. Combining the almost sure convergence of $A_{n\wedge\sigma_{m,\eps}}$ and
$M_{n\wedge\sigma_{m,\eps}}$, we conclude that
$Q_{n\wedge\sigma_{m,\eps}}$ converges almost surely.
This completes the proof.
\end{proof}

The next lemma shows that the jumps of $(V_n)$ vanish on each localization event $\{\sigma_{m,\eps}=\infty\}$.

\begin{lemma}\label{lem:small_jumps}
Fix $m\in \N$ and $0<\varepsilon<\min\{1/2,\gamma-1/2\}$. On the event $\{\sigma_{m,\eps}=\infty\}$, we have that a.s.,
$V_{n+1}-V_n\to 0$ as $n\to\infty$.
\end{lemma}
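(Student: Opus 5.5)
We read the claim off the decomposition \eqref{eq:V_recursion}:
$$
V_{n+1}-V_n=\frac{1}{n+1}h(V_n)+\vartheta_{n+1}+\rho_{n+1}.
$$
We show that each of the three terms tends to $0$ almost surely on $\{\sigma_{m,\eps}=\infty\}$. On this event $\|G_n\|<mn^{\eps}$ for all $n$, so $V_n<m^2n^{2\eps}$. The drift term then satisfies
$$
\frac{|h(V_n)|}{n+1}\le \frac{2V_n^{(\alpha+1)/2}+2\gamma V_n}{n+1}\le C_m n^{2\eps-1},
$$
which tends to $0$ because $\eps<1/2$.

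For the remaining two terms we reuse the proof of Lemma~\ref{prop:pathwise_recursion}. There it was shown that $A_{n\wedge\sigma_{m,\eps}}=\sum_{k<n\wedge\sigma_{m,\eps}}\rho_{k+1}$ converges almost surely. More precisely, $\sum_n|\rho_{n+1}|<\infty$ almost surely on $\{\sigma_{m,\eps}=\infty\}$, since each of the three pieces of $\rho_{n+1}$ is summable there. The terms of a convergent series tend to zero, so $\rho_{n+1}\to0$ on this event.

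It was also shown there that the martingale $M_{n\wedge\sigma_{m,\eps}}$ converges almost surely. On $\{\sigma_{m,\eps}=\infty\}$ we have $M_{n\wedge\sigma_{m,\eps}}=M_n$ for every $n$. Hence $M_n$ converges there, and its increments satisfy $\vartheta_{n+1}=M_{n+1}-M_n\to0$.

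Adding the three limits gives $V_{n+1}-V_n\to0$ almost surely on $\{\sigma_{m,\eps}=\infty\}$. No step is genuinely hard. The only point needing care is that the martingale convergence in Lemma~\ref{prop:pathwise_recursion} holds for the stopped process, so we must note that it coincides with the unstopped one on the event in question.
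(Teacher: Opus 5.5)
Your proof is correct and follows essentially the same route as the paper. The paper bounds the drift term by $C_m n^{-1+2\eps}$ as you do, then uses that $Q_n$ converges on $\{\sigma_{m,\eps}=\infty\}$ (where the stopped and unstopped processes coincide), so $Q_{n+1}-Q_n=\vartheta_{n+1}+\rho_{n+1}\to0$; your separate treatment of the martingale and $\rho$ parts is simply an unpacked version of that step.
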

\begin{proof}
Fix $m\ge1$ and work on the event $\{\sigma_{m,\eps}=\infty\}$.
Recall from Lemma~\ref{prop:pathwise_recursion} that on
$\{\sigma_{m,\eps}=\infty\}$, $Q_n$ converges almost surely. Hence
$Q_{n+1}-Q_n\to0$ almost surely.

It remains to control the drift term. On $\{\sigma_{m,\eps}=\infty\}$ we have
$
\|G_n\|<m n^\eps
$ for all $n\ge1$, and therefore
$
V_n\le m^2 n^{2\eps}.
$
Recall that
$
h(v)=2v^{(\alpha+1)/2}-2\gamma v.
$
Since $(\alpha+1)/2\le1$, there exists a constant $C>0$ such that
$
|h(v)|\le C(1+v)
$
for all $v\ge0$. Therefore, on $\{\sigma_{m,\eps}=\infty\}$,
$$
\left|\frac1{n+1}h(V_n)\right|
\le
\frac{C}{n+1}(1+V_n)
\le
C_m n^{-1+2\eps}\to0,
$$
as $\eps<1/2$. Subtracting \eqref{eq:pathwise_recursion_Omega} at times $n+1$ and $n$, we have
$$
V_{n+1}-V_n
=
\frac1{n+1}h(V_n)+Q_{n+1}-Q_n.
$$
Combining the two preceding limits, we obtain that
$
V_{n+1}-V_n\to0$ a.s. on $\{\sigma_{m,\eps}=\infty\}.$
\end{proof}
The following one-sided interval-exit argument is in the spirit of the
nonattracting-point technique used by Pemantle (see Chapter 3 in Pemantle's PhD thesis \cite{P1988}) to study the non-convergence of urn processes and stochastic approximations.

\begin{lemma}\label{lem:one_sided_exit}
Fix $m\ge1$, $0<\varepsilon<\min\{1/2,\gamma-1/2\}$, and let $I=[a,b]\subset(0,\infty)$. Assume that either $\inf_{v\in I}h(v)>0$ or $\sup_{v\in I}h(v)<0$.
Then for every interval $I'=[a',b']$ such that
$a<a'<b'<b$, the process $(V_n)_{n\ge 1}$ a.s. visits $I'$ only  finitely many times on the event $\{\sigma_{m,\eps}=\infty\}$.
\end{lemma}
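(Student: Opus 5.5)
We argue pathwise on $\Omega_m:=\{\sigma_{m,\eps}=\infty\}$, using the decomposition of Lemma~\ref{prop:pathwise_recursion} and the vanishing jumps of Lemma~\ref{lem:small_jumps}. By symmetry we treat the case $\inf_{I}h=:c>0$; the case $\sup_I h<0$ is identical with the roles of $a$ and $b$ exchanged. On $\Omega_m$ the sequence $Q_n=Q_{n\wedge\sigma_{m,\eps}}$ converges a.s., so it is Cauchy. Hence for every $\eta>0$ there is a random $N$ with
\[
\sup_{N\le j<k}|Q_k-Q_j|<\eta \quad\text{and}\quad \sup_{n\ge N}|V_{n+1}-V_n|<\eta .
\]
We fix $\eta<\min\{a'-a,\,b-b'\}/2$.

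The key step is that, after time $N$, once $V$ is in $I'$ it can never come back to $I'$ after leaving $I$ upward, and it cannot leave $I$ downward at all. Suppose $V_j\in I'$ for some $j\ge N$, and let $\tau:=\inf\{k>j: V_k\notin I\}$. For $j\le k\le\tau$, every summand of the drift in
\[
V_k-V_j=\sum_{i=j}^{k-1}\frac{h(V_i)}{i+1}+(Q_k-Q_j)
\]
has $V_i\in I$, so it is at least $c/(i+1)>0$. Therefore
\[
V_k\ge V_j-\eta>a'-\eta>a .
\]
Because the jumps are smaller than $\eta$, exiting $I$ through the bottom is impossible. Moreover, the drift sum dominates $c\,(t_k-t_j)$ with $t_k=\sum_{i\le k}1/i$ harmonic. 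Since $V_k\le b$ on $[j,\tau)$, this forces
\[
c\,(t_{\tau}-t_j)\le b-a'+\eta ,
\]
so $\tau<\infty$. At time $\tau$ we thus have $V_\tau>b$, and the last value before exit lies within $\eta$ of $b$.

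Next we show that from above $b'$ the process cannot return to $I'$ after time $N$. Suppose $k>\tau$ is the first time after $\tau$ with $V_k\le b'$. Let $s$ be the last time before $k$ with $V_s\ge b-\eta$; such an $s$ exists, with $V_s<b+\eta$ since the jumps are small. Between $s$ and $k$ the process stays in $(b',b)\subset I$, where $h>0$. Hence
\[
V_k-V_s\ge -|Q_k-Q_s|>-\eta .
\]
This gives $V_k>b-2\eta>b'$, a contradiction. So after $\tau$ the process never re-enters $I'$, and in particular it visits $I'$ only finitely often on $\Omega_m$; indeed the visits during $[j,\tau)$ are finitely many because $\tau<\infty$.

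The step needing most care is the re-entry argument: the Cauchy bound on $Q$ must be applied between two random times, namely the last time near $b$ and the putative re-entry time. This is legitimate because the bound holds uniformly over all pairs $N\le j<k$. We also need $\eta$ small relative to both gaps $a'-a$ and $b-b'$, and we must note that $h$ is continuous, so $c>0$ on all of $I$ as assumed. Finally, every statement above holds on $\Omega_m$ outside a null set, namely the set where $Q_{n\wedge\sigma_{m,\eps}}$ fails to converge.
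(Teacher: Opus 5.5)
Your argument is correct and is essentially the paper's: the same Cauchy bound on $Q_n$ and vanishing jumps, positivity of the drift sum on $I$ forcing a finite upward exit, and then a no-return step. There, choosing $s$ as the last time before the putative re-entry with $V_s\ge b-\eta$ neatly replaces the paper's inductive bookkeeping of successive excursions into $I$. One small correction: the $i=s$ term of the drift sum needs $V_s<b$, not merely $V_s<b+\eta$, and this does hold, since maximality of $s$ together with $V_k\le b'<b-\eta$ gives $V_{s+1}<b-\eta$, hence $V_s<V_{s+1}+\eta<b$.
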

\begin{proof}
We consider the case $\inf_{v\in I}h(v)>0$. Fix $I'=[a',b']$ with
$a<a'<b'<b$. Set
$$
\delta:=\operatorname{dist}(I',I^c)=\min\{a'-a, b-b'\}>0.
$$
Recall that by Lemma~\ref{prop:pathwise_recursion} and
Lemma~\ref{lem:small_jumps}, on $\{\sigma_{m,\eps}=\infty\}$, the sequence
$Q_n$ converges and $V_{n+1}-V_n\to0$ a.s. as $n\to\infty$. Therefore, on
$\{\sigma_{m,\eps}=\infty\}$, there exists a finite random integer $N$ such
that
\begin{align}
|Q_r-Q_s|&<\frac \delta4\quad\text{for all $r,s\ge N$,}
\label{eq:Cauchy_C}
\\
|V_{k+1}-V_k|&<\frac \delta4\quad\text{for all }k\ge N.
\label{eq:small_jumps_bound}
\end{align}
Fix $n\ge N$ with $V_n\in I'$. Define
$$
\tau:=\inf\{k\ge n:\ V_k\notin I\}.
$$
We first show that $\tau<\infty$. Suppose for contradiction that
$\tau=\infty$, so $V_k\in I$ for all $k\ge n$. Then by
\eqref{eq:pathwise_recursion_Omega},
$$
V_\ell-V_n
=
\sum_{k=n}^{\ell-1}\frac1{k+1}h(V_k)
+
Q_\ell-Q_n
\quad\text{for all }\ell>n.
$$
Since $h(V_k)\ge c_I:=\inf_{v\in I}h(v)>0$ for all $k\ge n$, we have
$$
V_\ell-V_n
\ge
c_I\sum_{k=n}^{\ell-1}\frac1{k+1}
-
|Q_\ell-Q_n|.
$$
By \eqref{eq:Cauchy_C}, we get
$$
V_\ell-V_n
\ge
c_I\sum_{k=n}^{\ell-1}\frac1{k+1}-\frac\delta4.
$$
Since the harmonic sum diverges, the right-hand side tends to $+\infty$ as
$\ell\to\infty$, which is impossible on the event $\{V_\ell\in I \text{ for all } \ell\ge n\}$. Thus $\tau<\infty$.

We next show that the exit is through the upper endpoint. Since $V_n\in I'$
and $V_\tau\notin I$, we have $\tau\ge n+1$ and $V_{\tau-1}\in I$. Using
again \eqref{eq:pathwise_recursion_Omega}, we have
$$
V_\tau-V_n
=
\sum_{k=n}^{\tau-1}\frac1{k+1}h(V_k)
+
Q_\tau-Q_n.
$$
Since $V_k\in I$ for all $n\le k\le\tau-1$, we have $h(V_k)\ge c_I>0$,
and therefore
$$
V_\tau-V_n
\ge
Q_\tau-Q_n
>
-\frac\delta4
$$
by \eqref{eq:Cauchy_C}. If $V_\tau<a$, then, since $V_n\in I'=[a',b']$,
$$
V_\tau-V_n\le a-a'\le-\delta,
$$
which is a contradiction. Hence $V_\tau>b$.

We now prove that the process cannot return to $I'$ after time $\tau$.
Assume, for contradiction, that there exists $s>\tau$ such that $V_s\in I'$.
Since $I'\subset I$, the first re-entry time
$$
m_1:=\inf\{k>\tau:\ V_k\in I\}
$$
is finite and satisfies $m_1\le s$. Since $V_\tau>b$, and since the jumps
are smaller than $\delta/4$ after time $N$, the first re-entry into $I$ must
occur from above; otherwise the process would have to jump from above $b$ to
below $a$ without visiting $I$, which is impossible. Hence
\begin{equation}\label{eq:first_reentry_one_sided}
V_{m_1}>b-\frac\delta4.
\end{equation}
Let
$$
\rho_1:=\inf\{k\ge m_1:\ V_k\notin I\}
$$
be the next exit time from $I$, with the convention that $\rho_1=\infty$ if
there is no such exit. For any finite $k$ such that $m_1\le k<\rho_1$, we
have $V_j\in I$ for all $m_1\le j\le k$. Thus, by
\eqref{eq:pathwise_recursion_Omega},
$$
V_k-V_{m_1}
=
\sum_{j=m_1}^{k-1}\frac1{j+1}h(V_j)
+
Q_k-Q_{m_1}
\ge
-\frac\delta4,
$$
where we used \eqref{eq:Cauchy_C} and the positivity of $h$ on $I$.
Combining this with \eqref{eq:first_reentry_one_sided}, we obtain
$
V_k>b-\delta/2>b'
$
for all finite $k$ with $m_1\le k<\rho_1$. Hence this whole excursion inside
$I$ cannot intersect $I'$.

If $\rho_1<\infty$, then the exit at time $\rho_1$ must also occur through
the upper endpoint. Indeed, since $V_{\rho_1-1}>b'$ and the jumps are smaller
than $\delta/4$, the process cannot jump below $a$ at time $\rho_1$. Therefore
$V_{\rho_1}>b$. If the process later re-enters $I$, define inductively
$$
m_{j+1}:=\inf\{k>\rho_j:\ V_k\in I\},
\quad
\rho_{j+1}:=\inf\{k\ge m_{j+1}:\ V_k\notin I\}.
$$
The same argument shows that each such re-entry occurs from above, each
excursion inside $I$ stays above $b'$, and each finite exit occurs again
through the upper endpoint. Therefore no later excursion into $I$ can
intersect $I'$. This contradicts the existence of $s>\tau$ with $V_s\in I'$. Hence, for every $n\ge N$ with $V_n\in I'$, after the corresponding exit time
$\tau$ the process never returns to $I'$. Since only finitely many times occur
before $\tau$, it follows that $(V_n)$ visits $I'$ only finitely many times on
$\{\sigma_{m,\eps}=\infty\}$.

The case $\sup_{v\in I}h(v)<0$ is obtained similarly by reversing the
inequalities.
\end{proof}

We next identify limit points of the sequence $(V_n)_{n\ge 1}$.
 
\begin{lemma}
\label{prop:limit_points}
Fix $m\ge1$ and $0<\varepsilon<\min\{1/2,\gamma-1/2\}$. On the event $\{\sigma_{m,\eps}=\infty\}$, the sequence $(V_n)_{n\ge 1}$ is bounded and every subsequential limit of $(V_n)_{n\ge 1}$ belongs to the set $\{0,r^2\}$.
\end{lemma}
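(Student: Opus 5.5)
The plan is to analyse the scalar function $h(v)=2v\bigl(v^{(\alpha-1)/2}-\gamma\bigr)$ on $[0,\infty)$. For $v>0$ we have $h(v)=0$ if and only if $v^{(\alpha-1)/2}=\gamma$, that is, $v=\gamma^{-2/(1-\alpha)}=r^2$. Since $(\alpha-1)/2<0$, the map $v\mapsto v^{(\alpha-1)/2}$ is strictly decreasing. Hence $h>0$ on $(0,r^2)$ and $h<0$ on $(r^2,\infty)$. Moreover, $h(v)\le 2v^{(\alpha+1)/2}-2\gamma v\to-\infty$ as $v\to\infty$. Consequently, every compact interval $I\subset(0,\infty)$ not containing $r^2$ has either $\inf_I h>0$ or $\sup_I h<0$, which is exactly what Lemma~\ref{lem:one_sided_exit} requires. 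Throughout, I work on the event $\{\sigma_{m,\eps}=\infty\}$. On this event Lemma~\ref{prop:pathwise_recursion} gives convergence of $Q_n$, and Lemma~\ref{lem:small_jumps} gives $V_{n+1}-V_n\to0$.

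\emph{Boundedness.} Fix $b_0>r^2+1$ and let $b_1>b_0$ be arbitrary. On $[b_0,\infty)$ we have $h\le -c<0$. I would first show that $V_n$ exits $[b_0,\infty)$ infinitely often or eventually stays below. Suppose instead that $V_k\ge b_0$ for all $k\ge n$. Then
\[
V_\ell-V_n\le -c\sum_{k=n}^{\ell-1}\tfrac{1}{k+1}+|Q_\ell-Q_n|\to-\infty,
\]
which is a contradiction. Next, take $N$ such that the jumps and the oscillations of $Q$ after time $N$ are less than $\delta:=(b_1-b_0)/4$, and suppose $V_n>b_1$ for some large $n$. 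Let $k_0<n$ be the last time before $n$ at which $V_{k_0}<b_0$ (assuming $k_0\ge N$, which holds once $V$ has dipped below $b_0$ after $N$). Then on the excursion $k_0<k\le n$ we have $h(V_{k})\le0$. The recursion therefore gives $V_n-V_{k_0+1}\le\delta$. Combined with $V_{k_0+1}<b_0+\delta$, this yields $V_n<b_0+2\delta<b_1$, a contradiction. There remains the case where $V$ never dips below $b_0$ after $N$. This is excluded by the divergence argument applied from time $N$ onwards. Hence $\limsup V_n\le b_1<\infty$.

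\emph{Identification of limit points.} Let $v^*$ be a subsequential limit with $v^*\notin\{0,r^2\}$. Then $v^*$ is finite by the boundedness step. I choose $a<a'<v^*<b'<b$ such that $[a,b]\subset(0,\infty)\setminus\{r^2\}$. By the sign analysis, Lemma~\ref{lem:one_sided_exit} applies to $I=[a,b]$, so $V_n$ visits $I'=[a',b']$ only finitely often. This contradicts $V_{n_k}\to v^*\in(a',b')$.

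The main obstacle I expect is the boundedness step. There is no a priori bound there, and I need a one-sided excursion argument in which the small-jump control and the Cauchy property of $Q_n$ are used uniformly. An alternative is to note that Lemma~\ref{lem:one_sided_exit}, applied to the intervals $[b_0,b_0+j]$, excludes infinitely many visits to every compact subinterval beyond $b_0$. Combined with the small jumps, this forces $V_n$ either to stay eventually below $b_0$ or to diverge to $+\infty$, and the latter is excluded by the drift-divergence argument above.
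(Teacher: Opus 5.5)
Your proof is correct and follows the paper's argument. You use the same sign analysis of $h$, the same drift-divergence argument ruling out $V_n$ staying above a high level forever, and the same application of Lemma~\ref{lem:one_sided_exit} to exclude limit points in $(0,r^2)\cup(r^2,\infty)$.

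The only variation is in the boundedness step. You bound $V_n$ directly through the last time it was below $b_0$, using $h\le 0$ on the excursion, the Cauchy property of $Q_n$ and the small jumps. The paper instead argues by contradiction: it forces infinitely many visits to $[5R/4,7R/4]$ and then invokes Lemma~\ref{lem:one_sided_exit} on $[R,2R]$, which is essentially your closing alternative.
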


\begin{proof}
Recall that
$
h(v)=2v^{(\alpha+1)/2}-2\gamma v
=
2v\bigl(v^{(\alpha-1)/2}-\gamma\bigr)$ for 
$v\ge0$. 
Since $0<\alpha<1$, the function $h$ satisfies
$$
h(v)>0 \quad\text{for }v\in(0,r^2) \quad\text{and}\quad
h(v)<0 \quad\text{for }v\in(r^2,\infty),
$$
where $r^{\alpha-1}=\gamma$. Moreover,
$h(v)\to-\infty$ as $v\to\infty$. Hence we can choose
$R>r^2+1$ sufficiently large such that
\begin{equation}\label{eq:h_negative_large}
h(v)\le -1
\quad\text{for all }v\ge R.
\end{equation}

We first show that on $\{\sigma_{m,\eps}=\infty\}$, the process cannot remain
forever above the level $R$. Indeed, suppose that there exists a finite random
time $N_0$ such that $V_n\ge R$ for all $n\ge N_0$. Then for every
$\ell>n\ge N_0$, using \eqref{eq:pathwise_recursion_Omega} and
\eqref{eq:h_negative_large}, we have
$$
V_\ell-V_n
=
\sum_{k=n}^{\ell-1}\frac{1}{k+1}h(V_k)
+
Q_\ell-Q_n
\le
-\sum_{k=n}^{\ell-1}\frac{1}{k+1}
+
Q_\ell-Q_n.
$$
On $\{\sigma_{m,\eps}=\infty\}$, the sequence $(Q_n)$ converges almost surely.
Since the harmonic sum diverges, the right-hand side tends to $-\infty$ as
$\ell\to\infty$, which contradicts $V_\ell\ge0$. Thus, on
$\{\sigma_{m,\eps}=\infty\}$, the process cannot stay in $[R,\infty)$ forever.

We next show that $(V_n)$ is bounded on $\{\sigma_{m,\eps}=\infty\}$. Suppose,
for contradiction, that $(V_n)$ is unbounded on this event. Since the process
cannot eventually remain in $[R,\infty)$, it visits both $[0,R)$ and
$(2R,\infty)$ infinitely often. By Lemma~\ref{lem:small_jumps}, there exists
a finite random time after which
$$
|V_{n+1}-V_n|<\frac R4.
$$
Therefore every sufficiently late passage from below $R$ to above $2R$, or
from above $2R$ to below $R$, must cross the interval
$
K:=\left[{5R}/{4},{7R}/{4}\right]\subset[R,2R].
$
Hence $K$ is visited infinitely often. However, by \eqref{eq:h_negative_large},
we have
$
\sup_{v\in[R,2R]}h(v)<0.
$
Applying Lemma~\ref{lem:one_sided_exit} with
$I=[R,2R]$ and $I'=K$, we conclude that $K$ can be visited only finitely many
times on $\{\sigma_{m,\eps}=\infty\}$. This contradiction proves that
$(V_n)$ is bounded on $\{\sigma_{m,\eps}=\infty\}$.

We now identify the possible subsequential limits. First, we show that there
are no subsequential limits in $(0,r^2)$. Let $L\in(0,r^2)$. Since $h$ is
continuous and strictly positive at $L$, there exists an interval
$I=[a,b]\subset(0,r^2)$ containing $L$ in its interior such that
$
\inf_{v\in I}h(v)>0.
$
Choose a smaller interval $I'=[a',b']$ such that
$
a<a'<L<b'<b.
$
By Lemma~\ref{lem:one_sided_exit}, the process $(V_n)$ visits $I'$ only
finitely many times on $\{\sigma_{m,\eps}=\infty\}$. This contradicts the
possibility that $L$ is a subsequential limit. Therefore no subsequential
limit can lie in $(0,r^2)$.

Similarly, if $L\in(r^2,\infty)$, then by continuity of $h$ and the fact that
$h(v)<0$ on $(r^2,\infty)$, we can choose an interval
$I=[a,b]\subset(r^2,\infty)$ containing $L$ in its interior such that
$
\sup_{v\in I}h(v)<0.
$
Choosing a smaller interval $I'=[a',b']$ with
$a<a'<L<b'<b$ and applying Lemma~\ref{lem:one_sided_exit}, we conclude that
$L$ cannot be a subsequential limit.

Thus, on $\{\sigma_{m,\eps}=\infty\}$, the sequence $(V_n)$ is bounded and
every subsequential limit belongs to $\{0,r^2\}$.
\end{proof}
We can now show that $(V_n)$ converges almost surely.

\begin{proposition}\label{lem:dist}
We have
$$
\operatorname{dist}\bigl(n^{-\gamma}S_n,\{0\}\cup \mathcal C_r\bigr)\to0
\quad\text{a.s.}
$$
\end{proposition}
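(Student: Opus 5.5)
The plan is to show that $V_n=\|G_n\|^2$ converges almost surely to a limit in $\{0,r^2\}$ on each event $\{\sigma_{m,\eps}=\infty\}$. Then I take the union over $m$. Fix $0<\eps<\min\{1/2,\gamma-1/2\}$. By Lemma~\ref{lem:S.bound}, $\P(\bigcup_{m\ge1}\{\sigma_{m,\eps}=\infty\})=1$, so it suffices to work on a single event $\{\sigma_{m,\eps}=\infty\}$ with $m$ fixed.

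On this event, Lemma~\ref{prop:limit_points} says that $(V_n)$ is bounded and that its set of subsequential limits $\mathcal{A}$ is a nonempty subset of $\{0,r^2\}$. By Lemma~\ref{lem:small_jumps}, $V_{n+1}-V_n\to0$ a.s. For a bounded real sequence with vanishing increments, the set of limit points is a closed interval, namely $[\liminf V_n,\limsup V_n]$. Indeed, if $\liminf V_n<c<\limsup V_n$, the sequence crosses $c$ infinitely often with arbitrarily small steps, so $c$ is also a limit point. Since $\mathcal{A}\subset\{0,r^2\}$ is a connected interval, it must be a singleton, so $V_n$ converges to $0$ or to $r^2$. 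This is exactly where the small-jumps property is used.

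Concretely, if both $0$ and $r^2$ were limit points, then every point of $(0,r^2)$, for instance $r^2/2$, would be a limit point. This contradicts Lemma~\ref{prop:limit_points}. Equivalently, one can apply Lemma~\ref{lem:one_sided_exit} to an interval $I\subset(0,r^2)$ on which $h>0$: the process crosses $I'$ infinitely often, which that lemma forbids.

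Consequently, a.s. $\|G_n\|\to0$ or $\|G_n\|\to r$. In both cases $\operatorname{dist}(G_n,\{0\}\cup\mathcal C_r)\le\min\{\|G_n\|,|\|G_n\|-r|\}\to0$, because the distance from $x$ to $\mathcal C_r$ equals $|\|x\|-r|$. Recall that $G_n=n^{-\gamma}S_n$.

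I do not expect any genuine obstacle, since the previous lemmas do the heavy lifting. The only care needed is the bookkeeping of the almost-sure sets: the statement holds on each $\{\sigma_{m,\eps}=\infty\}$ outside a null set, and a countable union of null sets is still null.
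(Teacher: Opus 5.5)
Your proposal is correct and follows essentially the paper's argument. The paper also combines the vanishing increments from Lemma~\ref{lem:small_jumps} with the fact that subintervals of $(0,r^2)$ are visited only finitely often (Lemma~\ref{lem:one_sided_exit}) to rule out having both $0$ and $r^2$ as limit points; your main route phrases this as connectedness of the limit set and appeals directly to Lemma~\ref{prop:limit_points}, which is an equivalent and slightly more economical shortcut.
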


\begin{proof}
Fix $0<\eps<\min\{1/2,\gamma-1/2\}$ and $m\ge1$, and work on the event
$\{\sigma_{m,\eps}=\infty\}$. By Lemma~\ref{prop:limit_points}, the sequence
$(V_n)$ is bounded and all of its subsequential limits belong to the set
$\{0,r^2\}$.

We claim that $(V_n)$ cannot have both $0$ and $r^2$ as subsequential limits.
Suppose for contradiction that both occur. Since $h$ is continuous and strictly
positive on $(0,r^2)$, we can choose
$
0<a<a'<b'<b<r^2
$
such that, with $I=[a,b]$ and $I'=[a',b']$,
$$
\inf_{v\in I}h(v)>0.
$$
By Lemma~\ref{lem:small_jumps}, on $\{\sigma_{m,\eps}=\infty\}$, we have that a.s.,
$
V_{n+1}-V_n\to0
$
Hence, almost surely on this event, there exists a finite random time $N$ such that
$$
|V_{n+1}-V_n|<\frac{b'-a'}{2}
\quad\text{for all }n\ge N.
$$
Since both $0$ and $r^2$ are subsequential limits, there are arbitrarily large
times at which $V_n<a$ and arbitrarily large times at which $V_n>b$. Therefore,
after time $N$, the process must make infinitely many passages from below $a$
to above $b$, or from above $b$ to below $a$. During each such passage, the process must visit $I'=[a',b']$. Indeed, if the
process goes from below $a$ to above $b$ without visiting $I'$, then at some step
it must jump from below $a'$ to above $b'$, which has size at least $b'-a'$,
contradicting the bound on the jumps. The passage from above $b$ to below $a$ is
handled similarly. Hence $V_n\in I'$ infinitely often. This contradicts Lemma~\ref{lem:one_sided_exit}, which says that, on
$\{\sigma_{m,\eps}=\infty\}$, the process $(V_n)$ visits $I'$ only finitely
many times. Therefore $0$ and $r^2$ cannot both be subsequential limits.

Consequently, on $\{\sigma_{m,\eps}=\infty\}$, the bounded sequence $(V_n)$ has
exactly one subsequential limit. Hence $(V_n)$ converges, and its limit belongs
to $\{0,r^2\}$. That is,
$$
V_n=\|G_n\|^2 \to V_\infty
\quad\text{with}\quad
V_\infty\in\{0,r^2\}
$$
almost surely on $\{\sigma_{m,\eps}=\infty\}$.

Since Lemma~\ref{lem:S.bound} implies
$
\mathbb P\left(\bigcup_{m\ge1}\{\sigma_{m,\eps}=\infty\}\right)=1,
$
we obtain that a.s. $
\|G_n\|^2\to V_\infty\in\{0,r^2\}$.
Finally, 
since $G_n=n^{-\gamma}S_n$, this proves
$\operatorname{dist}\bigl(n^{-\gamma}S_n,\{0\}\cup \mathcal C_r\bigr)\to0$ a.s.
\end{proof}

\begin{proof}[Proof of Theorem~\ref{thm.nonlinear}(c)]
By Proposition \ref{lem:dist}, we have a.s.
$
\operatorname{dist}(G_n,\{0\}\cup\mathcal C_r)\to0.
$
It remains to rule out convergence to $0$. Let
$$
\Gamma_0:=\{G_n\to0\}.
$$
We apply Proposition~\ref{lem:discrete_antitrap_from_CT} with
$\Gamma=\Gamma_0$. We rewrite \eqref{G.SAeq} in the form
$$
G_{n+1}-G_n=\upsilon_{n+1}F_n+c_{n+1}(\varepsilon_{n+1}+r_{n+1}),
$$
where
$$
\upsilon_{n+1}:=\frac{1}{n+1},
\quad
F_n:=F(G_n),
\quad
c_{n+1}:=\frac{1}{(n+1)^\gamma},
\quad
\varepsilon_{n+1}:=U_{n+1},
\quad
r_{n+1}:=(n+1)^{\gamma-1}R_n.
$$
Clearly,
$
\E[\varepsilon_{n+1}\mid\mathcal F_n]=0
$
and
$
\sum_{n=1}^\infty c_n^2
=
\sum_{n=1}^\infty n^{-2\gamma}<\infty,
$
since $\gamma>1/2$.

We first verify condition \textup{(a)} of Proposition~\ref{lem:discrete_antitrap_from_CT}. Since
$
\langle x,F(x)\rangle
=
\|x\|^2\bigl(\|x\|^{\alpha-1}-\gamma\bigr),
$
and $0<\alpha<1$, there exists $\rho>0$ such that
$\langle x,F(x)\rangle\ge0$ whenever $\|x\|<2\rho$. On $\Gamma_0$, we have $G_n\to0$, and therefore there exists a finite random
integer $T_0$ such that $\|G_n\|<2\rho$ for all $n\ge T_0$. Hence
$$
\langle G_n,F_n\rangle\ge0
\quad\text{for all }n\ge T_0.
$$
Moreover, since $\|F(G_n)\|\le \gamma\|G_n\|+\|G_n\|^\alpha\to0$ on
$\Gamma_0$, we also have
$
\upsilon_{n+1}\|F_n\|=({n+1})^{-1}\|F(G_n)\|\to0,
$
and therefore, increasing $T_0$ if necessary, we obtain
$$
\upsilon_{n+1}\|F_n\|<\rho
\quad\text{for all }n\ge T_0.
$$

We next verify condition \textup{(b)}. Since
$
\E[\|U_{n+1}\|^2\mid\mathcal F_n]
=
\operatorname{tr}(\Sigma)-n^{2(\gamma-1)}\|G_n\|^{2\alpha},
$
and since $G_n\to0$ on $\Gamma_0$ while $\gamma\le1$, we have
$$
\E[\|U_{n+1}\|^2\mid\mathcal F_n]
\to
\operatorname{tr}(\Sigma)
\quad\text{on }\Gamma_0.
$$
Since $\Sigma$ is positive definite, $\operatorname{tr}(\Sigma)>0$. Thus
condition \textup{(b)} holds on $\Gamma_0$.

We now verify conditions \textup{(c)} and \textup{(d)}. By the moment assumption
with $p>2$ and the Borel--Cantelli lemma, there exists an event
$\Omega_1\in\mathcal F_\infty$ with $\P(\Omega_1)=1$ such that
${n^{-1/2}}\|X_{n+1}\|\to0$ and Proposition \ref{lem:dist} holds on $\Omega_1.$
Let
$
b_n:=C_b n^{1/2-\gamma},
$
where $C_b>0$ is chosen large enough. Since $\gamma>1/2$, $(b_n)$ is
nonincreasing. On $\Omega_1$, we have
\begin{align*}
c_{n+1}\|\varepsilon_{n+1}\|
&=
(n+1)^{-\gamma}\|U_{n+1}\| \le
(n+1)^{-\gamma}\|X_{n+1}\|
+
(n+1)^{-\gamma}n^{\gamma-1}\|G_n\|^\alpha.
\end{align*}
on $\Omega_1$, the first term is eventually bounded by $n^{1/2-\gamma}$, and
the second term is eventually bounded by $2 r^{\alpha}/n$ for sufficiently large $n$. Since $\gamma\le1$, we have
$n^{-1}=O(n^{1/2-\gamma})$. Hence, increasing $C_b$ if necessary, we obtain that
$$
c_{n+1}\|\varepsilon_{n+1}\|\le b_n
$$
eventually on $\Omega_1$. This verifies condition \textup{(c)}. Furthermore,
$
\alpha_n^2
=
\sum_{k\ge n+1}c_k^2
=
\sum_{k\ge n+1}k^{-2\gamma}
\le C n^{1-2\gamma},
$ for some constant $C>0$, and therefore
$$
b_n^2=C_b^2 n^{1-2\gamma}=O(\alpha_n^2).
$$
Thus condition \textup{(d)} holds.

Finally, we verify condition \textup{(e)}. On $\Gamma_0$, the sequence
$(G_n)$ is eventually bounded. Hence, by \eqref{inq.Rn},
$
\|R_n\|
\le
{C}{n^{-1}}(\|G_n\|+\|G_n\|^\alpha)
\le{C}{n^{-1}}
$
eventually on $\Gamma_0$. Therefore
$
\|r_{n+1}\|
=
(n+1)^{\gamma-1}\|R_n\|
\le
C n^{\gamma-2}
$
eventually on $\Gamma_0$. Since $\gamma\le1$, we have
$$
\sum_{n=1}^\infty \|r_n\|^2<\infty
\quad\text{on }\Gamma_0.
$$
This verifies condition \textup{(e)}.

Thus all assumptions of Proposition~\ref{lem:discrete_antitrap_from_CT} are
satisfied with $\Gamma=\Gamma_0$. Hence
$
\P(\Gamma_0)=
\P(\Gamma_0\cap\{G_n\to0\})=0.
$
Consequently, the alternative $G_n\to0$ is excluded. Since
$
\operatorname{dist}(G_n,\{0\}\cup\mathcal C_r)\to0
\quad\text{a.s.},
$
we conclude that
$
\operatorname{dist}(G_n,\mathcal C_r)\to0$ a.s.
Recalling that $G_n=n^{-\gamma}S_n$, this proves Theorem~\ref{thm.nonlinear}(c).
\end{proof}

\section{Proof of Theorem \ref{thm.1}} \label{sec:thm1}

For a square matrix $M$, let $\operatorname{Spec}(M)$ stand for the spectrum of $M$, i.e., the set of all eigenvalues of $M$. Recall that $$\lambda_{\min}(M):=\min_{\lambda\in \operatorname{Spec}(M)}\operatorname{Re}(\lambda)\quad\text{and}\quad\lambda_{\max}(M):=\max_{\lambda\in \operatorname{Spec}(M)}\operatorname{Re}(\lambda).$$ For two square matrices $M$ and $N$, we write $M\preceq N$ or $N\succeq M$ if $N-M$ is non-negative definite. 

\begin{lemma}\label{lem.nA}
For any $d\times d$ matrix $A$ with $\lambda_{\min}(A)>0$, there exists a constant $C$ such that for all $n\ge1$,
$$
n^{-\lambda_{\min}(A)}
\le
\|n^{-A}\|
\le
C(1+|\log n|^{d-1})n^{-\lambda_{\min}(A)}.
$$
\end{lemma}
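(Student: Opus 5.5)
Both bounds come from the Jordan decomposition of $A$. Write $A=P J P^{-1}$ with $J=\bigoplus_{k}(\lambda_k I_{m_k}+N_{m_k})$, where $N_{m_k}$ is the nilpotent shift. Then
$$
n^{-A}=\exp(-\log(n)A)=P\Bigl(\bigoplus_k n^{-\lambda_k}\exp(-\log(n) N_{m_k})\Bigr)P^{-1}.
$$
Since $N_{m_k}^{m_k}=0$ and $m_k\le d$, the exponential is a finite sum:
$$
\exp(-\log(n) N_{m_k})=\sum_{j=0}^{m_k-1}\frac{(-\log n)^j}{j!}N_{m_k}^j,
$$
and each $N^j$ has operator norm at most $1$.

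\emph{Upper bound.} Take norms and use $|n^{-\lambda_k}|=n^{-\Re(\lambda_k)}\le n^{-\lambda_{\min}(A)}$ for $n\ge1$. This gives
$$
\|n^{-A}\|\le \|P\|\,\|P^{-1}\|\, n^{-\lambda_{\min}(A)}\sum_{j=0}^{d-1}\frac{|\log n|^j}{j!}.
$$
Each term satisfies $|\log n|^j\le 1+|\log n|^{d-1}$ for $0\le j\le d-1$, by splitting into $|\log n|\le1$ and $|\log n|>1$. Hence the upper bound holds with $C=d\,\|P\|\,\|P^{-1}\|$. The norm of a block-diagonal matrix is the maximum of the block norms, so the direct sum causes no extra difficulty.

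\emph{Lower bound.} Pick an eigenvalue $\lambda$ with $\Re(\lambda)=\lambda_{\min}(A)$ and a (possibly complex) eigenvector $v\neq0$. Then $n^{-A}v=\exp(-\log(n)A)v=n^{-\lambda}v$, so $\|n^{-A}v\|=n^{-\lambda_{\min}(A)}\|v\|$. This gives $\|n^{-A}\|\ge n^{-\lambda_{\min}(A)}$.

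The only subtle point is that $v$ may be complex while $\|\cdot\|$ is the operator norm on real matrices. This is harmless: a real matrix $M$ has the same operator norm on $\mathbb{C}^d$ as on $\R^d$, since $\|M(x+iy)\|^2=\|Mx\|^2+\|My\|^2\le\|M\|^2(\|x\|^2+\|y\|^2)$. Equivalently, the operator norm dominates the spectral radius, and the spectrum of $n^{-A}$ is $\{n^{-\lambda}:\lambda\in\operatorname{Spec}(A)\}$. Either route finishes the proof; there is no real obstacle.
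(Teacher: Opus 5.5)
Your proof is correct and follows the same route as the paper: the upper bound comes from the Jordan form $A=PJP^{-1}$ and the finite expansion of $\exp(-\log(n)N_{m})$ on each block, and the lower bound from the fact that $\|n^{-A}\|$ dominates its spectral radius $\max_{\lambda\in\operatorname{Spec}(A)}|n^{-\lambda}|=n^{-\lambda_{\min}(A)}$. Your version is slightly more explicit, giving the constant $C=d\,\|P\|\,\|P^{-1}\|$ and justifying the use of a complex eigenvector for a real matrix, but the argument is the same.
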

\begin{proof}
We first consider a Jordan block $J_m(\lambda)=\lambda I+N$, where $N$ is the
nilpotent matrix with ones on the superdiagonal. Since $N^m=0$, we have
\begin{align*}
\exp(tJ_m(\lambda))
&=
e^{t\lambda}\exp(tN)
=
e^{t\lambda}\sum_{k=0}^{m-1}\frac{t^k}{k!}N^k .
\end{align*}
Hence
\begin{equation}\label{J.bound}
\|\exp(tJ_m(\lambda))\|
\le
e^{\operatorname{Re}(\lambda)t}C_m(1+|t|^{m-1}),
\end{equation}
where $C_m$ is a positive constant depending only on $m$. Write $A=PJP^{-1}$ in Jordan canonical form, where
$$
J=\operatorname{diag}(J_{m_1}(\lambda_1),\dots,J_{m_k}(\lambda_k)),
\qquad
\sum_{i=1}^k m_i=d.
$$
Then, for $n\ge1$,
\begin{align*}
\|n^{-A}\|
&=
\|\exp(-\log(n)A)\| \le
\|P\|\,\|P^{-1}\|
\max_{1\le i\le k}
\|\exp(-\log(n)J_{m_i}(\lambda_i))\|.
\end{align*}
Using \eqref{J.bound} with $t=-\log n$, we obtain
$$
\|\exp(-\log(n)J_{m_i}(\lambda_i))\|
\le
C_i n^{-\operatorname{Re}(\lambda_i)}
(1+|\log n|^{m_i-1}).
$$
Since
$\operatorname{Re}(\lambda_i)\ge\lambda_{\min}(A)$, it follows that
$
n^{-\operatorname{Re}(\lambda_i)}
\le
n^{-\lambda_{\min}(A)}.
$
Also $m_i\le d$. Hence
$$
\|n^{-A}\|
\le
C(1+|\log n|^{d-1})n^{-\lambda_{\min}(A)}.
$$

Since the eigenvalues of
$n^{-A}$ are $n^{-\lambda}$, where $\lambda$ ranges over the eigenvalues of
$A$, we have, for $n\ge1$,
$$
\|n^{-A}\|
\ge
\max_{\lambda\in\operatorname{Spec}(A)} |n^{-\lambda}|
=
n^{-\lambda_{\min}(A)}.
$$
This completes the proof.
\end{proof}

For each $d\times d$ matrix $B$ with
$\lambda_{\min}(B)>0$, we define the matrix Gamma function
\begin{align}\label{gamma_matrix}
\Gamma(B)
:= \int_{0}^{\infty}e^{-t}t^{B-I}\rmd t.
\end{align}

\begin{lemma}\label{lem:product}
Define
$$
\Phi(1)=I
\quad\text{and}\quad
\Phi(n):=\prod_{j=1}^{n-1}\left(I+\frac{A}{j}\right),
\quad n\ge2.
$$
Then
$$
n^{-A}\Phi(n)=\Gamma(I+A)^{-1}+O(n^{-1})\quad \text{and}
$$
\begin{align}\label{Phi.inq}
\|\Phi(n)^{-1}\|
\le
C(1+|\log n|^{d-1})n^{-\lambda_{\min}(A)}
\end{align}
for some constant $C>0$ and all $n\ge1$. 
\end{lemma}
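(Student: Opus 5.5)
The plan is to compare $\Phi(n)$ with the scalar Gamma-function asymptotics, using the fact that all factors $I+A/j$ commute with $A$ and with each other. Since $\lambda_{\min}(A)>0$, every eigenvalue of $I+A/j$ has real part $>1$, so each factor is invertible and $\Phi(n)$ is invertible. Because everything commutes, I will work with $\log\Phi(n)=\sum_{j=1}^{n-1}\log(I+A/j)$. This is justified for $j$ large, say $j\ge j_0$ with $\|A\|/j_0<1$, where the matrix logarithm is given by its power series. Writing $\log(I+A/j)=A/j+E_j$ with $\|E_j\|\le C\|A\|^2/j^2$, I get
\begin{equation*}
\Phi(n)=\Phi(j_0)\exp\Bigl(A\sum_{j=j_0}^{n-1}\tfrac1j+\sum_{j=j_0}^{n-1}E_j\Bigr).
\end{equation*}
Using $\sum_{j<n}1/j=\log n+\gamma_E+O(1/n)$ and $\sum_{j\ge n}\|E_j\|=O(1/n)$, and the fact that all matrices commute, this yields $n^{-A}\Phi(n)=L+O(n^{-1})$ for some invertible limit $L$. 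The $O(1/n)$ error passes through $\exp$ because $\|\exp(X+Y)-\exp(X)\|\le\|\exp(X)\|\,\|Y\|e^{\|Y\|}$ for commuting $X,Y$.

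To identify $L=\Gamma(I+A)^{-1}$, the cleanest route is the matrix version of Gauss's product formula. I will prove it by the spectral/holomorphic functional calculus: define the scalar functions $f_n(z):=n^{-z}\prod_{j=1}^{n-1}(1+z/j)$. These are entire and converge locally uniformly on $\{\Re z>-1\}$ to $1/\Gamma(1+z)$, with error $O(1/n)$ uniformly on compacts. By the Riesz--Dunford integral over a contour $\Gamma_0$ enclosing $\operatorname{Spec}(A)$ inside $\{\Re z>-1\}$, we have $f_n(A)=\frac{1}{2\pi i}\oint f_n(z)(zI-A)^{-1}dz$, and this converges to $\frac{1}{2\pi i}\oint \Gamma(1+z)^{-1}(zI-A)^{-1}dz$ with rate $O(1/n)$.

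It remains to check that this functional-calculus value agrees with the integral definition \eqref{gamma_matrix} inverted. The map $z\mapsto \int_0^\infty e^{-t}t^{z}dt=\Gamma(1+z)$ is holomorphic on $\Re z>-1$. Moreover, $\int_0^\infty e^{-t}t^{A}dt$ equals the functional calculus of $\Gamma(1+\cdot)$ at $A$, by Fubini inside the contour integral applied to $t^A=\frac1{2\pi i}\oint t^z(zI-A)^{-1}dz$. The multiplicativity of the functional calculus then gives $\Gamma(I+A)^{-1}=(1/\Gamma(1+\cdot))(A)$. This identification step is the main technical obstacle; the alternative of an elementary Jordan-block computation would be messier. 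Note that $t^{B-I}$ with $B=I+A$ gives $t^A$, consistent with this.

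For \eqref{Phi.inq}, I write $\Phi(n)^{-1}=\bigl(n^{-A}\Phi(n)\bigr)^{-1}n^{-A}$. Since $n^{-A}\Phi(n)\to\Gamma(I+A)^{-1}$, which is invertible, the inverses $(n^{-A}\Phi(n))^{-1}$ are uniformly bounded for $n\ge n_1$. For the finitely many $n<n_1$, invertibility of $\Phi(n)$ gives a bound directly. Combining this with Lemma~\ref{lem.nA} gives $\|\Phi(n)^{-1}\|\le C\|n^{-A}\|\le C(1+|\log n|^{d-1})n^{-\lambda_{\min}(A)}$.
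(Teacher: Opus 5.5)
Your proposal is correct, but you obtain $n^{-A}\Phi(n)=\Gamma(I+A)^{-1}+O(n^{-1})$ by a different route from the paper.

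The paper writes $\Phi(n)=(I+A)_{n-1}/(n-1)!$ and cites the matrix Gauss product formula of J\'odar--Cort\'es, $\Gamma(B)=\lim_{n}(n-1)!(B)_n^{-1}n^{B}$. It reads the $O(n^{-1})$ rate off the estimates inside that paper's proof, since the rate is not part of the cited statement. Then, with $B=I+A$, it shifts the index via $(I+A)_n=(I+A)_{n-1}(A+nI)$ and inverts. This is very short, but it leans on an external proof and uses the invertibility of $\Gamma(I+A)$ without justification.

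You instead push the scalar formula through the Riesz--Dunford calculus. Since $f_n(A)=n^{-A}\Phi(n)$, a uniform $O(n^{-1})$ scalar error on the contour gives the matrix rate at once. Fubini together with multiplicativity of the calculus then both identifies the limit with the inverse of the integral-defined $\Gamma(I+A)$ and proves that this matrix is invertible. Your argument is therefore self-contained modulo classical facts. The one input you state without proof is the uniform-on-compacts estimate $f_n(z)=\Gamma(1+z)^{-1}\bigl(1+O(n^{-1})\bigr)$. It follows from $f_n(z)=\Gamma(n+z)/\bigl(\Gamma(1+z)\Gamma(n)n^{z}\bigr)$ and Stirling's formula, or from the scalar version of your logarithmic expansion combined with the Weierstrass product.

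Given the functional-calculus step, your matrix-logarithm step is redundant: the former already yields existence of the limit, the rate, and the identification. Your derivation of \eqref{Phi.inq} is the same as the paper's.
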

\begin{proof}
Since
$$
\Phi(n)
=
\prod_{j=1}^{n-1}\left(I+\frac{A}{j}\right)
=
\frac{(I+A)_{n-1}}{(n-1)!},
$$
where $(B)_m:=B(B+I)\cdots(B+(m-1)I)$, we have
$$
\Phi(n)^{-1}n^A
=
(n-1)!(I+A)_{n-1}^{-1}n^A.
$$
On the other hand, Theorem~1 in \cite{JC1998} asserts that for every matrix $B$ with
$\lambda_{\min}(B)>0$,
$$
\Gamma(B)
=
\lim_{n\to\infty}
(n-1)!(B)_n^{-1}n^B.
$$
Moreover, from the proof of Theorem~1 in \cite{JC1998} (the estimates leading to equations (13)--(19)), we notice that
$
(n-1)!(B)_n^{-1}n^B=\Gamma(B)+O(n^{-1}).
$
Applying this with $B=I+A$, we get
$$
(n-1)!(I+A)_n^{-1}n^{I+A}
=
\Gamma(I+A)+O(n^{-1}).
$$
Since
$
(I+A)_n=(I+A)_{n-1}(A+nI),
$
and all factors commute, we have
\begin{align*}
(n-1)!(I+A)_n^{-1}n^{I+A}
&=
(n-1)!(A+nI)^{-1}(I+A)_{n-1}^{-1}n^{I+A} =
\left(I+\frac{A}{n}\right)^{-1}\Phi(n)^{-1}n^A.
\end{align*}
Therefore
$$
\Phi(n)^{-1}n^A
=
\left(I+\frac{A}{n}\right)
\left(\Gamma(I+A)+O(n^{-1})\right)
=
\Gamma(I+A)+O(n^{-1}).
$$
Taking inverses, and using that $\Gamma(I+A)$ is invertible, we obtain
$$
n^{-A}\Phi(n)=\Gamma(I+A)^{-1}+O(n^{-1}).
$$

Finally, we note that
$
\Phi(n)^{-1}
=
n^{-A}\bigl(n^A\Phi(n)^{-1}\bigr).
$
Since $n^A\Phi(n)^{-1}=\Gamma(I+A)+O(n^{-1})$, the factor
$n^A\Phi(n)^{-1}$ is uniformly bounded for all large $n$. Enlarging the
constant to absorb finitely many small values of $n$, we obtain
$$
\|\Phi(n)^{-1}\|
\le
C\|n^{-A}\|.
$$
Combining this with Lemma~\ref{lem.nA}, we get
$$
\|\Phi(n)^{-1}\|
\le
C(1+|\log n|^{d-1})n^{-\lambda_{\min}(A)}.
$$
This proves \eqref{Phi.inq} and completes the proof.
\end{proof}

\begin{lemma}\label{lem:martingale}
a. The process
$(M_n)_{n\ge 0}$, with $M_0:=0$ and
$$
M_n:=\Phi(n)^{-1}S_n \quad \text{for $n\ge 1$,}
$$
is a martingale.

b. Assume that either $\lambda_{\min}(A)>1/2$ or $\lambda_{\max}(A)<1/2$ or $\lambda_{\min}=\lambda_{\max}=1/2$. Let $$\Delta_k:=M_k-M_{k-1}.$$ Then, a.s.\begin{align}
    \E[\Delta_{k} \Delta_{k}^\top \mid\mathcal{F}_{k-1}]= \Phi(k)^{-1} (\Sigma+R_{k-1}) \big(\Phi(k)^{-1}\big)^{\top}\quad\text{for all $k\ge 1$},
\end{align}
where $R_k$ is $\mathcal{F}_k$-measurable and $\|R_k\|$ converges to 0 in $L^1$.
\end{lemma}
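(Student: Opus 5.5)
For part a, I would compute $\E[M_{n+1}\mid\mathcal F_n]$ directly from Assumption~\ref{assump2}. Since $\Phi(n+1)=(I+A/n)\Phi(n)$ and all the factors are polynomials in $A$, the matrices $\Phi(n)$, $\Phi(n)^{-1}$ and $A$ commute. Invertibility of each factor $I+A/j$ follows from $\lambda_{\min}(A)>0$. For $n\ge1$ this gives
$$
\E[S_{n+1}\mid\mathcal F_n]=S_n+\tfrac1n AS_n=\bigl(I+\tfrac An\bigr)S_n ,
$$
and therefore
$$
\E[M_{n+1}\mid\mathcal F_n]=\Phi(n+1)^{-1}\bigl(I+\tfrac An\bigr)S_n=\Phi(n)^{-1}S_n=M_n .
$$
For $n=0$, we have $M_1=\Phi(1)^{-1}S_1=X_1$ and $\E[X_1]=0=M_0$ by Assumption~\ref{assump}. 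Integrability of $M_n$ is immediate from the moment bound \eqref{inq.Lp}. This settles part a.

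For part b, I would write the increment as
$$
\Delta_k=\Phi(k)^{-1}S_k-\Phi(k-1)^{-1}S_{k-1}=\Phi(k)^{-1}\Bigl(S_k-\bigl(I+\tfrac{A}{k-1}\bigr)S_{k-1}\Bigr)=\Phi(k)^{-1}U_k ,
$$
where $U_k:=X_k-\E[X_k\mid\mathcal F_{k-1}]$, with the convention $U_1=X_1$. Then
$$
\E[\Delta_k\Delta_k^\top\mid\mathcal F_{k-1}]=\Phi(k)^{-1}\,\E[U_kU_k^\top\mid\mathcal F_{k-1}]\,(\Phi(k)^{-1})^\top ,
$$
and
$$
\E[U_kU_k^\top\mid\mathcal F_{k-1}]=\Sigma+\mathcal E_{k-1}-\frac{1}{(k-1)^2}AS_{k-1}S_{k-1}^\top A^\top .
$$
So I set
$$
R_{k-1}:=\mathcal E_{k-1}-(k-1)^{-2}AS_{k-1}S_{k-1}^\top A^\top ,
$$
which is $\mathcal F_{k-1}$-measurable. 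For $k=1$, I take $R_0:=\E[X_1X_1^\top]-\Sigma$, which is deterministic; a finite number of initial terms is irrelevant to the $L^1$ convergence anyway.

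It remains to show $\E\|R_k\|\to0$. The term $\E\|\mathcal E_k\|\to0$ holds by assumption. For the other term,
$$
\E\bigl\|k^{-2}AS_kS_k^\top A^\top\bigr\|\le\|A\|^2\,\frac{\E\|S_k\|^2}{k^2},
$$
so the key step is a second-moment bound $\E\|S_k\|^2=o(k^2)$. The crude bound $\E\|S_k\|^2\le Ck^2$ from \eqref{inq.Lp} is not enough, and this is where I expect the main work to lie.

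I would use the martingale representation. Write $S_k=\Phi(k)M_k$, so
$$
\E\|S_k\|^2\le\|\Phi(k)\|^2\sum_{j\le k}\|\Phi(j)^{-1}\|^2\,\E\|U_j\|^2 .
$$
The quantity $\E\|U_j\|^2$ is bounded by $\E\|X_j\|^2$, hence uniformly bounded by Assumption~\ref{assump}. By Lemma~\ref{lem:product},
$$
\|\Phi(k)\|\le C(1+\log k)^{d-1}k^{\lambda_{\max}(A)},\qquad \|\Phi(j)^{-1}\|\le C(1+\log j)^{d-1}j^{-\lambda_{\min}(A)} .
$$
The upper bound on $\|\Phi(k)\|$ follows from $n^{-A}\Phi(n)\to\Gamma(I+A)^{-1}$ together with the analogue of Lemma~\ref{lem.nA} for $n^{A}$. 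This is where the case hypothesis enters: it lets me compare $\lambda_{\max}$ and $\lambda_{\min}$ cleanly.

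If $\lambda_{\min}>1/2$, the sum $\sum_j\|\Phi(j)^{-1}\|^2$ converges, so $\E\|S_k\|^2\le C(\log k)^{2d-2}k^{2\lambda_{\max}}=o(k^2)$ because $\lambda_{\max}<1$. If $\lambda_{\max}<1/2$, the sum grows at most like $k^{1-2\lambda_{\min}}$ times logarithms, which gives $\E\|S_k\|^2\le C(\log k)^{4d-4}\,k^{1+2(\lambda_{\max}-\lambda_{\min})}$. That exponent is strictly less than $2$ because $\lambda_{\max}-\lambda_{\min}<1/2$. The critical case $\lambda_{\min}=\lambda_{\max}=1/2$ gives $\E\|S_k\|^2\le Ck\,(\log k)^{C'}=o(k^2)$.

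Alternatively, a sharper estimate is available for the contracting direction: I could repeat the Lyapunov-type recursion of Lemma~\ref{lem:tightness} on $\E\|S_k\|^2/k^{2}$. I would only fall back on that if the logarithmic powers caused trouble. With $\E\|S_k\|^2=o(k^2)$ in hand, $\E\|R_k\|\to0$ and the proof is complete.
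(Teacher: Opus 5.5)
Your proposal is correct and follows essentially the same route as the paper: you use the representation $\Delta_k=\Phi(k)^{-1}U_k$, the same choice $R_k=\mathcal E_k-k^{-2}AS_kS_k^\top A^\top$, and the bound $\E\|S_k\|^2\le\|\Phi(k)\|^2\sum_{j\le k}\|\Phi(j)^{-1}\|^2\E\|U_j\|^2$ with the identical three-case analysis to obtain $\E\|S_k\|^2=o(k^2)$. The only harmless difference is that you bound $\E\|U_j\|^2$ directly by $\E\|X_j\|^2$ via the conditional-variance inequality, whereas the paper passes through a uniform $L^p$ bound on $U_j$.
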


\begin{proof}
a. Let $U_1:=X_1$, and for $n\ge1$, let
$$
U_{n+1}
=
X_{n+1}-\mathbb{E}[X_{n+1}\mid\mathcal F_n]
=
X_{n+1}-\frac{A}{n}S_n.
$$
Then $S_1=U_1$ and
\begin{align}\label{S.ind}
    S_{n+1}
=
\left(I+\frac{A}{n}\right)S_n+U_{n+1}
\quad\text{for $n\ge1$}.
\end{align}
Since the factors in the definition of $\Phi(n)$ are polynomials in $A$, they
commute. Hence
$$
\Phi(n+1)=\left(I+\frac{A}{n}\right)\Phi(n)=\Phi(n)\left(I+\frac{A}{n}\right).
$$
Therefore
$$
\Phi(n+1)^{-1}\left(I+\frac{A}{n}\right)=\Phi(n)^{-1}.
$$
It thus follows from \eqref{S.ind} that,
$M_{n+1}=M_n+\Phi(n+1)^{-1}U_{n+1}$. Hence, we obtain
\begin{align}\label{M.ind}
    M_n=\sum_{j=1}^n\Phi(j)^{-1}U_j.
\end{align}
Since $(U_n)_{n\ge 1}$ is a martingale difference sequence, $(M_n)_{n\ge0}$ is a martingale.

b. We first prove that
\begin{align}\label{S.lim}
\lim_{n\to\infty}\frac{1}{n^2}\E[\|S_n\|^2]=0.
\end{align}
By Assumption~\ref{assump}, $\sup_{n\ge1}\E[\|X_n\|^p]<\infty$ for some
$p>2$. Hence
$
\E[\|S_n\|^p]
\le
n^{p-1}\sum_{k=1}^n\E[\|X_k\|^p]
\le
C_1n^p.
$
Therefore
$$
\E[\|U_{n+1}\|^p]
\le
C_2\left(
\E[\|X_{n+1}\|^p]
+
\frac{1}{n^p}\E[\|S_n\|^p]
\right)
\le C_3.
$$
In particular, we have
\begin{align}\label{U.inq}
    \sup_{n\ge1}\E[\|U_n\|^2]<\infty.
\end{align}
 
Using \eqref{M.ind} together with the fact that $(U_n)_{n\ge1}$ is a martingale difference sequence, \eqref{U.inq} and Lemma~\ref{lem:product}, we have
\begin{align}\label{M.bound}
\E[\|M_n\|^2] \le
\sum_{j=1}^n
\|\Phi(j)^{-1}\|^2\E[\|U_j\|^2] \le
C
\sum_{j=1}^n
\|\Phi(j)^{-1}\|^2\le C
\sum_{j=1}^n (1+|\log j|^{d-1})^2j^{-2\lambda_{\min}(A)}.
\end{align}

Moreover, by the same Jordan-block estimate as in Lemma~\ref{lem.nA}, similarly as in the proof of   Lemma~\ref{lem:product}, we  have
$$
\|\Phi(n)\|
\le
C(1+|\log n|^{d-1})n^{\lambda_{\max}(A)}.
$$
Consequently,
\begin{align}\label{eq:Sn.second.general.bound}
\E[\|S_n\|^2]
 \le
\|\Phi(n)\|^2\E[\|M_n\|^2] \le 
C(1+|\log n|^{d-1})^2n^{2\lambda_{\max}(A)}
\sum_{j=1}^n
(1+|\log j|^{d-1})^2j^{-2\lambda_{\min}(A)}.
\end{align}

If
$\lambda_{\min}(A)>1/2$, then the sum in
\eqref{eq:Sn.second.general.bound} is bounded uniformly in $n$. Hence
$$
\frac{1}{n^2}\E[\|S_n\|^2]
\le
C(1+|\log n|^{d-1})^2n^{2\lambda_{\max}(A)-2}
\to0,
$$
since $\lambda_{\max}(A)<1$.

If $\lambda_{\max}(A)<1/2$, then also $\lambda_{\min}(A)<1/2$. Hence
$
\sum_{j=1}^n
(1+|\log j|^{d-1})^2j^{-2\lambda_{\min}(A)}
\le
Cn^{1-2\lambda_{\min}(A)}(1+|\log n|^{d-1})^2.
$
Therefore
$$
\frac{1}{n^2}\E[\|S_n\|^2]
\le
Cn^{-1+2\lambda_{\max}(A)-2\lambda_{\min}(A)}
(1+|\log n|^{d-1})^4
\to0,
$$
as $\lambda_{\max}(A)<1/2$ and $\lambda_{\min}(A)>0$.

In the case $\lambda_{\min}(A)=\lambda_{\max}(A)=1/2$, the real parts of the eigenvalues of $A$
are equal to $1/2$. Then the sum in \eqref{eq:Sn.second.general.bound} is at
most of logarithmic order, while $\|\Phi(n)\|^2$ is bounded by
$Cn(1+|\log n|^{d-1})^2$. Hence
$$
\frac{1}{n^2}\E[\|S_n\|^2]
\le
\frac{C(1+|\log n|^{2d})}{n}
\to0.
$$
Hence, \eqref{S.lim} is verified in all the three cases. 

We notice that
\begin{align*}
\E[U_{k+1}U_{k+1}^\top\mid\mathcal F_k]
&=
\E[X_{k+1}X_{k+1}^\top\mid\mathcal F_k]
-
\frac1{k^2}AS_kS_k^\top A^\top =
\Sigma+R_k,
\end{align*}
where we define $R_0=\mathcal{E}_0$ and $R_k:=
\mathcal E_k
-
\frac1{k^2}AS_kS_k^\top A^\top$ for $k\ge 1$.
Using \eqref{S.lim} and the assumption that $\E[\|\mathcal{E}_k\|]\to 0$, we obtain
\begin{align*}
\E[\|R_k\|]
&\le
\E[\|\mathcal E_k\|]
+
\frac{\|A\|^2\E[\|S_k\|^2]}{k^2}
\to0.
\end{align*}
Since $\Delta_k:=M_k-M_{k-1}=\Phi(k)^{-1}U_k$, we thus have
$\E[\Delta_{k}\Delta_{k}^\top\mid\mathcal{F}_{k-1}]= \Phi(k)^{-1}(\Sigma+R_{k-1}) (\Phi(k)^{-1})^\top.$
This completes the proof.
\end{proof}

\subsection{Supercritical case: \texorpdfstring{$\lambda_{\min}(A)>1/2$}{lambda_min(A)>1/2}}

Recall that from Lemma \ref{lem:martingale} that $M_n:=\Phi(n)^{-1}S_n$ is a martingale, and from \eqref{M.bound} that 
\begin{align*}
\E[\|M_n\|^2]
&\le
C
\sum_{j=1}^{\infty}
j^{-2\lambda_{\min}(A)}(1+|\log j|^{d-1})^2
<\infty,
\end{align*}
as $\lambda_{\min}(A)>1/2$. By Doob's martingale convergence theorem, $M_n$
converges a.s. and in $L^2$ to a finite limit $M_\infty$.

By Lemma~\ref{lem:product}, we have
$
n^{-A}\Phi(n)=\Gamma(I+A)^{-1}+O(n^{-1}).
$
Thus
$$
n^{-A}S_n
=
n^{-A}\Phi(n)M_n
\to
\Gamma(I+A)^{-1}M_\infty
=:W
$$
a.s. and in $L^2$. 

We now prove that $W$ is non-degenerate. Recall that $\Delta_n:=M_n-M_{n-1}$. By Lemma \ref{lem:martingale}, we have
$$
\E[\Delta_{k}\Delta_{k}^\top]
=
\Phi(k)^{-1}(\Sigma+\E[R_{k-1}])(\Phi(k)^{-1})^\top$$ with $\E[\|R_{k-1}\|]\to 0$.
Since $\Sigma$ is positive definite and $\E[\|R_{k}\|]\to 0$, there exists $k_0$ such that
$\Sigma+\E[R_{k}]$ is positive definite for all $k\ge k_0$. Hence, 
\begin{align*}
\E[M_\infty M_{\infty}^{\top}]
&=\sum_{k=1}^{\infty}\E[\Delta_k \Delta_k^\top]=
\sum_{k=1}^{\infty}
\Phi(k)^{-1}(\Sigma+\E[R_{k-1}])(\Phi(k)^{-1})^\top
\end{align*}
is positive definite. Since $\Gamma(I+A)$ is invertible and $W=\Gamma(I+A)^{-1}M_\infty$, $W$ is
non-degenerate.

We next use Corollary~\ref{thm.mtg1} (see Appendix \ref{appendix:martingale}) to prove that the martingale $(M_n)_n$ satisfies a tail martingale CLT.
Let
$$
\Gamma_n:=\sum_{k=n+1}^{\infty}\E[\Delta_k\Delta_k^\top]=\sum_{k=n+1}^{\infty}\Phi(k)^{-1}(\Sigma+\E[R_{k-1}])(\Phi(k)^{-1})^\top.
$$
We first derive an asymptotic formula for $\Gamma_n$ as $n\to\infty$.  
Since $\E[\|R_k\|]\to0$, using
Toeplitz's lemma, we have
$$
\Big\|
\sum_{k=n+1}^{\infty}
\Phi(k)^{-1}\E[R_{k-1}]\Phi(k^{-1})^\top
\Big\|
=
o\Big(
\Big\|
\sum_{k=n+1}^{\infty}
\Phi(k)^{-1}\Sigma(\Phi(k)^{-1})^\top
\Big\|
\Big).
$$
Therefore
$$
\Gamma_n
\sim
\sum_{k=n+1}^{\infty}
\Phi(k)^{-1}\Sigma(\Phi(k)^{-1})^\top
\quad \text{as $n\to\infty$}.
$$
Using $\Phi(k)^{-1}\sim k^{-A}\Gamma(I+A)$, we get
$$
\Gamma_n
\sim
\Gamma(I+A)
\Big(
\sum_{k=n+1}^{\infty}k^{-A}\Sigma k^{-A^\top}
\Big)
\Gamma(I+A)^\top.
$$
Moreover,
$$
\sum_{k=n+1}^{\infty}k^{-A}\Sigma k^{-A^\top}
\sim
\int_n^\infty t^{-A}\Sigma t^{-A^\top}\rmd t
\sim
n\,n^{-A}Qn^{-A^\top},
$$
where
$$
Q:=
\int_0^\infty
e^{-s(A-I/2)}
\Sigma
e^{-s(A-I/2)^\top}\rmd s.
$$
Equivalently, if $\Xi$ denotes the solution of the Lyapunov matrix equation
$$
\left(\frac12I-A\right)\Xi
+
\Xi\left(\frac12I-A^\top\right)
=
\Sigma,
$$
then $Q=-\Xi$. Consequently,
\begin{align}\label{Gamma.n}
\Gamma_n
\sim
n\,\Gamma(I+A)n^{-A}(-\Xi)n^{-A^\top}\Gamma(I+A)^\top.
\end{align}

We now verify the assumptions of Corollary~\ref{thm.mtg1}. Since
$$
\E[\Delta_k\Delta_k^\top\mid\mathcal F_{k-1}]
=
\Phi(k)^{-1}\Sigma(\Phi(k)^{-1})^\top
+
\Phi(k)^{-1}R_{k-1}(\Phi(k)^{-1})^\top,
$$
it remains to show that the second term is negligible after normalization.
Let $B_k:=\Phi(k)^{-1}$. From \eqref{Gamma.n}, the asymptotics
$B_k\sim k^{-A}\Gamma(I+A)$, and the positive definiteness of $\Sigma$, we have
$$
\sup_{k\ge n+1}\|\Gamma_n^{-1/2}B_k\|\to0,
\quad
\text{and}
\quad
\sum_{k=n+1}^{\infty}\|\Gamma_n^{-1/2}B_k\|^2\le C.
$$
Therefore
\begin{align*}
&\E\Big[
\Big\|
\Gamma_n^{-1/2}
\sum_{k=n+1}^{\infty}
B_kR_{k-1}B_k^\top
\Gamma_n^{-1/2}
\Big\|
\Big] \le
\sum_{k=n+1}^{\infty}
\|\Gamma_n^{-1/2}B_k\|^2\E[\|R_{k-1}\|]
\to0,
\end{align*}
again by a weighted Toeplitz argument. Hence
$$
\Gamma_n^{-1/2}
\sum_{k=n+1}^{\infty}
\E[\Delta_k\Delta_k^\top\mid\mathcal F_{k-1}]
\Gamma_n^{-1/2}
\convp I_d.
$$

We next verify the Lindeberg condition. Let $\eps>0$ and define
$$
L_n(\eps)
:=
\Gamma_n^{-1/2}
\sum_{k=n+1}^{\infty}
\E\!\left[
\Delta_k\Delta_k^\top
\mathbf 1_{\{\|\Gamma_n^{-1/2}\Delta_k\|\ge\eps\}}
\mid\mathcal F_{k-1}
\right]
\Gamma_n^{-1/2}.
$$
By \eqref{U.inq}, the family $(\|U_k\|^2)_{k\ge1}$ is uniformly integrable.
Thus, for every $\eta>0$, there exists $K>0$ such that
\begin{align}\label{UI.inq}
\sup_{k\ge1}
\E\!\left[\|U_k\|^2\mathbf 1_{\{\|U_k\|>K\}}\right]
<\eta.
\end{align}
Since $\sup_{k\ge n+1}\|\Gamma_n^{-1/2}\Phi(k)^{-1}\|\to0$, for all
sufficiently large $n$ and all $k\ge n+1$, we have
$
\{\|\Gamma_n^{-1/2}\Delta_k\|\ge\eps\}
\subseteq
\{\|U_k\|>K\}.
$
Taking traces and expectations, we thus obtain
\begin{align*}
\E[\operatorname{tr}L_n(\eps)]
&\le
\sum_{k=n+1}^{\infty}
\|\Gamma_n^{-1/2}\Phi(k)^{-1}\|^2
\E\!\left[\|U_k\|^2\mathbf 1_{\{\|U_k\|>K\}}\right] \le
C\eta.
\end{align*}
Since $\eta>0$ is arbitrary, $\operatorname{tr}L_n(\eps)\convp0$. As
$L_n(\eps)$ is non-negative definite, this implies $L_n(\eps)\convp0$.
Therefore the Lindeberg condition is verified.

By Corollary~\ref{thm.mtg1}, we have
$$
\Gamma_n^{-1/2}(M_\infty-M_n)
\xrightarrow{d}
\mathcal N(0,I_d).
$$
By Lemma~\ref{lem:product}, we note that
$
n^{-A}\Phi(n)=\Gamma(I+A)^{-1}+O(n^{-1}).
$
Using the fact that $(M_n)$ is bounded in $L^2$ and that
$\|\Gamma_n^{-1/2}\|
=
O\bigl(n^{\lambda_{\max}(A)-1/2}(1+|\log n|^{d-1})\bigr)$ by \eqref{Gamma.n},
we have
$$
\Gamma_n^{-1/2}\Gamma(I+A)
\bigl(n^{-A}\Phi(n)-\Gamma(I+A)^{-1}\bigr)M_n
\convp0,
$$
as $\lambda_{\max}(A)<1$. Hence
$$
\Gamma_n^{-1/2}\Gamma(I+A)(n^{-A}S_n-W)
=
\Gamma_n^{-1/2}(M_n-M_\infty)+o_{\P}(1).
$$
Consequently,
$$
\Gamma_n^{-1/2}\Gamma(I+A)(n^{-A}S_n-W)
\xrightarrow{d}
\mathcal N(0,I_d).
$$
Combining this with \eqref{Gamma.n}, we obtain
$$
n^{-I/2+A}(n^{-A}S_n-W)
\xrightarrow{d}
\mathcal N_d(0,-\Xi).
$$

\subsection{Subcritical case: $\lambda_{\max}(A)<1/2$}

Recall that
$$
\Phi(n)^{-1}S_n=M_n=\sum_{j=1}^n\Phi(j)^{-1}U_j,
$$
with
$
\Phi(n)^{-1}=n^{-A}\Gamma(I+A)+o(\|n^{-A}\|).
$
Recall that $\Delta_k:=M_k-M_{k-1}=\Phi(k)^{-1}U_k$ and define
$$
\Lambda_n:=\sum_{k=1}^n\E[\Delta_k\Delta_k^\top].
$$
By Lemma~\ref{lem:martingale}, we have
\begin{align*}
\Lambda_n
&=
\sum_{k=1}^n
\Phi(k)^{-1}(\Sigma+\E[R_{k-1}])(\Phi(k)^{-1})^\top  \sim
\sum_{k=1}^n
\Phi(k)^{-1}\Sigma(\Phi(k)^{-1})^\top \\
&\sim
\sum_{k=1}^n
\Gamma(I+A)k^{-A}\Sigma k^{-A^\top}\Gamma(I+A)^\top.
\end{align*}
Since $\lambda_{\max}(A)<1/2$, we have
$$
\sum_{k=1}^n k^{-A}\Sigma k^{-A^\top}
\sim
\int_1^n t^{-A}\Sigma t^{-A^\top}\rmd t
\sim
n\,n^{-A}\Xi n^{-A^\top},
$$
where $\Xi$ is the unique solution to the Lyapunov matrix equation
$$
\left(\frac12I-A\right)\Xi+\Xi\left(\frac12I-A^\top\right)=\Sigma.
$$
Therefore
\begin{align}\label{Lambda.n.subcritical}
\Lambda_n
\sim
n\,\Gamma(I+A)n^{-A}\Xi n^{-A^\top}\Gamma(I+A)^\top.
\end{align}
In particular, since $\Xi$ is positive definite and $\Gamma(I+A)$ is invertible,
$\Lambda_n$ is eventually positive definite and $\|\Lambda_n\|\to\infty$.

We now verify the conditions of Corollary~\ref{thm.mtg2}. First, by Lemma~\ref{lem:martingale},
$$
\E[\Delta_k\Delta_k^\top\mid\mathcal F_{k-1}]
=
\Phi(k)^{-1}\Sigma(\Phi(k)^{-1})^\top
+
\Phi(k)^{-1}R_{k-1}(\Phi(k)^{-1})^\top.
$$
Let $B_k:=\Phi(k)^{-1}$. From \eqref{Lambda.n.subcritical}, the asymptotics
$B_k\sim k^{-A}\Gamma(I+A)$, and the positive definiteness of $\Sigma$, we have
$$
\sup_{1\le k\le n}\|\Lambda_n^{-1/2}B_k\|\to0,
\quad
\text{and}
\quad
\sum_{k=1}^{n}\|\Lambda_n^{-1/2}B_k\|^2\le C.
$$
Hence
\begin{align*}
&\E\Big[
\Big\|
\Lambda_n^{-1/2}
\sum_{k=1}^{n}
B_kR_{k-1}B_k^\top
\Lambda_n^{-1/2}
\Big\|
\Big]\le
\sum_{k=1}^{n}
\|\Lambda_n^{-1/2}B_k\|^2\E[\|R_{k-1}\|]
\to0,
\end{align*}
by a weighted Toeplitz argument, since $\E[\|R_k\|]\to0$. Consequently,
$$
\Lambda_n^{-1/2}
\sum_{k=1}^n
\E[\Delta_k\Delta_k^\top\mid\mathcal F_{k-1}]
\Lambda_n^{-1/2}
\convp I_d.
$$

We next verify the Lindeberg condition. Let $\eps>0$ and define
$$
L_n(\eps)
:=
\Lambda_n^{-1/2}
\sum_{k=1}^n
\E\!\left[
\Delta_k\Delta_k^\top
\mathbf 1_{\{\|\Lambda_n^{-1/2}\Delta_k\|>\eps\}}
\mid\mathcal F_{k-1}
\right]
\Lambda_n^{-1/2}.
$$
By \eqref{U.inq}, the family $(\|U_k\|^2)_{k\ge1}$ is uniformly integrable.
Thus, for every $\eta>0$, there exists $K>0$ such that
$$
\sup_{k\ge1}
\E\!\left[\|U_k\|^2\mathbf 1_{\{\|U_k\|>K\}}\right]<\eta.
$$
Since $\sup_{1\le k\le n}\|\Lambda_n^{-1/2}\Phi(k)^{-1}\|\to0$, for all
sufficiently large $n$ and all $1\le k\le n$, we have
$
\{\|\Lambda_n^{-1/2}\Delta_k\|>\eps\}
\subseteq
\{\|U_k\|>K\}.
$
Taking traces and expectations, we have
\begin{align*}
\E[\operatorname{tr}L_n(\eps)]
&\le
\sum_{k=1}^{n}
\|\Lambda_n^{-1/2}\Phi(k)^{-1}\|^2
\E\!\left[\|U_k\|^2\mathbf 1_{\{\|U_k\|>K\}}\right] \le C\eta.
\end{align*}
Since $\eta>0$ is arbitrary, $\operatorname{tr}L_n(\eps)\convp0$. As
$L_n(\eps)$ is non-negative definite, this implies $L_n(\eps)\convp0$.
Therefore the Lindeberg condition is verified.

All conditions of Corollary~\ref{thm.mtg2} are verified. Therefore,
$$
\Lambda_n^{-1/2}M_n\convd \mathcal N_d(0,I_d).
$$
Recalling that $M_n=\Phi(n)^{-1}S_n$ and
$\Phi(n)^{-1}\sim\Gamma(I+A)n^{-A}$, together with \eqref{Lambda.n.subcritical},
we obtain
$$
\bigl(
n\,\Gamma(I+A)n^{-A}\Xi n^{-A^\top}\Gamma(I+A)^\top
\bigr)^{-1/2}
\Gamma(I+A)n^{-A}S_n
\convd
\mathcal N_d(0,I_d).
$$
Equivalently,
$$
\frac{S_n}{\sqrt n}\convd \mathcal N_d(0,\Xi),
$$
where $\Xi$ is the unique solution to the Lyapunov equation
$$
\left(\frac12I-A\right)\Xi+\Xi\left(\frac12I-A^\top\right)=\Sigma.
$$
This completes the proof of part~(a) of Theorem~\ref{thm.1}.

\subsection{Critical case}

Recall that $A=QJQ^{-1}$ with $J=\lambda I+N$, where
$\operatorname{Re}(\lambda)=1/2$ and $N$ is the nilpotent matrix with ones on
the superdiagonal and zeros elsewhere. By Lemma~\ref{lem:martingale},
\begin{align*}
\Lambda_n
:=
\sum_{k=1}^n\E[\Delta_k\Delta_k^\top]
&\sim
\sum_{k=1}^n
\Phi(k)^{-1}\Sigma(\Phi(k)^{-1})^\top \sim
\sum_{k=1}^n
\Gamma(I+A)k^{-A}\Sigma k^{-A^\top}\Gamma(I+A)^\top.
\end{align*}
Let $\widetilde{\Sigma}:=Q^{-1}\Sigma(Q^{-1})^*$. Notice that
\begin{align*}
\sum_{k=1}^n k^{-A}\Sigma k^{-A^\top}
&\sim
\int_1^n t^{-A}\Sigma t^{-A^\top}\rmd t =
Q\int_1^n t^{-(\lambda I+N)}\widetilde{\Sigma}
t^{-(\bar\lambda I+N^*)}\rmd t\,Q^* \\
&=
Q\int_1^n t^{-1}\exp(-(\log t)N)\widetilde{\Sigma}
\exp(-(\log t)N^*)\rmd t\,Q^* \\
&=
Q\int_0^{\log n}
\exp(-uN)\widetilde{\Sigma}\exp(-uN^*)\rmd u\,Q^*.
\end{align*}
Using the fact that
$
\exp(-uN)=\sum_{k=0}^{d-1}\frac{(-u)^k}{k!}N^k,
$
the integrand becomes
$$
\exp(-uN)\widetilde{\Sigma}\exp(-uN^*)
=
\sum_{k=0}^{d-1}\sum_{l=0}^{d-1}
\frac{(-u)^{k+l}}{k!l!}N^k\widetilde{\Sigma}(N^*)^l.
$$
The $(i,j)$-entry of the above matrix polynomial has degree at most
$(d-i)+(d-j)$. More specifically, the coefficient of the leading term
$u^{(d-i)+(d-j)}$ in the $(i,j)$-entry is
$$
\frac{(-1)^{(d-i)+(d-j)}}{(d-i)!(d-j)!}\widetilde{\Sigma}_{d,d}.
$$
Therefore, using the fact that
$
\int_0^{\log n}u^m\rmd u=(\log n)^{m+1}/(m+1),
$
the $(i,j)$-entry of the integral is asymptotic to
$$
\frac{(-1)^{i+j}}{(d-i)!(d-j)!(2d-i-j+1)}
(\log n)^{2d-i-j+1}\widetilde{\Sigma}_{d,d}.
$$
Recall that
$$
D_n
:=
Q\operatorname{diag}
\big(
(\log n)^{d-\frac12},\ldots,(\log n)^{\frac32},(\log n)^{\frac12}
\big)Q^{-1}.
$$
Since $\Gamma(I+A)=Q\Gamma(I+J)Q^{-1}$ and $\Gamma(I+J)$ is upper triangular
with diagonal entries $\Gamma(1+\lambda)$, the off-diagonal terms of
$\Gamma(I+J)$ are of lower logarithmic order after normalization by $D_n$.
Therefore
$$
D_n^{-1}\Lambda_n(D_n^{-1})^*
\to
|\Gamma(1+\lambda)|^2 Q\mathcal VQ^*,
$$
where we recall that
$$
\mathcal V_{i,j}
:=
\frac{(-1)^{i+j}}{(d-i)!(d-j)!(2d-i-j+1)}
\widetilde{\Sigma}_{d,d}.
$$

We now verify the martingale CLT for $D_n^{-1}M_n$. By Lemma~\ref{lem:martingale},
$$
\E[\Delta_k\Delta_k^\top\mid\mathcal F_{k-1}]
=
\Phi(k)^{-1}\Sigma(\Phi(k)^{-1})^\top
+
\Phi(k)^{-1}R_{k-1}(\Phi(k)^{-1})^\top,
$$
with $\E[\|R_{k-1}\|]\to0$. As in the supercritical and subcritical cases, the
second term is negligible after normalization, and hence
$$
D_n^{-1}
\sum_{k=1}^n
\E[\Delta_k\Delta_k^\top\mid\mathcal F_{k-1}]
(D_n^{-1})^*
\convp
|\Gamma(1+\lambda)|^2Q\mathcal VQ^*.
$$

We next verify the Lindeberg condition. Since $\Delta_k=\Phi(k)^{-1}U_k$, we have
$$
\|D_n^{-1}\Delta_k\|
\le
\|D_n^{-1}\Phi(k)^{-1}\|\|U_k\|.
$$
We first note that
\begin{align}\label{eq:norm-to-zero-critical}
\sup_{1\le k\le n}\|D_n^{-1}\Phi(k)^{-1}\|\to0.
\end{align}
Indeed, using $\Phi(k)^{-1}\sim\Gamma(I+A)k^{-A}$ and the Jordan-block estimate,
$$
\|D_n^{-1}\Phi(k)^{-1}\|
\le
C(\log n)^{-1/2}(1+|\log k|^{d-1})k^{-1/2}.
$$
The right-hand side is bounded by $C(\log n)^{-1/2}$ uniformly over
$1\le k\le n$, and hence \eqref{eq:norm-to-zero-critical} follows.

Let $\eps>0$ and define
$$
L_n(\eps)
:=
D_n^{-1}
\sum_{k=1}^n
\E\!\left[
\Delta_k\Delta_k^\top
\mathbf 1_{\{\|D_n^{-1}\Delta_k\|>\eps\}}
\mid\mathcal F_{k-1}
\right]
(D_n^{-1})^*.
$$
By \eqref{U.inq}, the family $(\|U_k\|^2)_{k\ge1}$ is uniformly integrable.
Thus, for every $\eta>0$, there exists $K>0$ such that
$$
\sup_{k\ge1}\E\!\left[\|U_k\|^2\mathbf 1_{\{\|U_k\|>K\}}\right]<\eta.
$$
By \eqref{eq:norm-to-zero-critical}, for all sufficiently large $n$ and all
$1\le k\le n$, we have
$
\{\|D_n^{-1}\Delta_k\|>\eps\}
\subseteq
\{\|U_k\|>K\}.
$
Taking traces and expectations, we obtain
\begin{align*}
\E[\operatorname{tr}L_n(\eps)]
&\le
\sum_{k=1}^n
\|D_n^{-1}\Phi(k)^{-1}\|^2
\E\!\left[\|U_k\|^2\mathbf 1_{\{\|U_k\|>K\}}\right] \le
C\eta,
\end{align*}
where we used the boundedness of
$
\sum_{k=1}^n\|D_n^{-1}\Phi(k)^{-1}\|^2.
$
Since $\eta>0$ is arbitrary, $\operatorname{tr}L_n(\eps)\convp0$. As
$L_n(\eps)$ is non-negative definite, this implies $L_n(\eps)\convp0$.
Therefore the Lindeberg condition is verified.

Applying Proposition~\ref{thm.mtg} to the martingale difference array
$X_{n,k}:=D_n^{-1}\Delta_k$, $k=1,\dots,n$, we obtain
$$
D_n^{-1}M_n
\xrightarrow{d}
\mathcal N\left(0,|\Gamma(1+\lambda)|^2Q\mathcal VQ^*\right).
$$
It remains to transfer the result to $D_n^{-1}n^{-A}S_n$. By Lemma~\ref{lem:product},
$$
\Phi(n)^{-1}S_n
=
\bigl(\Gamma(I+A)+O(n^{-1})\bigr)n^{-A}S_n.
$$
Since $\Gamma(I+A)=Q\Gamma(I+J)Q^{-1}$, and only the diagonal part of
$\Gamma(I+J)$ contributes at the leading logarithmic order under the
normalization $D_n$, we have
$
D_n^{-1}M_n
=
\Gamma(1+\lambda)D_n^{-1}n^{-A}S_n+o_{\P}(1).
$
Consequently,
$$
D_n^{-1}n^{-A}S_n
\xrightarrow{d}
\mathcal N(0,Q\mathcal VQ^*),
$$
which proves the critical case.

\begin{appendix}
\section{Non-convergence theorems}\label{appendix:noncov}
We will apply the following non-convergence result for continuous-time processes to obtain a discrete-time version which is used in the proof of Theorem \ref{thm.nonlinear}(c).

Consider a càdlàg process $Z$ in $\mathbb{R}^d$. We define $Z_{t-} := \lim_{s \uparrow t} Z_s$ as the \textit{left limit} of $Z$ at $t>0$, and let $\Delta Z_t := Z_t - Z_{t-}$ be the \textit{jump} of $Z$ at $t>0$. For a càdlàg real‑valued process $Z$, the \textit{total variation} on an interval $(s,t]$ is denoted $V(Z,(s,t])$ and defined by
$$
V(Z,(s,t]) = \sup_{\mathcal{P}} \sum_{i=1}^n |Z_{t_i} - Z_{t_{i-1}}|,
$$
where the supremum is taken over all partitions $\mathcal{P} = \{s = t_0 < t_1 < \cdots < t_n = t\}$ of $(s,t]$. If $V(Z,(0,t]) < \infty$ for every $t$, we say that $Z$ has \textit{finite variation}.

The \textit{quadratic variation} of a càdlàg semimartingale $Z$ on an interval $(s,t]$ is denoted $[Z]_{s,t}$ and defined as the limit in probability of $\sum_{i=1}^n |Z_{t_i} - Z_{t_{i-1}}|^2$ as the mesh of the partition $\mathcal{P} = \{s = t_0 < t_1 < \cdots < t_n = t\}$ of $(s,t]$ tends to $0$. The quadratic variation of $Z$ on $(0,t]$ is written $[Z]_t$, and whenever $s \le t$ we have $[Z]_{s,t} = [Z]_t - [Z]_s$.
If $M$ is a locally square integrable local martingale, then its \textit{predictable quadratic variation} process is well defined. It is the predictable process, denoted $\langle M \rangle$, for which $[M] - \langle M \rangle$ is a local martingale.

\begin{proposition}[Raimond-Tarres 2023, \cite{RT2023}, Theorem 2.1.1]\label{thm:RT_specialized}
Let $(\mathcal F_t)_{t\ge0}$ be a complete right-continuous filtration, and let
$(X_t)_{t\ge0}$ be a c\`adl\`ag $\mathbb R^d$-valued process of the form
$$
X_t-X_0=\int_0^t F_s\rmd s+M_t+R_t,
$$
where $F$ is progressively measurable, $M$ is a c\`adl\`ag locally square-integrable martingale, and $R$ is a c\`adl\`ag adapted process of finite variation. Let
$\Gamma\in\mathcal F_\infty$, and let $\alpha:[0,\infty)\to(0,\infty)$ be
nonincreasing càdlàg function with $\lim_{t\to\infty}\alpha(t)=0$.

Assume that there exist a constant $\rho>0$, a finite random time $T_0$, and a nonnegative
càdlàg adapted process $E_t$ such that a.s. on $\Gamma$,

\begin{itemize}
\item[(i)]
$\langle X_{t-},F_t\rangle\ge0\quad\text{for all $t\ge T_0$ such that $\|X_{t-}\|<\rho$ }$;

\item[(ii)]
$
\alpha(t)^2-\alpha(t+1)^2
=
O(\langle M\rangle_{t,t+1})
$ as $t\to\infty$;

\item[(iii)]
$
\langle M\rangle_{t,\infty}=O(\alpha(t)^2)
$ as $t\to\infty$; 

\item[(iv)]
$
V(R,(t,\infty))=o(\alpha(t))$ as $t\to\infty$;

\item[(v)]
for all sufficiently large $t$ and for every stopping time $S> t$,
$$
\E[\|\Delta M_S\|^2\mathbf 1_{\{S<\infty\}}\mid\mathcal F_t]
\le
E_t\,\mathbb P(S<\infty\mid\mathcal F_t)
\quad\text{and}\quad
E_t=O(\alpha(t)^2).
$$
\end{itemize}

Then
$
\mathbb P(\Gamma\cap\{X_t\to0\})=0.
$
\end{proposition}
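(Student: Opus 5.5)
The idea is to argue by contradiction, using a conditional escape estimate in the spirit of Pemantle and Tarrès. Suppose $\P(\Gamma\cap\{X_t\to0\})>0$. By Lévy's $0$--$1$ law, $\P(\Gamma\cap\{X_t\to0\}\mid\mathcal F_t)\to\1_{\Gamma\cap\{X_t\to0\}}$ a.s. It therefore suffices to find $\delta>0$ such that, for all large $t$ on the relevant event, the process leaves the ball of radius $c\,\alpha(t)$ after time $t$ with conditional probability at least $\delta$. Since $\alpha(t)\to0$, leaving this ball alone is not incompatible with $X_t\to0$. The real aim is a uniform lower bound
\[
\P\big(\limsup_{s\to\infty}\|X_s\|\ge c\,\alpha(t)/2\ \big|\ \mathcal F_t\big)\ge\delta .
\]
This bound holds conditionally on $\mathcal F_t$, and $c\,\alpha(t)/2$ is a positive constant given $\mathcal F_t$. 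Hence, for all large $t$, $\P(X_s\to0\mid\mathcal F_t)\le 1-\delta$ on the event, which contradicts Lévy's law.

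\textbf{Localization.} First I would reduce to a localized setting. Choose a deterministic $t_0$ and constants so that the event
\[
\{T_0\le t_0\}\cap\{\text{the $O(\cdot)$ bounds in (ii)--(v) hold with fixed constants after $t_0$}\}\cap\Gamma\cap\{X_t\to0\}
\]
has positive probability. Then introduce the stopping time $\tau$ at which any of these bounds fails or $\|X_{t-}\|\ge\rho$. All subsequent computations are done for the process stopped at $\tau$. This turns the "a.s. on $\Gamma$" hypotheses into hypotheses that hold up to a stopping time.

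\textbf{The quadratic functional.} For fixed large $t$, set $S:=\inf\{s>t:\|X_s\|\ge c\,\alpha(t)\}\wedge\tau$ and apply Itô's formula for semimartingales to $\|X\|^2$ on $(t,S\wedge u]$:
\[
\|X_{S\wedge u}\|^2-\|X_t\|^2=2\!\int\!\langle X_{s-},F_s\rangle\rmd s+2\!\int\!\langle X_{s-},\rmd M_s\rangle+2\!\int\!\langle X_{s-},\rmd R_s\rangle+[M+R]_{t,S\wedge u}.
\]
The individual terms are handled as follows.
\begin{itemize}
\item The drift term is nonnegative by (i).
\item The stochastic integral has zero conditional mean.
\item The $R$-cross term is bounded by $2c\,\alpha(t)\,V(R,(t,\infty))=o(\alpha(t)^2)$ by (iv).
\item In $[M+R]$, the $[R]$ part and the cross variation are $o(\alpha(t)^2)$ by (iv) and Kunita--Watanabe.
\item $\E[[M]_{t,S\wedge u}\mid\mathcal F_t]=\E[\langle M\rangle_{t,S\wedge u}\mid\mathcal F_t]$.
\end{itemize}
Summing (ii) over unit intervals gives $\alpha(t)^2\le C\langle M\rangle_{t,\infty}$, so on $\{S=\infty\}$ the bracket contributes at least $c'\alpha(t)^2$. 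Combining these and letting $u\to\infty$, I obtain
\[
\E\big[\|X_{S}\|^2\1_{\{S<\infty\}}\mid\mathcal F_t\big]+\E\big[\limsup_{u}\|X_u\|^2\1_{\{S=\infty\}}\mid\mathcal F_t\big]\ \ge\ c'\alpha(t)^2\,\P(S=\infty\mid\mathcal F_t)-o(\alpha(t)^2).
\]
On $\{S<\infty\}$ I bound $\|X_S\|\le c\,\alpha(t)+\|\Delta M_S\|+\|\Delta R_S\|$ and use (v) to get
\[
\E[\|X_S\|^2\1_{\{S<\infty\}}\mid\mathcal F_t]\le C''\alpha(t)^2\,\P(S<\infty\mid\mathcal F_t).
\]
Choosing $c$ small relative to $c'$, $C''$, these two bounds force one of two things with conditional probability at least $\delta$: either $S<\infty$, so the process exits the $c\,\alpha(t)$-ball, or $\limsup\|X_u\|$ stays of order $\alpha(t)$. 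Iterating from $X_S$ with the tail martingale, and using (iii) to keep the tail fluctuation comparable, should then yield the displayed lower bound and hence the contradiction.

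\textbf{Main obstacle.} I expect the hardest step to be this second-moment-to-probability conversion. Alternatively, one can argue that on $\{S=\infty\}$ the mass $c'\alpha(t)^2$ cannot be absorbed while $X\to0$. The difficulty is that the upper bound on $\|X_S\|$ at the exit time needs the uniform jump control (v), and the lower bound needs (ii) to hold conditionally rather than in expectation. If the direct argument fails, the first thing I would try is Tarrès' device of comparing $\|X\|^2$ with $\langle M\rangle_{t,\cdot}+\alpha(\cdot)^2$, building a supermartingale whose optional stopping yields the uniform $\delta$. In any case, this is exactly Theorem~2.1.1 of \cite{RT2023}, whose proof I would follow for these estimates.
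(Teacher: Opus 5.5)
The paper gives no proof of this statement. It is imported from \cite{RT2023} (Theorem~2.1.1), apparently in specialised form, so your closing deferral is all the paper does. Read as an argument in its own right, however, your sketch has a genuine gap at the decisive step.

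Write $K_{\mathrm{(ii)}},K_{\mathrm{(iii)}}$ for the constants in (ii) and (iii). Summing (ii) gives $\alpha(t)^2\le K_{\mathrm{(ii)}}\langle M\rangle_{t,\infty}\le K_{\mathrm{(ii)}}K_{\mathrm{(iii)}}\alpha(t)^2$.

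Your $\|X\|^2$ computation is essentially fine, once the localisation is made adapted. Since (iii) and (iv) are tail conditions, you must stop at the first $u$ with $\langle M\rangle_{s,u}>K\alpha(s)^2$ or $V(R,(s,u])>\eta\alpha(s)$ for some $s\in[t_0,u]$, not at ``the time (iii) fails''. Even then, it yields only an \emph{escape} bound. On $\{S=\infty\}$ one has $\limsup_u\|X_u\|\le c\,\alpha(t)$, so for $c^2<K_{\mathrm{(ii)}}^{-1}$ the inequality gives $\P(S<\infty\mid\mathcal F_t)\ge\delta_1$ and nothing more. The alternative ``$\limsup\|X_u\|$ stays of order $\alpha(t)$'' is not something it produces.

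As you note, exiting the ball of radius $c\,\alpha(t)$ is compatible with $X_t\to0$. Everything therefore rests on a non-return estimate, which you leave as ``iterating from $X_S$ \dots\ should yield''. The obvious ways to fill this do not work:
\begin{itemize}
\item Iterating the escape step is vacuous: at time $S$ the process is already outside the ball of radius $c\,\alpha(S)\le c\,\alpha(t)$.
\item Redoing the $\|X\|^2$ computation after $S$ fails uniformly in $t$. Its martingale part $2\int\langle X_{s-},\rmd M_s\rangle$ has bracket up to $4\rho^2\langle M\rangle_{S,\infty}$, i.e.\ fluctuations of order $\rho\,\alpha(t)\gg c^2\alpha(t)^2$.
\item A two-sided Doob/Chebyshev bound on the fluctuations of $\|X\|$ would need $c^2>4K_{\mathrm{(iii)}}\ge 4K_{\mathrm{(ii)}}^{-1}$. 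This is incompatible with the smallness of $c$ your escape step requires.
\end{itemize}

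What is missing is a \emph{one-sided} estimate for the norm itself. Let $g$ be a convex $C^2$ function with $g(x)=\|x\|$ for $\|x\|\ge c\,\alpha(t)/4$ and $g\le c\,\alpha(t)/4$ on that ball. Let $\sigma:=\inf\{u\ge S:\|X_u\|\le c\,\alpha(t)/2\}$. Apply It\^o's formula to $g$; by convexity all second-order and jump corrections are nonnegative, so for $S\le u\le\sigma$,
\[
g(X_u)-g(X_S)\ \ge\ \int_S^u\Big\langle\frac{X_{s-}}{\|X_{s-}\|},F_s\Big\rangle\rmd s+N_u-V(R,(S,\infty)),
\qquad
N_u:=\int_S^{u\wedge\sigma}\Big\langle\frac{X_{s-}}{\|X_{s-}\|},\rmd M_s\Big\rangle .
\]
Each term is controlled as follows.
\begin{itemize}
\item The drift term is nonnegative by (i), on the localised event where $\|X\|<\rho$.
\item $V(R,(S,\infty))\le\eta\,\alpha(t)$ by (iv), with $\eta<c/2$.
\item Crucially, independently of how far $X$ travels, $\langle N\rangle_\infty\le\langle M\rangle_{S,\infty}\le K_{\mathrm{(iii)}}\alpha(t)^2$ by (iii).
\end{itemize}
Since $g(X_S)\ge c\,\alpha(t)$ and $g(X_\sigma)\le c\,\alpha(t)/2$, on $\{\sigma<\infty\}$ this forces $\inf_uN_u\le-(c/2-\eta)\alpha(t)$. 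The Cantelli-type maximal inequality $\P(\inf_uN_u\le-\lambda\mid\mathcal F_S)\le v/(v+\lambda^2)$, with $v:=\E[\langle N\rangle_\infty\mid\mathcal F_S]$, then yields
\[
\P(\sigma=\infty\mid\mathcal F_S)\ \ge\ \frac{(c/2-\eta)^2}{K_{\mathrm{(iii)}}+(c/2-\eta)^2}=:\delta_2>0
\]
for \emph{every} $c>0$. Hence, for all large $t$ on the localised event, up to $\P(\tau<\infty\mid\mathcal F_t)\to0$,
\[
\P\big(\liminf_u\|X_u\|\ge c\,\alpha(t)/2\mid\mathcal F_t\big)\ge\delta_1\delta_2 .
\]
L\'evy's $0$--$1$ law then closes the argument exactly as in your first paragraph.

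Note the division of labour: (ii) and (v) serve only the escape step (lower bound on the bracket, control of the overshoot $\|X_S\|$), while (i), (iii), (iv) serve only non-return. The ``second-moment-to-probability conversion'' you flag as the main obstacle is routine; the one-sided non-return bound is the real content.
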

It is worth mentioning that other variations of above the non-convergence results have appeared in \cite[Theorem 6.14]{RN2021} and \cite[Theorem 5.5]{CNV2022} with applications to random processes with reinforcement. Using the continuous-time result above, we obtain the following discrete-time version, which is a slight modification of Theorem 2.2.1 in \cite{RT2023}. The main difference is that the stopping-time jump condition \textup{(v)} is verified directly from a pathwise envelope condition together with a conditional \(L^2\)-moment condition (see conditions \textup{(b)} and \textup{(c)} below), rather than from a conditional \(L^a\)-moment condition for some \(a>2\), as in Theorem 2.2.1 of \cite{RT2023}. The proof proceeds by embedding the discrete-time process into the continuous-time framework, following the same strategy as in Section 4 of \cite{RT2023}. 
\begin{proposition}
        \label{lem:discrete_antitrap_from_CT}
Let $(\mathcal F_n)_{n\ge0}$ be a filtration, and let $(G_n)_{n\ge0}$ be an
$\mathbb R^d$-valued adapted process satisfying
$$
G_{n+1}-G_n=\upsilon_{n+1}F_n+c_{n+1}(\varepsilon_{n+1}+r_{n+1}),
\quad n\ge0,
$$
where $(\upsilon_n)_{n\ge1}$ and $(c_n)_{n\ge1}$ are deterministic nonnegative
sequences such that $\sum_{n=1}^\infty c_n^2<\infty$ and $\sum_{n=1}^{\infty}\upsilon_n=\infty$; for each $n\ge 0$, $F_n$, $\eps_{n+1}$ and $r_{n+1}$ are $\mathcal F_n$-measurable, and
$$
\E[\varepsilon_{n+1}\mid\mathcal F_n]=0, \quad \E\|\varepsilon_n\|^2<\infty.
$$

Let $\Gamma\in\mathcal F_\infty$ be an event, where
$\mathcal F_\infty:=\sigma\bigl(\bigcup_{n\ge0}\mathcal F_n\bigr)$.
Assume that there exist a deterministic constant $\rho>0$, a finite random
integer $T_0$, a deterministic nonincreasing sequence $(b_n)_{n\ge0}$, and an
event $\Omega_*\in\mathcal F_\infty$ with $\mathbb P(\Omega_*)=1$ such that the
following hold.

\begin{itemize}
\item[(a)] On $\Gamma\cap\Omega_*$, for all $n\ge T_0$,
$$
\langle G_n, F_n\rangle\ge0
\text{ whenever }\|G_n\|<2\rho; \quad \text{and}\quad
\upsilon_{n+1}\|F_n\|<\rho.
$$

\item[(b)] There exist deterministic constants $a_0,a_1>0$ such that on $\Gamma$,
$$
0<a_0\le \liminf_{n\to\infty}\E[\|\varepsilon_{n+1}\|^2\mid\mathcal F_n]
\le
\limsup_{n\to\infty}\E[\|\varepsilon_{n+1}\|^2\mid\mathcal F_n]
\le a_1<\infty.
$$

\item[(c)] On $\Omega_*$, for all sufficiently large $n$,
$
c_{n+1}\|\varepsilon_{n+1}\|\le b_n.
$

\item[(d)] $
b_n^2=O(\alpha_n^2).
$ with 
$
\alpha_n:=\left(\sum_{k\ge n+1}c_k^2\right)^{1/2}.
$

\item[(e)] On $\Gamma$,
$
\sum_{n=1}^\infty \|r_n\|^2<\infty.
$
\end{itemize}

Then
$
\mathbb P\bigl(\Gamma\cap\{G_n\to0\}\bigr)=0.
$
\end{proposition}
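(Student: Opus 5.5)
The plan is to embed the discrete-time recursion into continuous time, following the strategy of Section~4 of \cite{RT2023}, and then apply Proposition~\ref{thm:RT_specialized}. Let $\alpha_n^2=\sum_{k\ge n+1}c_k^2$. Introduce the time grid $s_n:=\sum_{k\le n}c_k^2$. If $\sum c_n^2$ is finite, I would reparametrize by a bijection of $[0,\sum c_k^2)$ onto $[0,\infty)$. A simpler and, I think, cleaner choice is the integer grid $t=n$: set $X_t:=G_{\lfloor t\rfloor}$ and $\mathcal F_t:=\mathcal F_{\lfloor t\rfloor}$, completed and right-continuous. Then
\[
X_t-X_0=\int_0^t F_s\rmd s+M_t+R_t,
\]
where $F_s:=\upsilon_{n+1}F_n\,\mathbf 1_{\{s\in[n,n+1)\}}$ is taken only as a bookkeeping device. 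Since jumps occur at integer times, I would instead put the drift into a pure-jump finite-variation piece handled by condition~(i). So I will follow \cite{RT2023}: spread the drift increment $\upsilon_{n+1}F_n$ linearly over $[n,n+1)$, and put the discrepancy between the linear interpolation and the piecewise constant path into $X$ itself. Concretely, I take $X_t$ to be the process $G_n+(t-n)\upsilon_{n+1}F_n$ on $[n,n+1)$, with jumps $c_{n+1}(\varepsilon_{n+1}+r_{n+1})$ at time $n+1$. Then $M_t:=\sum_{k\le t}c_k\varepsilon_k$ and $R_t:=\sum_{k\le t}c_kr_k$, and $X_n=G_n$. Hence $X_t\to0$ iff $G_n\to0$, because $\upsilon_{n+1}\|F_n\|\to0$ on $\{G_n\to0\}$, which needs the boundedness in (a); if needed I would work on $\Gamma\cap\{G_n\to0\}$ throughout.

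Next I verify (i)--(v) with $\alpha(t):=\alpha_{\lfloor t\rfloor}$. For (i), when $\|X_{t-}\|<\rho$ and $t\in(n,n+1)$, we have $\|G_n\|\le\|X_{t-}\|+\upsilon_{n+1}\|F_n\|<2\rho$. Then
\[
\langle X_{t-},\upsilon_{n+1}F_n\rangle=\upsilon_{n+1}\langle G_n,F_n\rangle+(t-n)\upsilon_{n+1}^2\|F_n\|^2\ge0
\]
by (a). This is precisely why (a) uses radius $2\rho$. For (ii) and (iii), $\langle M\rangle_{n,n+1}=c_{n+1}^2\E[\|\varepsilon_{n+1}\|^2\mid\mathcal F_n]$. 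By (b) this is comparable to $c_{n+1}^2=\alpha_n^2-\alpha_{n+1}^2$ on $\Gamma$, eventually, and summing gives $\langle M\rangle_{n,\infty}\le a_1'\alpha_n^2$. For (iv), Cauchy--Schwarz and (e) give
\[
V(R,(n,\infty))\le\sum_{k>n}c_k\|r_k\|\le\alpha_n\Bigl(\sum_{k>n}\|r_k\|^2\Bigr)^{1/2}=o(\alpha_n).
\]

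The main obstacle is (v), since only an $L^2$ bound plus the pathwise envelope (c) is available. For a stopping time $S>t$ taking integer values $k+1$, we have $\|\Delta M_S\|=c_{k+1}\|\varepsilon_{k+1}\|\le b_k\le b_{\lfloor t\rfloor}$, since $b$ is nonincreasing. This holds once $k$ exceeds the random index from (c), on $\Omega_*$. So I would set $E_t:=b_{\lfloor t\rfloor}^2$, which is $O(\alpha(t)^2)$ by (d), and then
\[
\E[\|\Delta M_S\|^2\mathbf 1_{\{S<\infty\}}\mid\mathcal F_t]\le E_t\,\P(S<\infty\mid\mathcal F_t).
\]
The difficulty is that the index from (c) is random and not uniform. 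To handle it, I would replace $\varepsilon_{n+1}$ by the truncation $\varepsilon_{n+1}\mathbf 1_{\{c_{n+1}\|\varepsilon_{n+1}\|\le b_n\}}$, recentred by its conditional mean, and put the recentring term and the discarded part into $R$. By (c) the discarded part vanishes eventually a.s., so it contributes finitely many terms, which do not affect $o(\alpha)$ asymptotics as the tail is eventually zero. The recentring mean is $\E[\varepsilon_{n+1}\mathbf 1_{\{c_{n+1}\|\varepsilon_{n+1}\|>b_n\}}\mid\mathcal F_n]$, and I must show it is summably small relative to $\alpha_n$. I expect to control this via conditional Borel--Cantelli: $\sum\P(c_{n+1}\|\varepsilon_{n+1}\|>b_n\mid\mathcal F_n)<\infty$ a.s. Failing that, I would use a stopping argument, restricting to the event that (c) holds from a deterministic $N$ onward and letting $N\to\infty$, and apply the proposition with the modified $\Gamma_N:=\Gamma\cap\{\text{(c) holds for } n\ge N\}$. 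This second route seems most robust. With truncation the conditional variance changes by $o(1)$ on $\Gamma_N$, so (b) persists. Proposition~\ref{thm:RT_specialized} then gives $\P(\Gamma_N\cap\{G_n\to0\})=0$ for each $N$, and taking the union over $N$ finishes the proof.
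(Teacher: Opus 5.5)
\textbf{Where your proposal matches the paper.} Your embedding is exactly the paper's: $X_t=G_n+(t-n)\upsilon_{n+1}F_n$ on $[n,n+1)$, $M_t=\sum_{k\le t}c_k\varepsilon_k$, $R_t=\sum_{k\le t}c_kr_k$, $\alpha(t)=\alpha_{\lfloor t\rfloor}$, $E_t=b_{\lfloor t\rfloor}^2$. Your checks of (i)--(iv) are also the paper's. For (v), the paper simply takes $\E[\,\cdot\mid\mathcal F_t]$ of the pathwise bound $\|\Delta M_S\|\le b_{\lfloor t\rfloor}$. You are right that this is illegitimate when the index in (c) is random and not a stopping time. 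The conditional expectation averages over continuations on which the bound fails after time $t$.

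\textbf{The gap in your repair.} The step that fails is your claim that truncation changes the conditional variance by $o(1)$, so that (b), and hence (ii), survive. Put $B_k:=\{c_k\|\varepsilon_k\|>b_{k-1}\}$ and $p_k:=\P(B_k\mid\mathcal F_{k-1})$. Lévy's conditional Borel--Cantelli lemma gives $\sum_k p_k<\infty$ a.s. This does dispose of your recentring term: by Cauchy--Schwarz and (b), eventually on $\Gamma$,
\[
\sum_{k>n}c_k\|\E[\varepsilon_k\mathbf 1_{B_k}\mid\mathcal F_{k-1}]\|\le\sqrt{2a_1}\,\alpha_n\Bigl(\sum_{k>n}p_k\Bigr)^{1/2}=o(\alpha_n).
\]
But it says nothing about $\E[\|\varepsilon_k\|^2\mathbf 1_{B_k}\mid\mathcal F_{k-1}]$, because rare large values can carry a fixed share of the conditional variance. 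Passing to $\Gamma_N$ does not help either. The conditional variance of the truncated noise is a property of the conditional law, not of the realized path.

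\textbf{The statement is false as written.} No argument can close this step, because with a random index in (c) the statement fails. Take $d=1$, $G_0=0$, $F_n\equiv0$, $r_n\equiv0$, $\upsilon_n=c_n=1/n$ and $\Gamma=\Omega$. Let $(\varepsilon_{n+1})_{n\ge0}$ be independent with $\P(\varepsilon_{n+1}=n+2)=\P(\varepsilon_{n+1}=-(n+2))=\frac{1}{2(n+2)^2}$ and $\varepsilon_{n+1}=0$ otherwise, and set $\mathcal F_n:=\sigma(\varepsilon_1,\dots,\varepsilon_n)$.
\begin{itemize}
\item $\E[\varepsilon_{n+1}^2\mid\mathcal F_n]=1$, so (b) holds, and (a) and (e) are trivial.
\item By Borel--Cantelli, $\Omega_*:=\{\varepsilon_n\neq0\text{ for finitely many }n\}$ has probability one. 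On it, (c) holds with $b_n:=\alpha_n$, so (d) holds.
\item Yet $G_n\equiv0$ on $\{\varepsilon_n=0\ \forall n\}$, which has probability $\prod_{k\ge2}(1-k^{-2})=\frac12$.
\end{itemize}
In this example your truncated noise vanishes identically, so (ii) fails for it. For the original martingale, (v) fails instead: the first jump after time $t$ has size of order $1$, whereas $\alpha(t)^2\asymp 1/t$.

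\textbf{What would fix it.} Some hypothesis must be strengthened.
\begin{itemize}
\item If the index in (c) is deterministic, your direct bound (equivalently the paper's) already gives (v), and no truncation is needed.
\item Alternatively, add a conditional Lindeberg condition: $\E[\|\varepsilon_{n+1}\|^2\mathbf 1_{\{c_{n+1}\|\varepsilon_{n+1}\|>b_n\}}\mid\mathcal F_n]\to0$ on $\Gamma$. This follows from a conditional $L^a$ bound with $a>2$ when $b_n/c_{n+1}\to\infty$, as in Theorem~2.2.1 of \cite{RT2023}. Under it, your route works as written. The recentred truncation $\tilde\varepsilon_{n+1}:=\varepsilon_{n+1}\mathbf 1_{B_{n+1}^c}-\E[\varepsilon_{n+1}\mathbf 1_{B_{n+1}^c}\mid\mathcal F_n]$ has the deterministic envelope $c_{n+1}\|\tilde\varepsilon_{n+1}\|\le 2b_n$. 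Its conditional variance differs from that of $\varepsilon_{n+1}$ by at most twice the Lindeberg term. The discarded and recentring parts go into $R$ as above, and no passage to $\Gamma_N$ is needed.
\end{itemize}
In the proof of Theorem~\ref{thm.nonlinear}(c), condition (c) is verified only eventually via Borel--Cantelli, that is, with a random index. So the paper's argument needs the same strengthening.

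A minor point: on $\{G_n\to0\}$, the fact that $\upsilon_{n+1}F_n\to0$ follows from the recursion together with (c)--(e), not from the bound in (a).
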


\begin{proof}
    Set
$
\Gamma_*:=\Gamma\cap\Omega_*.
$
Since $\Omega_*\in\mathcal F_\infty$ and $\mathbb P(\Omega_*)=1$, it is enough to
show that
$
\mathbb P\bigl(\Gamma_*\cap\{G_n\to0\}\bigr)=0.
$

For $t\ge0$, let $n:=\lfloor t\rfloor$, and define
$
\mathcal F_t:=\mathcal F_n.
$
For $t\in[n,n+1)$, set
$$
F_t:=\upsilon_{n+1}F_n, \quad M_t:=\sum_{k=1}^{n}c_k\varepsilon_k, \quad R_t:=\sum_{k=1}^{n}c_k r_k,\quad
 X_t:=G_n+(t-n)F_t.
$$
Then
 $$X_t-X_0=\int_0^t F_s\rmd s+M_t+R_t
\quad\text{for all }t\ge0.
$$
Moreover, $M$ is a square-integrable c\`adl\`ag martingale and $R$ is a c\`adl\`ag
adapted process with finite variation.
Define
$$
\alpha(t):=\alpha_n:=\Big(\sum_{k\ge n+1}c_k^2\Big)^{1/2} \quad \text{and}\quad E_t:=b_n^2 \quad\text{for }t\in[n,n+1).
$$
We note that $\alpha(t)$ is nonincreasing and tends to $0$ as $t\to\infty$. Moreover, by assumption \textup{(d)},
$$
E_t=O(\alpha(t)^2)\quad\text{on \; $\Gamma_*$}.$$

Fix $t\in[n,n+1)$ with $n\ge T_0$, and let $S\ge t$ be any stopping time.
The jumps of $M$ occur only at integer times. Thus, if $S$ is not an integer,
then $\Delta M_S=0$, while if $S=k+1$ for some integer $k\ge n$, then
$
\Delta M_S=\Delta M_{k+1}=c_{k+1}\varepsilon_{k+1}.
$
By assumption \textup{(c)}, on $\Gamma_*$ and for all sufficiently large $n$,
$$
\|\Delta M_{k+1}\|
=
c_{k+1}\|\varepsilon_{k+1}\|
\le
b_k
\le
b_n,
$$
as $(b_n)$ is nonincreasing. Therefore, on $\Gamma_*$ and for all
sufficiently large $n$,
$$
\|\Delta M_S\|^2\mathbf 1_{\{S<\infty\}}
\le
b_n^2\,\mathbf 1_{\{S<\infty\}}
=
E_t\,\mathbf 1_{\{S<\infty\}}.
$$
Taking conditional expectation with respect to $\mathcal F_t=\mathcal F_n$, we obtain
$$
\E[\|\Delta M_S\|^2\mathbf 1_{\{S<\infty\}}\mid\mathcal F_t]
\le
E_t\,\mathbb P(S<\infty\mid\mathcal F_t).
$$

Fix $t\in[n,n+1)$ with $n\ge T_0$, and suppose that
$
\| X_{t-}\|<\rho.
$
Since
$X_{t-}=G_n+(t-n)\upsilon_{n+1}F_n,$ by assumption \textup{(a)}, on $\Gamma_*$ we have
$
\|G_n\|
\le
\| G_{t-}\|+\upsilon_{n+1}\|F_n\|
<
\rho+\rho=2\rho,
$
and thus $\langle G_n, F_n\rangle\ge 0$. 
Consequently,
$$
\langle  X_{t-}, F_t\rangle
=
\upsilon_{n+1}\langle G_n,F_n\rangle
+
(t-n)\upsilon_{n+1}^2\|F_n\|^2
\ge0.
$$

Fix $t\in[n,n+1)$. Since $M$ has exactly one jump on $(t,t+1]$, namely at
time $n+1$, we have
$$
\langle M\rangle_{t,t+1}
=
c_{n+1}^2\,\E[\|\varepsilon_{n+1}\|^2\mid\mathcal F_n].
$$
Also,
$
\alpha(t)^2-\alpha(t+1)^2
=
\alpha_n^2-\alpha_{n+1}^2
=
c_{n+1}^2.
$
By assumption \textup{(b)}, on $\Gamma$ there exists a finite random integer
$T_1$ such that for all $n\ge T_1$,
$$
\frac{a_0}{2}
\le
\E[\|\varepsilon_{n+1}\|^2\mid\mathcal F_n]
\le
2a_1.
$$
Hence, on $\Gamma_*$ and for all sufficiently large $t$,
$$
\frac{a_0}{2}\bigl(\alpha(t)^2-\alpha(t+1)^2\bigr)
\le
\langle M\rangle_{t,t+1}
\le
2a_1\bigl(\alpha(t)^2-\alpha(t+1)^2\bigr).
$$

Similarly,
$$
\langle M\rangle_{t,\infty}
=
\sum_{k\ge n+1}c_k^2\,\E[\|\varepsilon_k\|^2\mid\mathcal F_{k-1}].
$$
Using again assumption \textup{(b)}, on $\Gamma_*$ and for all sufficiently large $t$,
$$
\frac{a_0}{2}\alpha(t)^2
\le
\langle M\rangle_{t,\infty}
\le
2a_1\alpha(t)^2.
$$

For $t\in[n,n+1)$, we notice that
$
V(R,(t,\infty))
=
\sum_{k\ge n+1}c_k\|r_k\|.
$
By Cauchy--Schwarz,
$$
V(R,(t,\infty))
\le
\Big(\sum_{k\ge n+1}c_k^2\Big)^{1/2}
\Big(\sum_{k\ge n+1}\|r_k\|^2\Big)^{1/2}
=
\alpha(t)\Big(\sum_{k\ge n+1}\|r_k\|^2\Big)^{1/2}.
$$
Since $\sum_{n\ge1}\|r_n\|^2<\infty$ on $\Gamma$ by assumption \textup{(e)}, it follows that
$$
V(R,(t,\infty))=o(\alpha(t))
\quad\text{on }\Gamma_*.
$$

As all the conditions of Proposition \ref{thm:RT_specialized} hold, we infer that
$$
\mathbb P\bigl(\Gamma_*\cap\{X_t\to0\}\bigr)=0.
$$
Let $\omega\in\Gamma_*\cap\{G_n\to0\}$. Since $G_n(\omega)\to0$, we have
$
G_{n+1}(\omega)-G_n(\omega)\to0.
$
By assumption \textup{(c)}, on $\Omega_*$ we have
$
c_{n+1}\|\varepsilon_{n+1}\|\le b_n\to0.
$
Also, $\sum_n\|r_n\|^2<\infty$ implies $r_n\to0$, and since
$\sum_n c_n^2<\infty$ implies $c_n\to0$, we have
$$
c_{n+1}\|r_{n+1}\|
\le
\frac12 c_{n+1}^2+\frac12\|r_{n+1}\|^2
\longrightarrow0.
$$
Hence, from the recursion
$$
G_{n+1}-G_n=\upsilon_{n+1}F_n+c_{n+1}(\varepsilon_{n+1}+r_{n+1}),
$$
it follows that
$\upsilon_{n+1}F_n\to0$ on $\Gamma_*\cap\{G_n\to0\}.$
Recall that for $t\in[n,n+1)$,
$
X_t-G_n=(t-n)\upsilon_{n+1}F_n,
$
so
$$
\sup_{t\in[n,n+1)}\|X_t-G_n\|
\le
\upsilon_{n+1}\|F_n\|
\to0 \text{ on } \Gamma_*\cap\{G_n\to0\}. 
$$
Therefore
$
\Gamma_*\cap\{G_n\to0\}
\subset
\Gamma_*\cap\{X_t\to0\}.
$
Hence
$
\mathbb P\bigl(\Gamma_*\cap\{G_n\to0\}\bigr)=0.
$
Since $\mathbb P(\Omega_*)=1$, it follows that
$
\mathbb P\bigl(\Gamma\cap\{G_n\to0\}\bigr)=0.
$
This completes the proof.
\end{proof}

 \section{Multivariate martingale central limit theorems}\label{appendix:martingale}
 \begin{proposition}\label{thm.mtg}
     Let $(X_{n,i}, \mathcal{F}_{n,i})_{1\le i\le k_n, n\ge 1}$ be a martingale difference array in $\R^d$ satisfying the following condition:
\begin{itemize}
    \item For every $\eps > 0$,
$$
\sum_{i=1}^{k_n} \E\left[ X_{n,i} X_{n,i}^\top \mathbf{1}_{\{\norm{X_{n,i}} > \eps\}} \mid \mathcal{F}_{n,i-1} \right] \convp 0.
$$
\item There exists a non-random positive semi-definite matrix $\Sigma \in \R^{d \times d}$ such that:
$$
\sum_{i=1}^{k_n} \E[X_{n,i}X_{n,i}^\top \mid \mathcal{F}_{n,i-1}] \convp \Sigma.
$$
\end{itemize} 
Then $$\sum_{i=1}^{k_n} X_{n,i} \convd \mathcal{N}_d(0, \Sigma).$$
 \end{proposition}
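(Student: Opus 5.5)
The plan is to reduce Proposition~\ref{thm.mtg} to the one-dimensional martingale central limit theorem for martingale difference arrays, via the Cram\'er--Wold device.

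Fix $u\in\R^d$ and set $Y_{n,i}:=\langle u,X_{n,i}\rangle$. Then $(Y_{n,i},\mathcal F_{n,i})$ is a real martingale difference array. By Cram\'er--Wold, it suffices to show
$$
\sum_{i=1}^{k_n}Y_{n,i}\convd\mathcal N(0,u^\top\Sigma u)
$$
for every $u$. The case $u=0$ is trivial, so assume $u\neq0$.

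\emph{Step 1: conditional variance.} We have
$$
\sum_i\E[Y_{n,i}^2\mid\mathcal F_{n,i-1}]=u^\top\Big(\sum_i\E[X_{n,i}X_{n,i}^\top\mid\mathcal F_{n,i-1}]\Big)u .
$$
By the second hypothesis and continuity of the quadratic form, this converges in probability to $u^\top\Sigma u$.

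\emph{Step 2: conditional Lindeberg condition.} Since $|Y_{n,i}|\le\|u\|\,\|X_{n,i}\|$, we have $\{|Y_{n,i}|>\eps\}\subseteq\{\|X_{n,i}\|>\eps/\|u\|\}$. Hence
$$
\sum_i\E[Y_{n,i}^2\mathbf 1_{\{|Y_{n,i}|>\eps\}}\mid\mathcal F_{n,i-1}]
\le u^\top\Big(\sum_i\E[X_{n,i}X_{n,i}^\top\mathbf 1_{\{\|X_{n,i}\|>\eps/\|u\|\}}\mid\mathcal F_{n,i-1}]\Big)u .
$$
The right-hand side tends to $0$ in probability by the first hypothesis applied with $\eps/\|u\|$. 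This uses that the truncated conditional second-moment matrices are nonnegative definite.

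\emph{Step 3: apply the scalar theorem.} Invoke the classical scalar martingale CLT for arrays, e.g.\ Hall--Heyde, Corollary~3.1 (or Brown's theorem). Its hypotheses are exactly the conditional Lindeberg condition and convergence in probability of the conditional variance to a constant. The constant limit $u^\top\Sigma u$ means no nesting condition on the $\sigma$-fields is needed.

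The only point needing care is the degenerate case $u^\top\Sigma u=0$. There the limit is $\delta_0$, which should be read as $\mathcal N(0,0)$. This follows either from the same theorem or directly: $\E[(\sum_i Y_{n,i})^2\wedge 1]\to0$ after localizing the conditional variance by a stopping time at which it exceeds a small level. This localization, combined with Lenglart's inequality, is the main technical obstacle if one wants a self-contained argument.

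Since all characteristic functions $\E[e^{i\langle u,\sum_i X_{n,i}\rangle}]$ converge to $e^{-u^\top\Sigma u/2}$, the vector $\sum_i X_{n,i}$ converges in distribution to $\mathcal N_d(0,\Sigma)$.
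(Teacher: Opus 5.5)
Your proposal is correct and follows the paper's proof essentially step for step: the Cram\'er--Wold reduction to $\langle u,X_{n,i}\rangle$, the transfer of the conditional-variance and Lindeberg conditions via $|\langle u,X_{n,i}\rangle|\le\|u\|\,\|X_{n,i}\|$, and an appeal to Hall--Heyde, Corollary~3.1. Your extra remarks are correct refinements that the paper leaves implicit: a constant limit removes the nesting requirement, and the degenerate case $u^\top\Sigma u=0$ is handled as the limit $\mathcal N(0,0)=\delta_0$.
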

\begin{proof}
    For any fixed $v \in \R^d$, define $Y_{n,i} = v^\top X_{n,i}$.
    We have
     $$
\sum_{i=1}^{k_n} \E[Y_{n,i}^2 \mid \mathcal{F}_{n,i-1}]=v^\top\sum_{i=1}^{k_n} \E[X_{n,i}X_{n,i}^\top \mid \mathcal{F}_{n,i-1}]v \convp v^\top\Sigma v.
$$  
By Cauchy-Schwarz inequality $|Y_{n,i}|=|v^\top X_{n,i}|\le \|v\|\cdot \|X_{n,i}\|$. For each $\eps>0$, we thus have
$$
\sum_{i=1}^{k_n} \E\left[ Y_{n,i}^2 \mathbf{1}_{\{|{Y_{n,i}}| > \eps\}} \mid \mathcal{F}_{n,i-1} \right]\le 
v^\top\sum_{i=1}^{k_n} \E\left[ X_{n,i} X_{n,i}^\top  \mathbf{1}_{\{\|{X_{n,i}}\|\cdot \|v\| > \eps\}} \mid \mathcal{F}_{n,i-1} \right] v \convp 0.
$$
Applying the uni-variate martingale CLT to $(Y_{n,i})$, we infer that $\sum_{i=1}^{k_n} Y_{n,i} \convd \mathcal{N}(0, v^\top \Sigma v)$ (see e.g. Corollary 3.1 in \cite{HH1980}). By the Cramér-Wold theorem, we obtain the desired result. 
\end{proof}
\begin{corollary}\label{thm.mtg1}
    Let $(M_n)$ be an $\R^d$-valued martingale converging almost surely and in $L^2$ to $M_\infty$.
    Define $\Delta_n = M_n - M_{n-1}$ and 
$\Gamma_n := \sum_{k=n+1}^\infty \E[\Delta_k \Delta_k^\top]$.
Assume that $\Gamma_n$ is is positive definite for all sufficiently large and that for every $\eps > 0$,
\begin{align}
   &\Gamma_n^{-1/2} \sum_{k=n+1}^\infty \E\Bigl[ \Delta_k \Delta_k^\top  \,
    \mathbf{1}_{\Big\{\norm{\Gamma_n^{-1/2}\Delta_k} > \eps\Big\}} \,\Big|\, \mathcal{F}_{k-1} \Bigr]
   \Gamma_n^{-1/2} \convp 0,\\
& \Gamma_n^{-1/2}\sum_{k=n+1}^\infty \E\Bigl[\Delta_k \Delta_k^\top  \,\Big|\, \mathcal{F}_{k-1} \Bigr] \Gamma_n^{-1/2}
    \convp I_d.
\end{align}
Then $
\Gamma_n^{-1/2}(M_\infty - M_{n}) \convd \mathcal{N}_d(0, I_d).$
\end{corollary}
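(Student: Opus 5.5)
The plan is to reduce Corollary~\ref{thm.mtg1} to Proposition~\ref{thm.mtg} by a truncation and reversal of the tail sum. Fix $n$ large enough that $\Gamma_n$ is positive definite. Since $M_k\to M_\infty$ in $L^2$, we have $M_\infty-M_n=\sum_{k>n}\Delta_k$, with convergence in $L^2$. Because $\Gamma_n=\sum_{k>n}\E[\Delta_k\Delta_k^\top]$ is a convergent series, I would choose $k_n>n$ so large that
\[
\E\Bigl\|\Gamma_n^{-1/2}\sum_{k>k_n}\Delta_k\Bigr\|^2=\tr\Bigl(\Gamma_n^{-1/2}\Gamma_{k_n}\Gamma_n^{-1/2}\Bigr)\le \frac1n .
\]
This is possible since $\Gamma_{k_n}\to0$ as $k_n\to\infty$ with $n$ fixed. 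Then $\Gamma_n^{-1/2}(M_\infty-M_n)=\sum_{k=n+1}^{k_n}\Gamma_n^{-1/2}\Delta_k+o_{\P}(1)$, and by Slutsky it suffices to prove the CLT for the finite sum.

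Next I would set up the array $X_{n,i}:=\Gamma_n^{-1/2}\Delta_{n+i}$ with filtration $\mathcal F_{n,i}:=\mathcal F_{n+i}$ for $1\le i\le k_n-n$. This is a martingale difference array, so I would verify the two hypotheses of Proposition~\ref{thm.mtg} with limit $I_d$.

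For the Lindeberg condition, the truncated sum over $n<k\le k_n$ is dominated in the positive semidefinite order by the full tail sum from $n+1$ to $\infty$. The full tail sum tends to $0$ in probability by assumption, so the truncated sum does too.

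For the conditional variance, the difference between the full tail sum and the truncated one is the positive semidefinite matrix
\[
D_n:=\Gamma_n^{-1/2}\sum_{k>k_n}\E[\Delta_k\Delta_k^\top\mid\mathcal F_{k-1}]\Gamma_n^{-1/2}.
\]
Its expectation has trace at most $1/n$, so $D_n\convp0$ by Markov's inequality applied to $\tr D_n$. Therefore the truncated conditional variance converges in probability to $I_d$, using the second assumption.

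Proposition~\ref{thm.mtg} then gives $\sum_i X_{n,i}\convd\mathcal N_d(0,I_d)$, and the $o_{\P}(1)$ remainder finishes the argument. The only delicate point is the choice of $k_n$ depending on $n$, so that the remainder after truncation is negligible uniformly after normalization by $\Gamma_n^{-1/2}$. This is handled by the trace bound above, which uses only the $L^2$ identity for orthogonal martingale increments. No reversal of time is actually needed, because the filtration runs forward on $n<k\le k_n$.
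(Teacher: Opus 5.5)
Your proposal is correct and follows the same route as the paper. You truncate the tail sum at an index $k_n$ (the paper's $n+m_n$) chosen so that the normalized remainder is negligible, and then apply Proposition~\ref{thm.mtg} to the array $\Gamma_n^{-1/2}\Delta_{n+i}$ with $\mathcal F_{n,i}=\mathcal F_{n+i}$. Your write-up is more complete than the paper's sketch: it checks both hypotheses for the truncated array, via positive semidefinite domination and the bound $\E\tr D_n\le 1/n$. The paper only states its $L^1$ choice criterion $\|\Gamma_n^{-1/2}\|\,\E\|\sum_{k>n+m_n}\Delta_k\|\to0$.
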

 \begin{proof} Let $\mathcal{F}_{n,i} = \mathcal{F}_{n+i}$.
     For each $n$, we apply Proposition \ref{thm.mtg} to the triangular array defined by
$X_{n,k} = \Gamma_n^{-1/2}\Delta_{n+k}$ for $k = 1, 2, \dots, m_n$,
where $m_n \to \infty$ is chosen so that 
$$\|\Gamma_n^{-1/2}\| \cdot \mathbb{E}\Big\| \sum_{k=n+m_n+1}^\infty \Delta_k \Big\| \to 0.$$
 \end{proof}

 \begin{corollary} \label{thm.mtg2} Let $(M_n)$ be an $\R^d$-valued martingale such that
        $$\Lambda_n := \sum_{k=1}^n \E[\Delta_k \Delta_k^\top] = \E[M_n M_n^\top] \to \infty$$ and $\Lambda_n$ is eventually positive definite.
Assume that 
for every $\eps > 0$,
\begin{align}
    &\Lambda_n^{-1/2}\sum_{k=1}^n \E\Big[ \Delta_k \Delta_k^\top \,
        \mathbf{1}_{\Big\{\norm{\Lambda_n^{-1/2}\Delta_k} > \eps\Big\}} \,\Big|\, \mathcal{F}_{k-1} \Big]\Lambda_n^{-1/2}
        \convp 0\quad \text{and}\\
    & \Lambda_n^{-1/2} \sum_{k=1}^n \E\left[\Delta_k \Delta_k^\top \mid \mathcal{F}_{k-1}\right] \Lambda_n^{-1/2} \convp I_d.
\end{align}
Then,
$\Lambda_n^{-1/2} M_n \convd \mathcal{N}_d(0, I_d)$.
 \end{corollary}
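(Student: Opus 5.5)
Corollary~\ref{thm.mtg2} follows from Proposition~\ref{thm.mtg}, applied to the triangular array $X_{n,k}:=\Lambda_n^{-1/2}\Delta_k$, $1\le k\le n$, with filtration $\mathcal F_{n,k}:=\mathcal F_k$. The array is a martingale difference array, because $\Lambda_n^{-1/2}$ is deterministic and $\E[\Delta_k\mid\mathcal F_{k-1}]=0$. We use only indices $n$ large enough that $\Lambda_n$ is positive definite. For smaller $n$ we may define the array arbitrarily, for instance as zero, since convergence in distribution concerns only the tail of the sequence. Then $\sum_{k=1}^n X_{n,k}=\Lambda_n^{-1/2}(M_n-M_0)$. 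If $M_0\neq0$, we absorb it by noting that $\Lambda_n^{-1/2}M_0\to0$ almost surely, since $\Lambda_n\to\infty$ forces $\|\Lambda_n^{-1/2}\|\to0$. This step requires reading ``$\Lambda_n\to\infty$'' as $\lambda_{\min}(\Lambda_n)\to\infty$, which is how it is used in the paper, where $\Lambda_n$ grows in every direction.

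Next we check the two hypotheses of Proposition~\ref{thm.mtg}. The conditional variance condition is immediate:
$$
\sum_{k=1}^n\E[X_{n,k}X_{n,k}^\top\mid\mathcal F_{k-1}]=\Lambda_n^{-1/2}\sum_{k=1}^n\E[\Delta_k\Delta_k^\top\mid\mathcal F_{k-1}]\Lambda_n^{-1/2}\convp I_d
$$
by the second assumed condition. Hence the limit matrix is $\Sigma=I_d$, which is deterministic and positive semidefinite. The Lindeberg condition is literally the first assumed condition, because
$$
X_{n,k}X_{n,k}^\top\mathbf 1_{\{\|X_{n,k}\|>\eps\}}=\Lambda_n^{-1/2}\Delta_k\Delta_k^\top\mathbf 1_{\{\|\Lambda_n^{-1/2}\Delta_k\|>\eps\}}\Lambda_n^{-1/2},
$$
and the deterministic factors $\Lambda_n^{-1/2}$ pass through the conditional expectation.

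Proposition~\ref{thm.mtg} then gives $\Lambda_n^{-1/2}M_n\convd\mathcal N_d(0,I_d)$. The only delicate point is the bookkeeping around $M_0$ and the eventual positive definiteness, and neither presents a real obstacle. The symmetric square root $\Lambda_n^{-1/2}$ is well defined once $\Lambda_n$ is positive definite.
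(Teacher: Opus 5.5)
Your proposal is correct and is exactly the paper's argument. The paper applies Proposition~\ref{thm.mtg} to the array $X_{n,k}=\Lambda_n^{-1/2}\Delta_k$, $1\le k\le n$, with $\mathcal F_{n,k}=\mathcal F_k$, and the two hypotheses of the corollary are verbatim the two hypotheses of the proposition. Your bookkeeping about $M_0$ is harmless but unnecessary: the stated identity $\sum_{k=1}^n\E[\Delta_k\Delta_k^\top]=\E[M_nM_n^\top]$ already forces $\E[M_0M_0^\top]=0$, i.e.\ $M_0=0$ a.s., so you never need to read $\Lambda_n\to\infty$ as $\lambda_{\min}(\Lambda_n)\to\infty$.
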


 \begin{proof} We apply Proposition \ref{thm.mtg} to
     $X_{n,k} = \Lambda_n^{-1/2} \Delta_k$ and $\mathcal{F}_{n,k}=\mathcal{F}_k, \quad k = 1, \dots, n.
$
 \end{proof} 

 \section{Robbins-Siegmund theorem} \label{appendix:RSthm}
 \begin{proposition}[Robbins-Siegmund 1971, \cite{RS1971}]\label{RS.thm}
     Let $(X_n)_{n\ge0}, (Y_n)_{n\ge0},  (\alpha_n)_{n\ge0}, (\beta_n)_{n\ge0}$ be sequences of nonnegative random variables adapted to $(\mathcal{F}_n)$ such that
     $$\mathbb{E}[X_{n+1} \mid \mathcal{F}_n] \le (1 + \alpha_n) X_n + \beta_n - Y_n, \quad \text{a.s.}$$
Then,  on the event $\Big\{\sum_{n=0}^{\infty} \alpha_n < \infty \quad \text{and} \quad \sum_{n=0}^{\infty} \beta_n < \infty\Big\}$, we have:
 \begin{itemize}
     \item  $X_{n}$ converges a.s. to a finite random variable $X_{\infty}$;
     \item  $\sum_{n=0}^{\infty} Y_n < \infty$ a.s.
 \end{itemize}
 \end{proposition}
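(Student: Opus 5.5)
This is the Robbins--Siegmund theorem. The plan is to reduce it to the almost sure convergence of a nonnegative supermartingale by discounting with the products $\prod(1+\alpha_k)$ and compensating for the $\beta_k$. Define
$$
\pi_n:=\prod_{k=0}^{n-1}(1+\alpha_k)\quad(\pi_0=1),\qquad X_n':=\frac{X_n}{\pi_n},\quad \beta_n':=\frac{\beta_n}{\pi_{n+1}},\quad Y_n':=\frac{Y_n}{\pi_{n+1}} .
$$
Each $\pi_{n+1}$ is $\mathcal F_n$-measurable, so dividing the hypothesis by $\pi_{n+1}$ gives
$$
\E[X'_{n+1}\mid\mathcal F_n]\le X_n'+\beta_n'-Y_n'.
$$
Now set
$$
W_n:=X_n'-\sum_{k<n}(\beta_k'-Y_k').
$$
This is a supermartingale, but it need not be integrable and it is not bounded below. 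Both problems are removed by localization.

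For $a>0$, let
$$
\tau_a:=\inf\Big\{n:\sum_{k\le n}\beta_k'>a\Big\}.
$$
This is a stopping time because $\sum_{k\le n}\beta_k'$ is $\mathcal F_n$-measurable. On $\{k<\tau_a\}$ we have $\sum_{j\le k}\beta_j'\le a$. Hence the stopped process $W_{n\wedge\tau_a}+a$ is a supermartingale bounded below by $0$, since $X'\ge0$ and $Y'\ge0$. The conditional expectation inequality survives stopping because $\{\tau_a>n\}\in\mathcal F_n$. To handle possible nonintegrability, I would also truncate with the indicator of $\{X_0\le b\}\in\mathcal F_0$, or use generalized conditional expectations for nonnegative variables. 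A nonnegative supermartingale converges almost surely to a finite limit, so $W_{n\wedge\tau_a}$ converges a.s. for every $a$ and $b$.

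On the event $\{\sum\beta_n<\infty\}$ we have $\sum\beta_n'\le\sum\beta_n<\infty$ because $\pi\ge1$. Hence this event is, up to null sets, the union over $a\in\N$ of $\{\tau_a=\infty\}$. On $\{\tau_a=\infty\}$ it follows that $W_n$ converges, and therefore $X_n'+\sum_{k<n}Y_k'$ converges. Both terms are nonnegative and the second is nondecreasing, so $\sum Y_k'<\infty$ and $X_n'$ converges to a finite limit. Finally, on $\{\sum\alpha_n<\infty\}$ the product $\pi_n$ converges to a finite $\pi_\infty\ge1$. Therefore $X_n=\pi_nX_n'$ converges to a finite limit, and $\sum Y_n\le\pi_\infty\sum Y_n'<\infty$.

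The main obstacle is the measure-theoretic bookkeeping: doing the localization correctly so that nonintegrability of $X_n$ and the randomness of $\alpha_n$ and $\beta_n$ cause no trouble. I would handle this by first working on $\{X_0\le b\}$ with $\tau_a$ stopping. Then $\E[X_{n\wedge\tau_a}']\le b+a$ by induction, which gives integrability. Letting $a,b\to\infty$ through the integers completes the argument.
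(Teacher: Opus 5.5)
The paper gives no proof of this proposition; it simply quotes Robbins and Siegmund \cite{RS1971}. Your argument is correct and is essentially the standard proof from that reference. You discount by $\prod_{k<n}(1+\alpha_k)$, compensate to obtain the supermartingale $W_n$, and localize with $\tau_a$ and $\{X_0\le b\}$ so that $(W_{n\wedge\tau_a}+a)\mathbf 1_{\{X_0\le b\}}$ is an integrable nonnegative supermartingale. Your bookkeeping with generalized conditional expectations for nonnegative variables is sound, as is the transfer back through $\pi_n\to\pi_\infty<\infty$.
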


\section{Several results for stochastic differential equations}\label{appendix:sde}

Let $(\Omega,\mathcal{F},\P)$ be a complete probability space with a complete right-continuous filtration $(\mathcal{F}_t)_{t\ge 0}$, and let 
	$(B_t)_{t\ge 0}$ be a standard Brownian motion on $\mathbb{R}^{d}$. Throughout this section, we consider a time-homogeneous SDE in $\mathbb{R}^d$:
	\begin{equation}\label{eq:sde_appendix}
		\mathrm{d}X_t = b(X_t)\,\mathrm{d}t 
		+ \sigma(X_t)\,\mathrm{d}B_t,
	\end{equation}
	with generator
	$$
	(\mathcal{L}\varphi)(x)
	= \langle b(x),\,\nabla\varphi(x)\rangle
	+\frac{1}{2}\mathrm{tr}\!\left(
	\sigma(x)\sigma(x)^\top\nabla^2\varphi(x)\right) \quad \text{for }\varphi\in C^2_c(\R^d).$$
    
\subsection{Non-explosion and invariant measures}

	The following results are taken from 
	\cite{MR1234295}.
		A function $V:\mathbb{R}^d\to\mathbb{R}_+$ is called 
		\emph{norm-like} if $V(x)\to+\infty$ as $\|x\|\to\infty$, 
		i.e., the level sets $\{x: V(x)\le h\}$ are precompact 
		for every $h>0$.
	\medskip
	\begin{theorem}[\cite{MR1234295}, Theorem~2.1: 
		Non-explosion criterion]\label{thm.Ber_nonexp}
		Assume that there exist a norm-like function $V$ and 
		constants $c,d>0$ such that
		$$
		(\mathcal{L}V)(x) \le cV(x)+d
		\quad\forall\,x\in\mathbb{R}^d.
		$$
		Then the SDE \eqref{eq:sde_appendix} admits 
			global in time solutions for any starting point 
			$x\in\mathbb{R}^d$.
	\end{theorem}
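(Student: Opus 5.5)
The plan is to follow the classical Lyapunov--Khasminskii argument. Fix $x\in\R^d$ and, for each $h>0$, set $\tau_h:=\inf\{t\ge0:\ V(X_t)\ge h\}$. Since $V$ is norm-like, $\{V<h\}$ is bounded. On such a bounded set the coefficients $b$ and $\sigma$ are locally well behaved, so a solution of \eqref{eq:sde_appendix} exists up to $\tau_h$. In the weak sense, we take a solution up to the exit time of bounded sets, which is the standing well-posedness framework for this appendix. Letting $h\to\infty$ patches these into a maximal solution with explosion time $\zeta:=\lim_{h\to\infty}\tau_h$. Global existence then amounts to proving $\P(\zeta<\infty)=0$.

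The key step is an exponential supermartingale. Assume $V\in C^2$, as is implicit in the statement since $\mathcal{L}V$ appears. Define $W(t,x):=e^{-ct}\bigl(V(x)+d/c\bigr)$. By It\^o's formula applied up to $t\wedge\tau_h$, the process $W(t\wedge\tau_h,X_{t\wedge\tau_h})$ equals
\[
W(0,x)+\int_0^{t\wedge\tau_h}e^{-cs}\bigl(\mathcal{L}V(X_s)-cV(X_s)-d\bigr)\rmd s+\text{(local martingale)}.
\]
The stochastic integral is a true martingale, because its integrand is bounded on $\{V<h\}$ and the process is stopped at $\tau_h$. The drift integrand is nonpositive by the hypothesis $\mathcal{L}V\le cV+d$. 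Taking expectations gives
\[
\E\bigl[e^{-c(t\wedge\tau_h)}\bigl(V(X_{t\wedge\tau_h})+d/c\bigr)\bigr]\le V(x)+d/c .
\]

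Next, on the event $\{\tau_h\le t\}$ we have $V(X_{\tau_h})\ge h$, by path continuity and the definition of $\tau_h$. Hence
\[
e^{-ct}h\,\P(\tau_h\le t)\le V(x)+d/c,
\qquad\text{that is,}\qquad
\P(\tau_h\le t)\le e^{ct}\bigl(V(x)+d/c\bigr)/h .
\]
Letting $h\to\infty$ gives $\P(\zeta\le t)=0$ for every $t$. Hence $\zeta=\infty$ a.s., and the solution is global for any starting point.

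I expect the main obstacle to be the localisation rather than the estimate itself. One must make precise that a local solution exists up to each $\tau_h$, and that these local solutions are consistent so that $\zeta$ is well defined. I would handle this by truncating the coefficients outside $\{V<2h\}$ to globally well-behaved ones, solving the truncated SDE, and using consistency of the solutions on $[0,\tau_h]$. For weak solutions, the consistency is in law.
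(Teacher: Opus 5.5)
Your argument is correct and is essentially the proof behind the cited result. The paper gives no proof of its own and defers to Meyn--Tweedie, whose argument is exactly this: the stopped process $e^{-c(t\wedge\tau_h)}\bigl(V(X_{t\wedge\tau_h})+d/c\bigr)$ is a nonnegative supermartingale, which gives $\P(\tau_h\le t)\le e^{ct}\bigl(V(x)+d/c\bigr)/h$, and one lets $h\to\infty$; existence of a solution up to the explosion time is taken as part of the framework, as you do. One small remark: you do not need the stochastic integral to be a true martingale (which would require $\sigma$ locally bounded), since a nonnegative local supermartingale is already a supermartingale by Fatou's lemma.
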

	
  \begin{theorem}[\cite{MR1234295}, Theorem~4.5]\label{thm.Ber_Harris}
Assume that the SDE \eqref{eq:sde_appendix} has the strong Feller property and
is irreducible, that is, $P_t(x,U)>0$ for all $x\in\mathbb R^d$, all nonempty
open sets $U\subset\mathbb R^d$, and all $t>0$. Assume moreover that there
exist a norm-like function $V:\mathbb R^d\to\mathbb R_+$, constants
$c_0,d_0>0$, a function $f:\mathbb R^d\to[1,\infty)$, and a compact set
$C\subset\mathbb R^d$ such that
\begin{equation}\label{eq:Foster_appendix}
(\mathcal L V)(x)
\le
-c_0 f(x)+d_0\mathbf 1_C(x),
\quad x\in\mathbb R^d.
\end{equation}
Then $(P_t)_{t\ge0}$ admits a unique invariant probability measure.
\end{theorem}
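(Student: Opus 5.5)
This statement is a result quoted from \cite{MR1234295} (a Harris-type theorem for diffusions), so what follows is a sketch of the standard argument rather than a new proof. The plan is to build an invariant measure from time averages and then prove uniqueness from strong Feller plus irreducibility.

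\emph{Existence.} Fix $x$ and let $\tau_R$ be the exit time from the ball of radius $R$. Apply Dynkin's formula to $V(X_{t\wedge\tau_R})$ and use \eqref{eq:Foster_appendix}:
$$
\E_x V(X_{t\wedge\tau_R})+c_0\,\E_x\int_0^{t\wedge\tau_R} f(X_s)\rmd s\le V(x)+d_0t.
$$
Since $V$ is norm-like and $V\ge0$, the bound $\E_xV(X_{t\wedge\tau_R})\le V(x)+d_0t$ gives non-explosion, so $\tau_R\to\infty$. Monotone convergence then yields
$$
\frac1t\int_0^t P_sf(x)\rmd s\le \frac{V(x)}{c_0t}+\frac{d_0}{c_0}.
$$
Define the averaged measures $\mu_t:=\frac1t\int_0^t P_s(x,\cdot)\rmd s$.
\begin{itemize}
\item The averages satisfy $\mu_t(f)\le C$ for $t\ge1$.
\item To get tightness, I would invoke the standard convention in \cite{MR1234295} that $f$ is norm-like, which gives $\mu_t(\{f>h\})\le C/h$ by Markov's inequality.
\item If $f$ is only assumed $\ge1$, I would instead get tightness from $V$: apply the drift bound to $\log(1+V)$, or use compact sublevel sets of $V$ together with $(\mathcal{L}V)\le -c_0$ outside $C$, which forces positive recurrence of $C$.
\end{itemize}
Take a weak limit point $\pi$ of $\mu_{t_k}$. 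The strong Feller property (in particular, the Feller property) gives $\mu_{t_k}P_s\to\pi P_s$. Also $\|\mu_tP_s-\mu_t\|_{TV}\le 2s/t\to0$. Hence $\pi P_s=\pi$, so $\pi$ is invariant.

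\emph{Uniqueness.}
\begin{itemize}
\item Suppose there are two distinct invariant measures. Then there are two mutually singular ergodic invariant measures $\pi_1\perp\pi_2$.
\item Strong Feller and irreducibility make $P_t(x,\cdot)$ and $P_t(y,\cdot)$ mutually absolutely continuous for all $x,y$; this is Doob--Khasminskii. The route is: $y\mapsto P_t(y,B)$ is continuous; if $P_t(x,B)=0$ for some $x$, the Chapman--Kolmogorov identity together with irreducibility forces $P_{t}(y,B)=0$ for $y$ in an open set and then everywhere.
\item Then $\pi_i=\pi_iP_t$ are both equivalent to $P_t(x,\cdot)$, which contradicts singularity.
\end{itemize}

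The main obstacle is the tightness step when $f$ is merely $\ge1$. I would handle it by reading $V$ and $f$ as in \cite{MR1234295}: either $f\ge cV$ near infinity, or one uses the recurrence of the compact set $C$ coming from the drift condition to control excursions.
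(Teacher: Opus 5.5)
Your uniqueness half is the standard Doob--Khasminskii argument and is fine. For precision, run it with $P_{s+t}=P_tP_s$: if $P_s(y_0,B)>0$ for some $y_0$, then continuity of $y\mapsto P_s(y,B)$ and irreducibility at time $t$ give $P_{s+t}(x,B)>0$ for every $x$. Your existence half is also fine when $f$ is norm-like. That covers the paper's only use of the result, where $f=1+V$ in the proof of Proposition~\ref{thm.ergodicity}.

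The statement, however, allows an arbitrary $f\ge1$, and there your existence step has a genuine gap that you flag but do not close. Norm-likeness of $f$ is not a convention of \cite{MR1234295}. The Meyn--Tweedie drift condition takes any $f\ge1$, because their proof goes through Harris recurrence rather than tightness: strong Feller plus irreducibility make compact sets petite, and the drift gives finite mean return times to $C$. The paper itself only cites this proof. Neither of your fallbacks works as stated:
\begin{itemize}
\item Since $\mathcal L\log(1+V)\le \mathcal LV/(1+V)\le(-c_0f+d_0\mathbf 1_C)/(1+V)$, passing to the logarithm only weakens the drift.
\item The bound $\E_x\tau_C\le V(x)/c_0$ does not by itself give tightness of $(\mu_t)$. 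The supermartingale estimate for the time an excursion started at $y$ spends outside the ball of radius $R$ is $V(y)/c_0$, which does not decay in $R$. Making this route work needs exactly the regeneration/petite-set machinery you are trying to avoid.
\end{itemize}

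The missing idea is that tightness is unnecessary. Keep the $\mathbf 1_C$ term in your Dynkin computation. Using $f\ge1$, $V\ge0$ and letting $R\to\infty$, you get $c_0t\le V(x)+d_0\int_0^tP_s(x,C)\rmd s$, that is, $\mu_t(C)\ge c_0/d_0-V(x)/(d_0t)$. Then:
\begin{itemize}
\item Let $\mu$ be a vague limit point of $(\mu_{t_k})$ with $t_k\to\infty$. It is a sub-probability measure, and since $C$ is compact, $\mu(C)\ge\limsup_k\mu_{t_k}(C)\ge c_0/d_0>0$.
\item For $0\le g\in C_c(\R^d)$, $P_sg$ is bounded, continuous and nonnegative, so $\liminf_k\mu_{t_k}(P_sg)\ge\mu(P_sg)$.
\item Combined with $|\mu_t(P_sg)-\mu_t(g)|\le 2s\|g\|_\infty/t$, this gives $\mu P_s\le\mu$.
\item Since $P_s$ is conservative, $\mu P_s$ and $\mu$ have the same total mass. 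Hence $\mu P_s=\mu$, and $\mu/\mu(\R^d)$ is an invariant probability.
\end{itemize}
With this replacement, your Krylov--Bogoliubov plus Doob--Khasminskii route proves the statement in full generality. It is more elementary than the Meyn--Tweedie argument, which in exchange also yields positive Harris recurrence.
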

    
    \begin{remark}[\cite{Ber2021}, Remark~2.0.3: Strong Feller property] \label{rem.Ber_feller}	The semigroup $(P_t)_{t\ge 0}$ of \eqref{eq:sde_appendix} has the strong Feller property if the ellipticity condition holds:		\begin{equation}\label{eq:Feller_property}	                                       \langle\xi,\,\sigma(x)\sigma(x)^\top\xi\rangle 	\ge c\|\xi\|^2		\quad\forall\,\xi\in\mathbb{R}^d,		\end{equation}		for some constant $c>0$.	\end{remark}   
\subsection{Gaussian bounds for the transition density}\label{sec:gaussian_bounds}
Consider the
$d$-dimensional SDE
\begin{equation}\label{eq:sde_MPZ}
	\mathrm{d}X_t = b(X_t)\,\mathrm{d}t + \sigma(X_t)\,\mathrm{d}B_t,
	\quad t\ge0,\quad X_0=x.
\end{equation}




For $\varepsilon\in(0,1]$, let $\rho$ be a nonnegative smooth function
supported in the unit ball of $\mathbb{R}^d$ with $\int_{\mathbb{R}^d}\rho(x)\,\mathrm{d}x=1$,
and define the spatial mollification of $b$ at scale $\varepsilon$ by
\begin{equation}\label{eq:mollification}
	b^{(\varepsilon)}(x)
	:= \bigl(b\ast\rho_\varepsilon\bigr)(x)
	= \int_{\mathbb{R}^d} b(y)\,\rho_\varepsilon(x-y)\,\mathrm{d}y \quad \text{with}\quad \rho_\varepsilon(x) := \varepsilon^{-d}\rho(\varepsilon^{-1}x)
\end{equation}
Since $b^{(\varepsilon)}$ is smooth in $x$, the
ordinary differential equation
\begin{equation}\label{eq:flow}
	\dot\theta^{(\varepsilon)}_{t}(x)
= b^{(\varepsilon)}\bigl(\theta^{(\varepsilon)}_t(x)\bigr),
	\quad
	\theta^{(\varepsilon)}_{0}(x) = x,
\end{equation}
admits a unique global solution, and the map
$x\mapsto\theta^{(\varepsilon)}_{t}(x)$ is a $C^\infty$-diffeomorphism
for each $t$.


We will use the following result in the
proof of Lemma~\ref{sde.unique_solution}

\begin{theorem}[\cite{MPZ2021}, Theorem~1.2(i)]\label{thm:MPZ}
    Assume that the following two conditions hold.
    \begin{itemize}
        \item Condition $(H^{\alpha}_\sigma)$:
        There exist constants $\kappa_0\ge1$ and $\alpha\in(0,1)$ such that
        \begin{align*}
       & \kappa_0^{-1}|\xi|^2
        \le \langle\sigma(x)\sigma(x)^\top\xi,\xi\rangle
        \le \kappa_0|\xi|^2\quad\text{and}\quad |\sigma(x)-\sigma(y)|\le\kappa_0|x-y|^{\alpha}
        \quad\forall\,x,y,\xi \in\mathbb{R}^d.
        \end{align*}

        \item Condition $(H^{0}_b)$:
        There exists a constant $\kappa_1>0$ such that
        $
        |b(0)|\le\kappa_1,
        $
        and
        $$
        |b(x)-b(y)|
        \le
        \kappa_1\bigl(1\vee |x-y|\bigr)
        \quad\forall\,x,y\in\mathbb{R}^d.
        $$
    \end{itemize}
    Then the SDE \eqref{eq:sde_MPZ} admits a unique weak solution. Moreover, for any
    $T>0$, there exist constants $C_0\ge1$ and
    $\lambda_0\in(0,1]$, depending only on
    $T,\alpha,\beta,\kappa_0,\kappa_1$ and $d$, such that for all
    $0<t\le T$ and $x,y\in\mathbb{R}^d$, the transition density
    $p_t(x,y)$ exists and satisfies the two-sided Gaussian bound
    \begin{equation}\label{eq:two_sided_MPZ}
        C_0^{-1}\,
        g_{\lambda_0^{-1}}\!\bigl(t,\theta^{(1)}_{t}(x)-y\bigr)
        \le
        p_t(x,y)
        \le
        C_0\,
        g_{\lambda_0}\!\bigl(t,\theta^{(1)}_{t}(x)-y\bigr),
    \end{equation}
    where
    $
    g_\lambda(t,z):=t^{-d/2}\exp(-\lambda|z|^2/t),
    $
    and $\theta^{(1)}_t(x)$ is the mollified flow defined in
    \eqref{eq:flow} with $\varepsilon=1$.
\end{theorem}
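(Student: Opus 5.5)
The statement above is Theorem~1.2(i) of \cite{MPZ2021}, quoted for use in the proof of Lemma~\ref{sde.unique_solution}. If I had to reprove it, I would follow the parametrix-around-a-flow strategy. I would first regularize, and then transfer bounds that are uniform in the regularization to the limit.

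\textbf{Step 1: reduction to a bounded perturbation of a Lipschitz drift.} Condition $(H^0_b)$ gives
$$\|b-b^{(1)}\|_\infty\le\kappa_1 \quad\text{and}\quad \|\nabla b^{(1)}\|_\infty\le C\kappa_1,$$
because the increments of $b$ over distances at most $1$ are bounded by $\kappa_1$. So $b=b^{(1)}+\bar b$, where $b^{(1)}$ is globally Lipschitz and $\bar b$ is bounded. I would also mollify $b$ and $\sigma$ at scale $\varepsilon$. For the smooth SDE with coefficients $(b^{(\varepsilon)},\sigma^{(\varepsilon)})$, the density $p^\varepsilon_t(x,y)$ exists classically. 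A Gr\"onwall argument gives the flow comparison
$$\sup_{x,\,t\le T}\|\theta^{(\varepsilon)}_t(x)-\theta^{(1)}_t(x)\|\le C_T t.$$
This shift is absorbed into the Gaussian kernels for $t\le T$, since $\|z+w\|^2/t\ge \|z\|^2/(2t)-\|w\|^2/t$ and $\|w\|^2/t\le C_T^2 t$. So it suffices to prove the bounds with $\theta^{(\varepsilon)}$, with constants uniform in $\varepsilon$.

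\textbf{Step 2: parametrix expansion and the upper bound.} I would take as frozen proxy the Gaussian density centred at the flow,
$$\tilde p_t(x,y)=g_{\Sigma_t(y)}\bigl(t,\theta^{(\varepsilon)}_t(x)-y\bigr),$$
with covariance built from $\sigma\sigma^\top$ along the backward flow from $y$. Write $p=\sum_{k\ge0}\tilde p\otimes H^{\otimes k}$, where $H=(L-\tilde L)\tilde p$. The kernel $H$ has two parts.
\begin{itemize}
\item The diffusion part is controlled by the $\alpha$-H\"older continuity of $\sigma$, giving a singularity $t^{-1+\alpha/2}$.
\item The drift part involves the mismatch between $b^{(\varepsilon)}(x)$ and the drift linearized along the flow. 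Using Lipschitzness of $b^{(1)}$ and boundedness of $\bar b$, this gives a singularity $t^{-1/2}$.
\end{itemize}
Both singularities are integrable in time. The key technical lemma is that Gaussians centred along the flow satisfy a Chapman--Kolmogorov-type convolution inequality. This holds because the flow is bi-Lipschitz uniformly for $t\le T$. With it, the series converges and yields the upper bound in \eqref{eq:two_sided_MPZ}, uniformly in $\varepsilon$.

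\textbf{Step 3: the lower bound.} A near-diagonal lower bound, $p_t(x,y)\ge c\,t^{-d/2}$ when $\|\theta_t(x)-y\|\le C\sqrt t$, follows from the first parametrix term dominating the remainder for small $t$. I would extend it off-diagonal by a chaining argument. Split $[0,t]$ into $N\approx \|\theta_t(x)-y\|^2/t$ pieces, follow a chain of balls of radius $\sqrt{t/N}$ along a path adapted to the flow, and apply the near-diagonal bound plus Chapman--Kolmogorov on each piece. This produces the factor $\exp(-\lambda_0^{-1}\|\theta_t(x)-y\|^2/t)$.

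\textbf{Step 4: existence and uniqueness of the weak solution.} Existence follows because linear growth makes the laws of $X^\varepsilon$ tight, and any limit solves the martingale problem for $\mathcal L$. For uniqueness, I would show that every solution's one-dimensional marginals satisfy the same Duhamel/parametrix identity. The Gaussian bounds make the associated series contractive, so the marginals are uniquely determined; well-posedness of the martingale problem then follows by the standard Ethier--Kurtz argument. The transition density of the unique solution inherits the bounds by weak convergence of $p^\varepsilon$.

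I expect the main obstacle to be Step 2 with an unbounded, merely locally bounded drift. The control of $H$ depends on centring at $\theta_t$ rather than at $x$, and on the flow being bi-Lipschitz so that convolutions of flow-centred Gaussians stay flow-centred Gaussians. I would try that first. The chaining in Step 3 is the second most delicate point.
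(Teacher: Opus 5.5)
The paper gives no proof of this statement. It is quoted from \cite{MPZ2021} and used as a black box in Lemma~\ref{sde.unique_solution} and Proposition~\ref{thm.ergodicity}. Your outline (regularize, run a parametrix around a flow-transported Gaussian proxy with constants uniform in the regularization, get the lower bound by chaining, pass to the limit) follows the strategy of the cited work in broad strokes. As written, however, Step~2 does not go through.

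\textbf{The flow $\theta^{(\varepsilon)}$ is not uniformly bi-Lipschitz.} You transport the proxy by $\theta^{(\varepsilon)}$ and rely on this flow being bi-Lipschitz for $t\le T$ with constants independent of $\varepsilon$. Under $(H^0_b)$ this fails. Take the drift $-\frac12x+\|x\|^{\alpha-1}x$ to which the paper applies the theorem. Near the origin $\|\nabla b^{(\varepsilon)}\|_\infty\sim\varepsilon^{\alpha-1}$, so two nearby points there separate at rate $e^{ct\varepsilon^{\alpha-1}}$. Uniformly in $\varepsilon$ one only has $\|\theta^{(\varepsilon)}_t(x)-\theta^{(\varepsilon)}_t(x')\|\le e^{\kappa_1t}(\|x-x'\|+\kappa_1t)$.

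\textbf{Your description of $H$ does not match your proxy.} Since $\partial_t\theta^{(\varepsilon)}_t(x)=D\theta^{(\varepsilon)}_t(x)\,b^{(\varepsilon)}(x)$, the transport in $g_{\Sigma_t(y)}(t,\theta^{(\varepsilon)}_t(x)-y)$ cancels the drift part of the regularized generator exactly. What remains contains $\frac12\sum_k\partial_kg\,\tr\bigl(a^{(\varepsilon)}(x)\nabla^2\theta^{(\varepsilon),k}_t(x)\bigr)$, where $a^{(\varepsilon)}:=\sigma^{(\varepsilon)}\sigma^{(\varepsilon)\top}$. This term is of size $t^{-1/2}\|\nabla^2\theta^{(\varepsilon)}_t\|_\infty$ times a Gaussian, and $\nabla^2\theta^{(\varepsilon)}_t$ is not bounded uniformly in $\varepsilon$.

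\textbf{The repair is already in your Step~1.} Transport the proxy by the fixed flow $\theta^{(1)}$ for every $\varepsilon$. This flow is genuinely bi-Lipschitz on $[0,T]$, with constant $e^{LT}$ where $L$ is the Lipschitz constant of $b^{(1)}$, and it has bounded second derivatives. The drift error becomes $\nabla g\cdot D\theta^{(1)}_t(x)\bigl(b^{(1)}-b^{(\varepsilon)}\bigr)(x)$ with $\|b^{(1)}-b^{(\varepsilon)}\|_\infty\le2\kappa_1$. That gives exactly the $t^{-1/2}$ singularity you want, and the bounds come out directly in terms of $\theta^{(1)}$, as in the statement.

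Two further points need care.

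First, the convolution lemma must not lose a fixed factor in the Gaussian exponent at each of the $k$ convolutions of the $k$-th parametrix term. Otherwise the exponent degenerates geometrically in $k$. With $\theta^{(1)}$ you avoid this by the change of variables $z_i\mapsto\theta^{(1)}_{t-u_i}(z_i)$ in the whole $k$-fold integral. This turns every kernel into an ordinary heat kernel with the common exponent $\lambda e^{-2LT}$, at the cost of a factor $C^k$, which is harmless against the $\Gamma$-function decay from the time integrals.

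Second, in Step~4 $b$ is only measurable. Identifying the limit of $\int_0^t b^{(\varepsilon)}(X^\varepsilon_s)\cdot\nabla\varphi(X^\varepsilon_s)\rmd s$ therefore needs Krylov-type bounds derived from the $\varepsilon$-uniform upper estimate, e.g.\ $\int_0^t\!\int|h(y)|\,p^\varepsilon_s(x,y)\rmd y\rmd s\le C\|h\|_{L^q}$ for $q>d/2$. Weak uniqueness is obtained more cheaply than through the Duhamel equation. Write $b=b^{(1)}+\bar b$, use localized Stroock--Varadhan well-posedness for the continuous linear-growth drift $b^{(1)}$ with continuous elliptic $\sigma$, and add the bounded part $\bar b$ by Girsanov.
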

\subsection{Exponential ergodicity}
\label{sec:Ber2021}

Let $(X_t)_{t\ge0}$ be a time-homogeneous Markov process on
$\mathbb{R}^d$ with transition kernel $\mathscr{P}(x, \cdot)$.
For a measurable function $f:\mathbb{R}^d\to\mathbb{R}$ and a
function $V:\mathbb{R}^d\to[0,\infty)$, define the weighted norm
\begin{equation}\label{eq:V_norm}
    \|f\|_{1+V} := \sup_{x\in\mathbb{R}^d}
\frac{|f(x)|}{1+V(x)}.
\end{equation}

\begin{theorem}[\cite{MR2857021}, Theorem~1.2:
	Exponential ergodicity in discrete time]\label{thm:Ber2021}
	Assume that the following conditions hold:
    \begin{itemize}
        \item[a.] 
        Geometric drift condition:     
 There exist a function $V:\mathbb{R}^d\to[0,\infty)$ and
	constants $d\ge0$ and $\gamma\in(0,1)$ such that
	\begin{equation}\label{eq:FL_Ber}
		(\mathscr{P}V)(x) \le \gamma V(x) + d
		\quad\forall\,x\in\mathbb{R}^d.
	\end{equation}
    \item[b.] 
    Minorisation condition: Let $C=\{x\in\mathbb{R}^d: V(x)\le R\}$ for some
	$R>2d(1-\gamma)^{-1}$.
	Then there exist $\alpha\in(0,1)$ and a probability measure
	$\nu$ such that
	\begin{equation}\label{eq:minor_Ber}
		\inf_{x\in C}\mathscr{P}(x,A) \ge \alpha\,\nu(A)
	\end{equation}
	holds for all Borel sets $A\subset\mathbb{R}^d$.
    \end{itemize} 
	Then $\mathscr{P}$ admits a unique invariant probability measure
	$\pi$. Furthermore, there exist constants $M>0$ and
	$\gamma\in(0,1)$ such that
	\begin{equation}\label{eq:exp_erg}
		\|\mathscr{P}^n f - \pi(f)\|_{1+V}
		\le M\gamma^n\,\|f-\pi(f)\|_{1+V}
	\end{equation}
	holds for all measurable functions $f:\mathbb{R}^d\to\mathbb{R}$
	such that $\|f\|_{1+V}<\infty$.
\end{theorem}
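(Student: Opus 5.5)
This statement is the discrete-time exponential ergodicity result (Harris-type theorem): under the geometric drift condition (a) and the minorisation condition (b) on the sublevel set $C=\{V\le R\}$ with $R>2d(1-\gamma)^{-1}$, we must show that $\mathscr P$ has a unique invariant measure and contracts exponentially in $\|\cdot\|_{1+V}$. The plan is the Hairer--Mattingly weighted-norm argument. Fix $\beta>0$ to be chosen, and set
$$\rho_\beta(\mu_1,\mu_2):=\int (1+\beta V)\,\rmd|\mu_1-\mu_2|.$$
Dually, $\|f\|_\beta:=\sup_x |f(x)|/(1+\beta V(x))$, which is equivalent to $\|\cdot\|_{1+V}$ with constants depending only on $\beta$. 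Equivalently, consider the metric
$$d_\beta(x,y)=\bigl(2+\beta V(x)+\beta V(y)\bigr)\mathbf 1_{x\neq y}$$
and its Lipschitz seminorm $\|f\|_{\mathrm{Lip},\beta}=\sup_{x\ne y}|f(x)-f(y)|/d_\beta(x,y)$. The core claim is that for a suitable $\beta$ there is $\bar\gamma<1$ with $\|\mathscr Pf\|_{\mathrm{Lip},\beta}\le\bar\gamma\|f\|_{\mathrm{Lip},\beta}$.

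To prove this, fix $f$ with $\|f\|_{\mathrm{Lip},\beta}\le1$; after subtracting a constant, $|f(x)|\le 1+\beta V(x)$. For a pair $x\ne y$ we split into two cases.

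\emph{Case 1: $V(x)+V(y)\ge R$.} The drift condition gives
$$|\mathscr Pf(x)-\mathscr Pf(y)|\le 2+\beta\gamma\bigl(V(x)+V(y)\bigr)+2\beta d.$$
Since $R>2d/(1-\gamma)$, we have $2d<(1-\gamma)R\le(1-\gamma)(V(x)+V(y))$. Writing $V(x)+V(y)\ge R$ and choosing $\gamma_0\in(\gamma+2d/R,1)$, the right side is at most $\bar\gamma_1\,(2+\beta V(x)+\beta V(y))$ with $\bar\gamma_1=\max\{\gamma_0,\ (2+\beta\gamma_0 R)/(2+\beta R)\}<1$. (Checking the constant bookkeeping here is routine but needs care.)

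\emph{Case 2: $V(x)+V(y)<R$.} Then $x,y\in C$, so we can write $\mathscr P(x,\cdot)=\alpha\nu+(1-\alpha)\tilde{\mathscr P}(x,\cdot)$, and similarly for $y$. The $\nu$ parts cancel, and the residual kernels satisfy $\tilde{\mathscr P}V\le(\gamma V+d)/(1-\alpha)$. This gives
$$|\mathscr Pf(x)-\mathscr Pf(y)|\le 2(1-\alpha)+\beta\bigl(\gamma R+2d\bigr).$$
Choosing $\beta$ small enough that $\beta(\gamma R+2d)\le\alpha$, the bound is at most $2-\alpha\le(1-\alpha/2)\,d_\beta(x,y)$.

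Taking $\bar\gamma=\max\{\bar\gamma_1,1-\alpha/2\}<1$ yields the contraction.

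With the contraction in hand, the conclusions follow. The dual statement says $\mu\mapsto\mu\mathscr P$ is a strict contraction in $\rho_\beta$ on the set of probability measures with $\int V<\infty$. This space is complete, because it is a closed subset of a weighted total-variation space. Banach's fixed point theorem then gives existence and uniqueness of $\pi$ among measures with $\int V\,\rmd\pi<\infty$. Any invariant measure automatically has $\int V\,\rmd\pi\le d/(1-\gamma)$ by iterating the drift condition (truncate $V$ and use monotone convergence), so uniqueness holds among all invariant probability measures. Finally, iterating the contraction gives, for $\|f\|_{1+V}<\infty$,
$$|\mathscr P^nf(x)-\pi(f)|=\Bigl|\int\bigl(\mathscr P^nf(x)-\mathscr P^nf(y)\bigr)\pi(\rmd y)\Bigr|\le\bar\gamma^n\|f\|_{\mathrm{Lip},\beta}\int d_\beta(x,y)\,\pi(\rmd y).$$
Since $\int V\,\rmd\pi<\infty$, the last integral is bounded by $C(1+V(x))$. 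Together with $\|f\|_{\mathrm{Lip},\beta}\le C\|f-\pi(f)\|_{1+V}$, this gives \eqref{eq:exp_erg}.

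The main obstacle is the choice of $\beta$ balancing the two cases: Case 2 needs $\beta$ small, while Case 1 must still yield a factor strictly less than $1$ for that same $\beta$. This is exactly where the hypothesis $R>2d/(1-\gamma)$ is used.
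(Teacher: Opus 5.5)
The paper does not prove this statement; it quotes it from Hairer--Mattingly \cite{MR2857021}. Your proposal is correct and is essentially their argument: contraction of $\mathscr P$ in the $d_\beta$-Lipschitz seminorm via the split $V(x)+V(y)\ge R$ versus $V(x)+V(y)<R$, followed by duality with $\rho_\beta$, Banach's fixed point theorem, and the a priori bound $\pi(V)\le d/(1-\gamma)$. The deviations are cosmetic: you take $\beta$ small so that $\beta(\gamma R+2d)\le\alpha$, whereas they take $\beta$ proportional to $1/d$ and absorb $\gamma\beta(V(x)+V(y))$ using $\gamma<\bar\gamma$; also, the ``balancing'' you flag is not a real obstacle, because the case $V(x)+V(y)\ge R$ gives a factor below $1$ for every $\beta>0$, and that case is where $R>2d/(1-\gamma)$ is used.
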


\begin{proposition}\label{thm.ergodicity}
       Assume that 	                    the two
     conditions $(H_\sigma^{\alpha})$ and $(H_b^0)$ of Theorem \ref{thm:MPZ} hold and there exists a norm-like function  $V\in C^2(\R^d)$ such that
    $$\mathcal{L}V(x)\le -c_0 V(x)+d_0,\quad \forall x\in \R^d,$$
with some positive constants $c_0$ and $d_0$. Then the SDE admits a unique invariant probability measure $\pi$. Furthermore, there exist constants $C, \lambda>0$ such that
	\begin{equation*}
		|{P}_t f(x) - \pi(f)|
		\le C {\rm e}^{-\lambda t}(1+V(x))\|f\|_{\infty}.
	\end{equation*}
 for each bounded measurable function $f$, for each $x\in \R^d$ and $t\ge0$.
\end{proposition}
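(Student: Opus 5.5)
The plan is to reduce to the discrete-time Harris theorem (Theorem~\ref{thm:Ber2021}), applied to the skeleton chain $\mathscr P:=P_h$ for a fixed step $h>0$. The resulting exponential bound is then transferred to continuous time using the semigroup property.

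\emph{Existence and uniqueness of $\pi$.} Weak well-posedness and the existence of a transition density $p_t(x,y)$ follow from Theorem~\ref{thm:MPZ}. Non-explosion follows from Theorem~\ref{thm.Ber_nonexp}, since $\mathcal LV\le -c_0V+d_0\le c_0V+d_0$. Ellipticity gives the strong Feller property (Remark~\ref{rem.Ber_feller}). Irreducibility comes from the strictly positive Gaussian lower bound in \eqref{eq:two_sided_MPZ}. The Foster condition \eqref{eq:Foster_appendix} holds with $f=1+V$ on a large sublevel set, after rescaling constants: outside the set $\{V\le R\}$, the inequality $-c_0V+d_0\le -\tfrac{c_0}{2}(1+V)$ holds once $R$ is large. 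Theorem~\ref{thm.Ber_Harris} then yields a unique invariant probability measure $\pi$. Uniqueness for $(P_t)$ and uniqueness for the skeleton agree, because any $P_h$-invariant measure can be averaged over $s\in[0,h]$ into a $P_t$-invariant measure.

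\emph{Drift condition for the skeleton.} Apply Itô's formula to $e^{c_0t}V(X_t)$, localized by exit times of balls $\{V\le k\}$ and then passing to the limit with Fatou's lemma; this is legitimate because of non-explosion. The result is $P_tV(x)\le e^{-c_0t}V(x)+d_0/c_0$. This gives \eqref{eq:FL_Ber} with $\gamma=e^{-c_0h}$ and $d=d_0/c_0$.

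\emph{Minorisation.} Here I use the lower bound of \eqref{eq:two_sided_MPZ} at $t=h$. On the compact set $C=\{V\le R\}$, the mollified flow $\theta^{(1)}_h(x)$ stays in a bounded set $K$, since $b^{(1)}$ has linear growth by $(H_b^0)$ and so Grönwall's inequality applies. Consequently, for all $y$ in the unit ball $B$, $p_h(x,y)\ge C_0^{-1}h^{-d/2}\exp(-\lambda_0^{-1}\sup_{z\in K,\,y\in B}|z-y|^2/h)=:c>0$. Take $\nu=\mathrm{Leb}|_B/|B|$ and $\alpha=c|B|$, shrinking $\alpha$ below $1$ if needed. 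Theorem~\ref{thm:Ber2021} then gives $|P_{nh}f(x)-\pi(f)|\le M\gamma^n(1+V(x))\|f-\pi(f)\|_{1+V}\le 2M\gamma^n(1+V(x))\|f\|_\infty$.

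\emph{Continuous time.} For $t=nh+s$ with $s\in[0,h)$, write $P_tf-\pi(f)=P_{nh}(P_sf)-\pi(P_sf)$. This uses the invariance of $\pi$ under $P_s$ and the bound $\|P_sf\|_\infty\le\|f\|_\infty$. The desired estimate follows with $\lambda=c_0$ (since $\gamma^n=e^{-c_0nh}\le e^{c_0h}e^{-c_0t}$), after absorbing constants.

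\emph{Main obstacle.} I expect the minorisation to be the hardest step: one needs the uniform control of $\theta^{(1)}_h$ over $C$ together with the constants of Theorem~\ref{thm:MPZ}. The remaining steps are standard bookkeeping.
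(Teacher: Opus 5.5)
Your proposal follows the paper's proof essentially step by step. Both use Harris' theorem (Theorem~\ref{thm:Ber2021}) for a skeleton chain, a geometric drift bound from $\mathcal LV\le -c_0V+d_0$, minorisation on a compact sublevel set of $V$ from the Gaussian lower bound of Theorem~\ref{thm:MPZ}, Theorem~\ref{thm.Ber_Harris} for the invariant measure of $(P_t)_{t\ge0}$, and the decomposition $t=nh+s$. The local differences are harmless. You bound $\theta^{(1)}_h(C)$ by linear growth and Gr\"onwall rather than by continuity of the flow. Your It\^o argument for $e^{c_0t}V(X_t)$ with localization is, if anything, a cleaner justification of the drift bound than the paper's integral Gr\"onwall step.

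One step is wrong, however: the claim that one may take $\lambda=c_0$. The $\gamma$ in the conclusion \eqref{eq:exp_erg} of Theorem~\ref{thm:Ber2021} is not the drift constant $\gamma=e^{-c_0h}$ from \eqref{eq:FL_Ber}; the statement reuses the letter, and the paper's proof calls it $\bar\gamma$. Harris' theorem only produces some $\bar\gamma\in(0,1)$, which also depends on the minorisation constant and on $R$, and in general $\bar\gamma>e^{-c_0h}$. Indeed $\lambda=c_0$ is false in general. The Ornstein--Uhlenbeck process $\rmd X_t=-X_t\rmd t+\rmd B_t$ on $\R$ with $V(x)=x^2$ satisfies all hypotheses with $\mathcal LV=-2V+1$, so $c_0=2$. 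Yet for $f=\mathbf 1_{(0,\infty)}$ the difference $P_tf(1)-\pi(f)$ is of exact order $e^{-t}$. The repair is to set $\lambda:=-\log(\bar\gamma)/h$, which is all the proposition claims.

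A smaller point concerns your averaging remark. It does not by itself show what you use later, namely that the skeleton's $\pi$ is $P_s$-invariant for every $s$: averaging $\pi$ produces a $P_t$-invariant measure, but not obviously $\pi$ itself. The needed fact follows because the unique $P_t$-invariant measure from Theorem~\ref{thm.Ber_Harris} is $P_h$-invariant, hence equals $\pi$ by uniqueness for the skeleton. Alternatively, argue directly: $\pi P_s$ is $P_h$-invariant, so $\pi P_s=\pi$.
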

\begin{proof}
	We verify the conditions (a) and (b)	of Theorem~\ref{thm:Ber2021} for the skeleton chain $(X_{n\delta})_{n\ge0}$ with some fixed $\delta>0$.
    
	\noindent\textbf{a. Geometric drift condition:}
	
    Using the Dynkin's formula and the assumption that 
	$\mathcal{L}V(x) \le -c_0\,V(x) + d_0$
	for all $x\in\mathbb{R}^d$, we have
	\[
	u(t) := \mathbb{E}^x[V(X_t)]
	= V(x) + \mathbb{E}^x\!\left[\int_0^t\mathcal{L}V(X_s)\,\mathrm{d}s\right]
	\le V(x) + \int_0^t\bigl(-c_0\,u(s)+d_0\bigr)\,\mathrm{d}s.
	\]
	Applying Gronwall's inequality to the resulting integral inequality, we get
	\[
	P_\delta V(x) = u(\delta)
	\le e^{-c_0\delta}V(x)
	+ \frac{d_0}{c_0}(1-e^{-c_0\delta})
	=: \gamma\,V(x) + d,
	\]
	where $\gamma:=e^{-c_0\delta}\in(0,1)$ and
	$d:=\frac{d_0}{c_0}(1-e^{-c_0\delta})<\infty$.
	This is precisely \eqref{eq:FL_Ber}. Hence the condition (a)
	of Theorem~\ref{thm:Ber2021} holds.

	\noindent\textbf{b. Minorisation condition:}
	Fix $R>2d(1-\gamma)^{-1}$ and let
	$C=\{x\in\mathbb{R}^d:V(x)\le R\}$ as in
	the condition (b). Note moreover that
	\[
	\frac{2d}{1-\gamma}
	= \frac{2\cdot\frac{d_0}{c_0}(1-e^{-c_0\delta})}
	{1-e^{-c_0\delta}}
	= \frac{2d_0}{c_0},
	\]
	so the compact set $C=\{x\in\mathbb{R}^d:V(x)\le R\}$ required in
	the condition (b) is well-defined for any fixed
	$R>\frac{2d_0}{c_0}$, independently of $\delta$.
        
	Since the two conditions $(H^\alpha_\sigma)$ and $(H^0_b)$ hold, we apply Theorem~\ref{thm:MPZ} to obtain a lower bound
	for the transition density $p_\delta(x,y)$. 
By Theorem~\ref{thm:MPZ},
	the transition density $p_\delta(x,y)$ satisfies
	\[
	p_\delta(x,y)
	\ge C_0^{-1}\,g_{\lambda_0^{-1}}\!\bigl(\delta,\,
	\theta^{(1)}_\delta(x)-y\bigr)
	\quad\forall\,x,y\in\mathbb{R}^d,
	\]
	where $g_\lambda(t,z)=t^{-d/2}\exp(-\lambda|z|^2/t)$ and
	$\theta^{(1)}_\delta(x)$ is the
	mollified flow defined in \eqref{eq:flow}.
	Let $B_\rho\subset\mathbb{R}^d$ be any open ball of radius $\rho>0$.
	Since $C$ is compact and $x\mapsto \theta_\delta^{(1)}(x)$ is continuous,
	the set $\theta_\delta^{(1)}(C)$ is compact. Therefore the function
	$
	(x,y)\mapsto
C_0^{-1}\,g_{\lambda_0^{-1}}\!\bigl(\delta,\theta_\delta^{(1)}(x)-y\bigr)
	$
	is continuous and strictly positive on the compact set
	$C\times \overline{B_\rho}$. Hence
	$
	\kappa:=\inf_{x\in C,\;y\in B_\rho}
C_0^{-1}\,g_{\lambda_0^{-1}}\!\bigl(\delta,\theta_\delta^{(1)}(x)-y\bigr)
	>0$.
	Choosing $\rho>0$ sufficiently small, we assume that
	$
	\varepsilon_0:=\kappa\,|B_\rho|\in(0,1)$,
	where $|\cdot|$ denotes Lebesgue measure on $\mathbb{R}^d$. Define the probability measure
	$
	\nu(A):=|A\cap B_\rho|/|B_\rho|$,
	for $A\in\mathcal{B}(\mathbb{R}^d).$
	Then, for every $x\in C$ and every Borel set $A$,
	\begin{align*}
		P_\delta(x,A)
		&=
		\int_A p_\delta(x,y)\,\rmd y
		\ge
		\int_{A\cap B_\rho} p_\delta(x,y)\,\rmd y \ge
		\kappa\,|A\cap B_\rho|
		=
		\varepsilon_0\,\nu(A).
	\end{align*}
	This is exactly \eqref{eq:minor_Ber}. Hence the condition (b) of Theorem~\ref{thm:Ber2021} holds.
    \medskip
	
We have thus verified the conditions (a)
	and (b) of Theorem~\ref{thm:Ber2021}. Hence
	the skeleton chain $(X_{n\delta})_{n\ge0}$ admits a unique invariant
	probability measure $\pi$, and there exist constants $M>0$ and
	$\bar\gamma\in(0,1)$ such that
	\begin{align}\label{inq.ergo.V}
	    	\|P_{n\delta}f-\pi(f)\|_{1+V}
	\le
	M\bar\gamma^n\,\|f-\pi(f)\|_{1+V}
		\end{align}
	for all measurable $f$ with $\|f\|_{1+V}<\infty$. Here, $\|\cdot\|_{1+V}$ is defined by \eqref{eq:V_norm}. 

Moreover, since $\mathcal LV(x)\le -c_0V(x)+d_0$, by choosing $R_0>0$ sufficiently large, the Foster--Lyapunov condition \eqref{eq:Foster_appendix} holds with $f=1+V$ and $C_{R_0}:=\{x\in\mathbb R^d:V(x)\le R_0\}$.

The condition $H_{\sigma}^{\alpha}$ gives the ellipticity, which implies the strong Feller property. Moreover, the lower bound on the transition density implies irreducibility. By Theorems~\ref{thm.Ber_Harris}, the SDE admits a unique invariant measure, which must coincide with the invariant measure $\pi$ of $(X_{n\delta})_{n\ge0}$.

For any bounded measurable $f$ and any $x\in\mathbb{R}^d$, by
	the definition of $\|\cdot\|_{1+V}$, we have
	\begin{align}\label{inq.ergo.V2}
	    |P_{n\delta}f(x)-\pi(f)|
	\le
	\|P_{n\delta}f-\pi(f)\|_{1+V}\,(1+V(x)).
	\end{align}
	Combining \eqref{inq.ergo.V} and \eqref{inq.ergo.V2} together with the fact that
	$\|f-\pi(f)\|_{1+V}
	=
	\sup_{x\in\mathbb{R}^d}
	\frac{|f(x)-\pi(f)|}{1+V(x)}
	\le	2\|f\|_\infty,$ we obtain
	\[
	|P_{n\delta}f(x)-\pi(f)|
	\le
	2M\bar\gamma^n\,\|f\|_\infty\,(1+V(x)).
	\]
Finally, for each $t\ge0$ write
	$t=n\delta+r$
	with $n=\lfloor t/\delta\rfloor$ and
	$r\in[0,\delta).$  Using the semigroup property and the invariance of $\pi$, we get
	\begin{align*}
	|P_t f(x)-\pi(f)|
	&=
	|P_{n\delta}(P_r f)(x)-\pi(P_r f)| \le
	2M\bar\gamma^n\,\|P_r f\|_\infty\,(1+V(x)) \le
	2M\bar\gamma^n\,\|f\|_\infty\,(1+V(x)),
	\end{align*}
	where in the last step we used the fact that $\|P_r f\|_\infty\le \|f\|_\infty$.
	Since $n\ge t/\delta-1$, we have
	$\bar\gamma^n\le \bar\gamma^{-1}e^{-\lambda t}$ with
	$\lambda:=-{\log\bar\gamma}/{\delta}>0$.
	Therefore,
	\[
	|P_t f(x)-\pi(f)|
	\le
	Ce^{-\lambda t}(1+V(x))\|f\|_\infty,
	\]
	with $C:=2M\bar\gamma^{-1}$. This completes the proof.
\end{proof}

\end{appendix}

\section*{Acknowledgements}
The authors would like to thank Gunter M. Sch\"utz for his kind comments and for pointing out relevant literature on the elephant random walk. Tuan-Minh Nguyen was partially supported by the Australian Research Council under grant ARC DP230102209.

\bibliographystyle{amsplain}
\bibliography{refs}

\end{document}